\newtheorem{theorem}{Theorem}
\newtheorem{lemma}{Lemma}
\newtheorem{proposition}{Proposition}
\newtheorem{remark}{Remark}
\newtheorem{definition}{Definition}
\newcommand{\R}{\mathbb{R}}%{\mathds{R}}
\renewcommand\P{\operatorname{\mathbf{P}}}
\newcommand\E{\operatorname{\mathbf{E}}}
\newcommand{\X}{\mathbf{X}}
\newcommand{\x}{\mathbf{x}}
\begin{document}

\title{Two sided long-time optimization singular control problems for L\'evy processes and Dynkin's games}

 \author{ Ernesto Mordecki\footnote{Centro de Matem\'atica. Facultad de Ciencias, Universidad de la Rep\'ublica, Montevideo, Uruguay. Corresponding author. email: mordecki@cmat.edu.uy} \and Facundo Oli\'u\footnote{Ingenier\'ia Forestal. Centro Universitario de Tacuaremb\'o, Universidad de la Rep\'ublica, Uruguay. email: facundo.oliu@cut.edu.uy}}

\maketitle

\begin{abstract}
%% Text of abstract
A relationship between two sided discounted  singular control problems and Dynkin games is established for real valued L\'evy processes.
As a consequence, the solution of a two-sided ergodic singular control problem is obtained as the limit of the corresponding solution of the discounted one. 
With these results, we conclude that the optimal controls within the class of c\`adl\`ag strategies can be in fact found 
in the class of reflecting barriers controls for both problems, and solved through some deterministic equations.
To illustrate the results, three examples are given: 
compound Poisson processes with two-sided exponential jumps with and without Gaussian component, and stable processes.  
\end{abstract}

%%Graphical abstract
%\begin{graphicalabstract}
%\includegraphics{grabs}
%\end{graphicalabstract}

%%Research highlights
%\begin{highlights}
%\item Research highlight 1
%\item Research highlight 2
%\end{highlights}

%% Keywords
%% keywords here, in the form: keyword \sep keyword
%93E20 \sep 60H30
%% PACS codes here, in the form: \PACS code \sep code

%% MSC codes here, in the form: \MSC code \sep code
%% or \MSC[2008] code \sep code (2000 is the default)

\vspace{1cm}
\par\noindent
\textbf{Keywords:} Discounted Singular Control ,Ergodic Singular control, L\'evy Processes, Dynkin Games.
\vspace{1cm}
\par\noindent
\textbf{MSC:} 93E20, \  60H30
%\newpage
%\tableofcontents

%% Add \usepackage{lineno} before \begin{document} and uncomment 
%% following line to enable line numbers
%% \linenumbers

%% main text
%%

%% Use \section commands to start a section
%%%%%%%%%%%%%%%%%%%%%%%%%%%%%%%%%%%%%%%%%%%%%%
%%%% Main text entry area:

\section{Introduction and main results}
Singular stochastic control problems offer a versatile framework for addressing various economic and financial decision-making problems. These problems involve optimizing decisions in presence of uncertainty, with applications ranging from harvesting policies to cash flow management.
One instance of a singular stochastic control problem is maximizing the expected cumulative present value of harvesting yield when the harvesting effort is unbounded. 
This specific problem has been extensively studied and analyzed in the literature 
(see for example \cite{LESE}, \cite{LO},  \cite{AS}  and \cite{HNUW}).
Similarly, studies focusing on cash flow management explore optimal dividend distribution, recapitalization, or a combination of both while considering risk neutrality. These investigations also fall under the domain of singular control problems (see \cite{SLG}, \cite{JS},
\cite{AT}, \cite{HT}, \cite{BK} and \cite{P}).

In some cases where the problem is one-sided, 
a link between optimal control and optimal stopping problems has been established.
For instance, in \cite{KARATZAS} the underlying process is a Brownian motion, 
or more recently, in \cite{NY,SEXTON} L\'evy processes are considered.
In the articles \cite{BAURDOUX,KEIYAMAZAKI} two sided controlled problems for L\'evy processes with one-sided jumps are considered.
For the case where the system is controlled by a process of bounded variation and the underlying process is an It\^{o} diffusion, verification theorems in the form of Hamilton Jacobi Bellman (HJB) equations that define a free boundary problem have been proposed and solved explicitly (see for example \cite{FV,KKXYZ,CMO}). 
In these cases, the optimal control problem reduces to the question of finding two barriers such that the process is reflected when it reaches them.
This approach does not seem to work in the case of L\'evy processes, 
when the free boundary problem has an integro-differential equation that cannot be solved explicitly.
%
%In this case, the questions of existence of optimal controls and whether the optimal control consists of reflecting barriers have not been answered for either the ergodic or the discounted problem, except for some particular processes
%(see \cite{TZ}).
In general when there is an underlying L\'evy process, it does not seem clear how to prove directly that the HJB has a solution with the needed regularity. 
With this problem in mind, we use a similar approach as the one in \cite{KW,GT0,GT}, and establish a link between the discounted problem and an auxiliary Dynkin game in such a manner that the solution of the Dynkin game exists and defines two thresholds that happens to be the optimal reflecting barriers for the long time optimization problem. 
Furthermore we prove that the abelian limit holds defining an ergodic problem similar to the ones in \cite{AZ} and \cite{Alvarez}. 
For more references where this relationship is used see \cite{YuriKifer,TG,TF2,DF,SFR}. 

The rest of the paper organized as follows. 
In Section \ref{S:Verification} verification results that give sufficient conditions for strategies to be optimal are provided for both the ergodic and the discounted problems. It should be noted that our results allow us to work with a subclass of non-smooth function. 
The main contribution of the paper is presented in Sections \ref{S:DynkinAdjoint} and \ref{S:Candidate},
where we show that the solution of the discounted problem is given by the solution of a Dynkin game in the sense that the continuation region of the Dynkin game is an interval whose extremes define a two sided reflecting optimal strategy for the discounted problem. 
(See \cite{AAGP} and \cite{KRLS} for reflected L\'evy processes.)
This game is similar to the one proposed in \cite{Lukasz Stettner}. Furthermore, in Section \ref{S:Candidate}, we prove that the abelian limit holds. This
means that, when the discount rate $\epsilon$ decreases to zero,
the normalized expected reward associated to the $\epsilon$-discounted problem converges to a constant for each starting point, 
the latter constant being actually the  value of the ergodic problem.
Moreover, we prove that the barriers of the optimal controls have a convergent subsequence whose limit defines optimal barriers for the ergodic problem. 
Instead of using some criterion of continuity in the solutions like in \cite{HDF}, 
we make use of our verification theorems in such a way that they relate both problems.

Finally, Section \ref{S:Examples} presents three examples, the first two when the driving L\'evy process are Compound Poisson process with two sided exponential jumps with and without Gaussian component, the third involving a strictly stable process with finite mean.
The results obtained in these examples are possible, because the two-barrier problem for these processes can be solved explicitly (see \cite[pp. 216-218]{B} and \cite{CCW}).

\subsection{L\'evy processes, strategies and cost functions}\label{S:framework}

Let $X=\{X_t\}_{t\geq 0}$ be a L\'evy process with finite mean defined on a stochastic basis 
${\cal B}=(\Omega, {\cal F}, {\bf F}=({\cal F}_t)_{t\geq 0}, \P_x)$ departing from $X_0=x$.
Assume that the filtration is right-continuous and complete (see \cite[Def. 1.3]{JJAS}).
Denote by $\E_x$ the expected value associated to the probability measure $\P_x$, let $\E= \E_0$ and $\P=\P_0$.   
The L\'evy-Khintchine formula characterizes the law of the process,
stating
$$
\phi (z)= \log \left(  \E( e^{z X_1}) \right), \qquad z=i\theta \in i\mathbb{R},
$$
with
\begin{equation*}
\phi(z)={\nu^2\over 2}z^2+z \mu+\int_{\R}\left(e^{z y}-1-z y\right)\Pi(dy),
\end{equation*}
where $\mu=\E(X_1)\in\R$, $\nu\geq 0$ and $\Pi(dy)$ is a non-negative measure (the \emph{jump measure}) 
that satisfies in our case, due to the assumption that our process has finite mean, $\int_{\R}(y^2\wedge |y|)\Pi(dy)<\infty$. 
%Here $h(y)=y\indicator_{\{|y|<1\}}$ is a truncation function.\\
This L\'evy process, being a special semimartingale (see \cite[II.2.29]{JJAS}), 
can be expressed as a sum of three independent processes 
\begin{equation}\label{D:LEVYPROCESSEQUATION}
X_t= X_0 +\mu t +\nu W_t + \int_{[0,t]\times\R} y\, \tilde N(ds,dy),
\end{equation}
where $\tilde N(ds,dy)=N(ds,dy)-ds\Pi(dy)$ is a compensated Poisson random measure, 
$N(ds,dy)$ being the jump measure constructed from $X$ (see \cite[II.1.16]{JJAS}),
and $\lbrace W_t \rbrace_{t \geq 0}$ is an independent Brownian Motion. \\
In the case when $X$ has bounded variation, we denote $\lbrace S_t^+ \rbrace_{t \geq 0}, \ \lbrace S_t^- \rbrace_{t \geq 0}$ the couple of independent subordinators starting from zero such that for all $t \geq 0$
$$ 
X_t=x +S_t^+ -S_t^-.
$$
The infinitesimal generator of the process $X$ is 
\[
\mathcal{L}f (x)=\lim_{t \to 0} \frac{\displaystyle \E_x f(X_t)-f(x)  }{\displaystyle t} , \]
defined for every real function such that the limit exists for every $x \in \mathbb{R}$. 
We say that such a function is in the domain of the infinitesimal generator. For general references on L\'evy processes see \cite{B,KRI,KIS}.
%
%For integrals of semimartingales, we use the notation
%\[
%Y_t \bigcdot  Z_t \colon = \int_0^t Y_s dZ_s,\quad\text{$t\geq 0$}. 
%\]
%for every  $t \geq 0, \ \lbrace Y_s \rbrace , \ \lbrace Z_s \rbrace$ semimartingales.
\begin{definition}\label{D:Admissiblestrategies} 
An \emph{admissible control} is a pair of non-negative $\mathbf{F}$-adapted processes $(U,D)$ such that:
\vskip1mm\par\noindent
{\rm(i)} Each process $U,D\colon\Omega \times \mathbb{R}_+ \rightarrow \mathbb{R}_+$ is right continuous and nondecreasing almost surely.
\vskip1mm\par\noindent
{\rm(ii)} For each $t\geq 0$ the random variables $U_t$ and $D_t$ have finite expectation.
\vskip1mm\par\noindent
We denote by $\mathcal{A}$ the set of admissible controls.
\end{definition}
A controlled L\'evy process by the pair $(U,D) \in \mathcal{A}$ is be defined as
\begin{equation}\label{D:controlledequation}
X^{U,D}_t= X_t +U_t-D_t, \qquad X_0=x,\ U_{0}=u_0,  \ D_{0}=d_0.
\end{equation}
For $a<b$ let $\lbrace X^{a,b}_t\colon \ t \geq 0 \rbrace$ be a process defined on $(
\Omega,\mathcal{F}, {\bf F},\P_x)$ that follows \eqref{D:controlledequation} where 
$ U^{a,b}_t,-D^{a,b}_t$ are the respective reflections at $a$ and $b$,  called \emph{reflecting controls}.
At time zero, the reflecting barriers are defined as $U^{a,b}_0=(a-x)^+$ and $D^{a,b}_0=(x-b)^+$ (where $x^+= \max(x,0)  $) and from now on are denoted $u_0^{a,b}$ and $d_0^{a,b}$ respectively. Moreover $ U^{a,b}, \ D^{a,b}$ satisfy

\begin{equation}\label{D:skorhod}
     \int_{(0,\infty)} (X^{a,b}_t -a)dU^{a,b}_t=0, \quad   \int_{(0,\infty)} (b- X^{a,b}_t )dD^{a,b}_t=0 . 
     \end{equation}
There is an unique strong solution that satisfies \eqref{D:controlledequation} and is also a solution of the Skorokhod \eqref{D:skorhod} problem (see \cite{AAGP}). We remark that for every exponential random variable $e(\epsilon)$ with parameter $\epsilon>0$ independent of the process $X$ and every $t>0$ the random variables $U^{a
,b}_{e(\epsilon)} \ U^{a,b}_t, \ D^{a,b}_{e(\epsilon)}, \  D^{a,b}_t, $ have finite mean as a consequence of \cite[Thm. 6.3]{AAGP}, (more specifically see Proposition \ref{P:notdegenerate}). When the process has bounded variation, we also define 
$$(U^{0,0}_0,D^{0,0}_0)= (-\min \lbrace x,0 \rbrace , \max \lbrace x,0 \rbrace), \quad (U^{0,0}_t,D^{0,0}_t)=(S^-_t,S^+_t ), \text{ for } t>0,  $$
as a reflecting strategy. Observe that formula \eqref{D:controlledequation}, in this case, holds with $X_t^{U^{0,0},D^{0,0}}=0$, for all $t \geq 0$.
%of bounded variation, we will need to use the strategies with $a=b=0$. We then define
%$$D^{0,0}_t  := \limsup_{(a,b) \to (0,0)} D^{a.b}_t, \qquad U^{0,0}_t  :=\limsup_{(a,b) \to (0,0)} U^{a.b}_t , \text{ for every } t \geq 0 .$$

From now on $q_u, q_d $ are positive constants, 
and we refer to them as lower barrier cost and upper barrier cost respectively. Also, denote $q=q_u+q_d$.
\begin{definition}\label{D:costfunction}
A cost function is a convex non negative function  
$c\colon \mathbb{R} \rightarrow \mathbb{R}_+$  
such that
\vskip1mm\par\noindent
{\rm(i)}  reaches its minimum at zero, 
\vskip1mm\par\noindent
{\rm(ii)} There are three constants $\alpha\geq 0, M \geq 0$ and $N>0$ that satisfy 
\begin{equation*}
 c(x) +M \geq   N\vert x \vert^{1+\alpha},  \qquad \text{for all $x \in \mathbb{R}$},
\end{equation*}
\vskip1mm\par\noindent
{\rm(iii)} for every $\delta>0$ there is a convex function 
$c_\delta \in C^2 (\mathbb{R})$ 
with minimum at zero such that 
$\| c- c_{\delta} \|_{\infty} < \delta$ and for every $\epsilon>0, \ x \in \mathbb{R}$
\begin{equation}\label{eq:cprima}   
\E_x \int_0^{\infty} \vert c_{\delta}'(X_s)  \vert e^{-\epsilon s} ds  < \infty . 
\end{equation}
\end{definition}

\begin{remark}
To verify \eqref{eq:cprima}, if for every $\delta>0$ there is a function $f_{\delta}$ such that 
$\vert c'_{\delta}(x) \vert =f_{\delta}(\vert x \vert)$ and a  constant $K_{\delta}>0$ such that 
$f_{\delta}(x+y) \leq K_{\delta} f_{\delta}(x) f_{\delta}(y)$, 
according to \cite[Thm. 25.3, Lem. 25.5 and Thm. 30.10]{KIS}, 
it is enough to check
$$
\int_{(-1,1)^c}f_{\delta}( \vert x\vert ) \Pi(dx) <\infty.
$$
\end{remark}
%%%%%%%%%%%%%%%%%%%%%%%%%%%%%%%%%%%%%%%%%%%%%%%%
%%%%%%%%%%%%%%%%%%%%%%%%%%%%%%%%%%%%%%%%%%%%%%%%
%%%%%%%%%%%%%%%%%%%%%%%%%%%%%%%%%%%%%%%%%%%%%%%%
\subsection{The ergodic and discounted control problems}\label{S:theproblem}
%%%%%%%%%%%%%%%%%%%%%%%%%%%%%%%%%%%%%%%%%%%%%%%%
%%%%%%%%%%%%%%%%%%%%%%%%%%%%%%%%%%%%%%%%%%%%%%%%
%%%%%%%%%%%%%%%%%%%%%%%%%%%%%%%%%%%%%%%%%%%%%%%%
\begin{definition}\label{D:ergodicvaluefunction}
Given $x\in\R$ and a control $(U,D)\in\mathcal{A}$, we define the ergodic cost function 
\[
J(x,U,D) = \limsup_{T \to \infty} \frac{1}{T} \E_x\left(\int_{(0,T]}\left(c(X^{U,D}_s)ds +q_u U_T +q_d D_T\right)  \right), 
\]
and the ergodic value function 
\[
G(x) = \inf_{(U,D) \in \mathcal{A}} J(x,U,D).
\]
\end{definition}

\begin{definition}\label{D:discountedgame}
Given $x\in\R$, a control $(U,D)\in\mathcal{A}$ and a fixed $\epsilon>0$, 
we define the $\epsilon$-discounted cost function 
\[
J_{\epsilon}(x,U,D)  =  \E_x\left(\int_{(0,\infty)}e^{-\epsilon s}\left(c(X_s^{U,D})ds + q_udU_s +q_d dD_s\right)+q_uu_0+q_dd_0\right), 
\]
and the $\epsilon$-discounted value function
\[
G_{\epsilon}(x)= \inf_{(U,D) \in \mathcal{A}} J_{\epsilon}(x,U,D).
\]

\end{definition}  

\subsection{Main results}
 The most important results of the article are the link between the discounted problem and a Dynkin game, the optimality of reflecting strategies for the discounted problem  (see Theorem \ref{T:DISCOUNTEDPROBLEMSOLUTION}) and the abelian limits which give an optimal reflecting strategy for the ergodic problem (see Theorem \ref{T:ErgodicProblemsolution}). 
 As it will be seen in the following theorems, the case $\alpha=0$ only has relevance in the ergodic case.

\begin{theorem}\label{T:DISCOUNTEDPROBLEMSOLUTION}
Under the same notations as Def. \ref{D:discountedgame}, 
if $\epsilon$ is small enough in case $\alpha=0$, or if $\alpha>0$,
there is a pair $a^{\ast}_{\epsilon}\leq 0\leq b^{\ast}_{\epsilon}$ such that $G_{\epsilon}(x)=J_{\epsilon}(x,U^{a^{\ast}_{\epsilon},b^{\ast}_{\epsilon}},D^{a^{\ast}_{\epsilon},b^{\ast}_{\epsilon}})$. Moreover if $c \in C^2(\mathbb{R})$, then $a^{\ast}_{\epsilon}<0<b^{\ast}_{\epsilon}$, $G_{\epsilon} \in C^2(\mathbb{R})$ and the function 
\begin{multline*}
    V_{\epsilon}(x)   = \sup_{a \leq 0} \inf_{b \geq 0}\E_x \left(\int_0^{\tau(a) \wedge \sigma(b)} c'(X_s)e^{-\epsilon s}ds  \right. \\+q_d e^{-\epsilon \tau(a)} \mathbf{1}_{ \lbrace\tau(a) \leq \sigma(b)\rbrace } -q_ue^{-\epsilon \sigma(b)} \mathbf{1}_{\lbrace\sigma(b) <\tau(a)\rbrace}  \Bigg) ,
    \end{multline*}
with
\begin{equation}\label{D:tauandsigma}
    \tau(a)=\inf\{t\geq 0\colon X_t\leq a\},
\quad
\sigma(b)=\inf\{t\geq 0\colon X_t\geq b\}, 
\end{equation}
satisfies $V_{\epsilon}=G_{\epsilon}'$.
Furthermore $ a^{\ast}_{\epsilon},b^{\ast}_{\epsilon}$   satisfy:
\begin{equation*}
a^{\ast}_{\epsilon}=\sup \{x<0, V_{\epsilon}(x)=-q_u \},\ 
b^{\ast}_{\epsilon}=\inf \{x>0, V_{\epsilon}(x)=q_d \}.    
\end{equation*}
\end{theorem}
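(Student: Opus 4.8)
The plan is to build the candidate value function by integrating the Dynkin game value, and then to close the argument with the verification results of Section~\ref{S:Verification}. Concretely, I would first analyse the game defining $V_\epsilon$ as an object in its own right, then produce a function $G$ with $G'=V_\epsilon$ that satisfies the Hamilton--Jacobi--Bellman (HJB) variational inequalities of the discounted singular control problem, and finally invoke verification to identify $G$ with $G_\epsilon$ and the reflecting strategy at the game's thresholds as optimal. Throughout, the guiding principle is that differentiating the singular-control HJB system turns it into the obstacle system of a Dynkin game, a manoeuvre that is legitimate here because the generator $\mathcal{L}$ of a L\'evy process is translation invariant and hence commutes with $d/dx$.

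First I would establish existence of a saddle point for the game. Since $c$ is convex with minimum at $0$, its derivative $c'$ is nondecreasing with $c'(x)\le 0$ for $x<0$ and $c'(x)\ge 0$ for $x>0$; this monotonicity, together with the spatial homogeneity of $X$ and the boundedness of the terminal payoffs by $q_u,q_d$, is what forces the game to be of threshold type. Using the theory of optimal stopping games for L\'evy processes (in the spirit of the game in \cite{Lukasz Stettner}) I would show that $V_\epsilon$ is well defined, continuous and nondecreasing, that the game has a value $\sup_{a}\inf_{b}=\inf_{b}\sup_{a}$, and that the saddle is attained at hitting times $\tau(a^\ast_\epsilon),\sigma(b^\ast_\epsilon)$. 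Monotonicity of $V_\epsilon$ then makes the two obstacle sets half-lines, giving $V_\epsilon\equiv -q_u$ on $(-\infty,a^\ast_\epsilon]$ and $V_\epsilon\equiv q_d$ on $[b^\ast_\epsilon,\infty)$ with $a^\ast_\epsilon\le 0\le b^\ast_\epsilon$ since $c'$ changes sign at $0$; this is exactly the characterization $a^\ast_\epsilon=\sup\{x<0:V_\epsilon(x)=-q_u\}$, $b^\ast_\epsilon=\inf\{x>0:V_\epsilon(x)=q_d\}$. Finiteness of the game, and later of $G_\epsilon$, is where the dichotomy enters: the growth bound $c(x)+M\ge N|x|^{1+\alpha}$, property~(iii) of Definition~\ref{D:costfunction} and the moment estimates of Proposition~\ref{P:notdegenerate} guarantee convergence of the relevant integrals, provided $\epsilon$ is small when $\alpha=0$.

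Next I would pass from $V_\epsilon$ to a candidate $G$. On $(a^\ast_\epsilon,b^\ast_\epsilon)$ the game value solves $(\mathcal{L}-\epsilon)V_\epsilon+c'=0$, while the obstacle inequalities hold on the two stopping half-lines. Setting $G(x)=G(0)+\int_0^x V_\epsilon(y)\,dy$ and commuting $\mathcal{L}$ with $d/dx$, I would check that $G$ satisfies the singular-control HJB system: $-q_u\le G'\le q_d$ on $\R$, $(\mathcal{L}-\epsilon)G+c\ge 0$ on $\R$ with equality on $(a^\ast_\epsilon,b^\ast_\epsilon)$, and $G'=-q_u$, $G'=q_d$ on the lower and upper control regions respectively. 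The complementary slackness encoded in the Skorokhod conditions \eqref{D:skorhod} matches this system exactly, since the reflecting control increases $U^{a^\ast_\epsilon,b^\ast_\epsilon}$ (resp. $D^{a^\ast_\epsilon,b^\ast_\epsilon}$) only where $G'=-q_u$ (resp. $G'=q_d$).

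Finally I would feed $G$ and $(U^{a^\ast_\epsilon,b^\ast_\epsilon},D^{a^\ast_\epsilon,b^\ast_\epsilon})$ into the verification results of Section~\ref{S:Verification}, obtaining $G=G_\epsilon$ and $G_\epsilon(x)=J_\epsilon(x,U^{a^\ast_\epsilon,b^\ast_\epsilon},D^{a^\ast_\epsilon,b^\ast_\epsilon})$; since $G'=V_\epsilon$ by construction this gives $V_\epsilon=G_\epsilon'$. For a general convex $c$ I would first run this programme for the smooth approximants $c_\delta\in C^2$ of Definition~\ref{D:costfunction}(iii) and pass to the limit using $\|c-c_\delta\|_\infty<\delta$ and stability of the value functions, which yields the existence of $a^\ast_\epsilon\le 0\le b^\ast_\epsilon$ and optimality of the reflecting strategy in general; the sharper conclusions ($G_\epsilon\in C^2$, $a^\ast_\epsilon<0<b^\ast_\epsilon$, $V_\epsilon=G_\epsilon'$) are then obtained directly when $c\in C^2$, where smooth fit at the free boundaries upgrades $V_\epsilon$ to $C^1$ and hence $G$ to $C^2$, and strictness follows from a local comparison at the origin using $c'(0)=0$ to exclude $0$ being a boundary point. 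The main obstacle I anticipate is this last step combined with the approximation: verifying that the integrated game value genuinely solves the nonlocal HJB system with enough regularity to apply the verification theorem, justifying the commutation of the integro-differential operator $\mathcal{L}$ with differentiation, and controlling the transversality and integrability terms uniformly so that the limit $\delta\to 0$ preserves optimality.
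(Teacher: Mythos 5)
Your overall architecture coincides with the paper's: analyse the $\epsilon$-Dynkin game, show its stopping regions are half-lines with thresholds $a^{\ast}_{\epsilon}\leq 0\leq b^{\ast}_{\epsilon}$, integrate the game value to build a candidate, apply the verification theorems of Section \ref{S:Verification}, and treat general convex $c$ by the approximants $c_{\delta}$. However, the two points you defer as ``anticipated obstacles'' are exactly where the mathematical content of the paper's proof lies, and your plan does not contain the ideas needed to get through them. The first gap is the passage from $V_{\epsilon}$ to the candidate: you assert that $(\mathcal{L}-\epsilon)V_{\epsilon}+c'=0$ holds on $(a^{\ast}_{\epsilon},b^{\ast}_{\epsilon})$ and then commute $\mathcal{L}$ with $d/dx$. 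That equation has no classical meaning at the regularity actually available: the paper proves only that $V_{\epsilon}$ is Lipschitz (Proposition \ref{P:DynkinLipschitz}) and, for unbounded variation processes, $C^1$ (Proposition \ref{L:DynkinC1indense}), while Proposition \ref{P:technicalinfinitesimal} requires $C^2$ (unbounded variation) or $C^1$ (bounded variation) to apply $\mathcal{L}$; so $V_{\epsilon}$ falls one derivative short in both cases, and smooth fit cannot be invoked to close this (it may fail for L\'evy processes, and the paper's $C^1$ result comes from regularity of $0$ for the half-line, not from smooth fit). The paper's key device is to never write an equation for $V_{\epsilon}$ at all: it works one derivative up, with the primitive $W(x)=\int_{a^{\ast}_{\epsilon}}^{x}V_{\epsilon}(y)dy$ (Proposition \ref{P:DomainInfinitesimalgenerator}), and proves in Lemma \ref{L:HarmonicityPrimitiveDynkin} --- via Dynkin's small-exit-time characterization of $\mathcal{L}$ combined with the sub/super/martingale properties \eqref{first}--\eqref{martingale} of the game value --- that $x\mapsto\mathcal{L}W(x)+c(x)-\epsilon W(x)$ is constant on $(a^{\ast}_{\epsilon},b^{\ast}_{\epsilon})$, decreasing on $(-\infty,a^{\ast}_{\epsilon})$ and increasing on $(b^{\ast}_{\epsilon},\infty)$; adding the constant $\epsilon^{-1}(\mathcal{L}W(a^{\ast}_{\epsilon})+c(a^{\ast}_{\epsilon}))$ then produces a candidate $u$ satisfying \eqref{eq:conditions} and \eqref{eq:freeboundary}, to which Theorems \ref{T:Verificationdiscounted} and \ref{T:VerificationsBarriersOptimalDISCOUNT} apply. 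Without this substitute for the commutation step, your verification argument cannot start.

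The second gap is the limit $\delta\to 0$. Uniform convergence of the value functions is indeed immediate from $\|c-c_{\delta}\|_{\infty}<\delta$, but the theorem asserts more: that the reflecting strategy at the \emph{limiting} thresholds attains $G_{\epsilon}$. For this the paper needs three ingredients absent from your plan: (i) a threshold bound uniform in $\delta$, obtained by showing the estimate \eqref{E:boundDynkin3} holds uniformly on $(0,\bar\delta]$ so that Proposition \ref{P:BOUNDSAB} gives a single compact $[-L,L]$ containing all $(a^{\ast}_{\epsilon,\delta},b^{\ast}_{\epsilon,\delta})$; (ii) continuity of the discounted cost of a reflecting strategy as a function of its barriers, which is the content of Proposition \ref{P:reflectingcontinuity} and Lemmas \ref{L:reflectingcontinuity}, \ref{L:reflectingunboundedinfinity}, \ref{L:reflectingboundedlimit}, resting on the explicit identity from \cite{AAGP} and the Lipschitz property of the double Skorokhod map from \cite{KRLS}; and (iii) in the unbounded variation case, a contradiction argument excluding $b^{\ast}_{\epsilon,\delta}-a^{\ast}_{\epsilon,\delta}\to 0$: if the barriers merged, the reflection cost would blow up (Lemma \ref{L:reflectingunboundedinfinity}) while the values $G^{\delta}_{\epsilon}$ remain uniformly bounded. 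Point (iii) is not a technicality --- without it the limiting strategy could be degenerate and the whole approximation argument collapses. So while your route is the paper's route in outline, the proposal as written has genuine gaps at both of its load-bearing steps.
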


\begin{theorem}\label{T:ErgodicProblemsolution}
The ergodic value function $G$ of Definition \ref{D:ergodicvaluefunction} satisfies:
\vskip1mm\par\noindent
{\rm(i)} 
$\lim_{\epsilon  \searrow 0}\epsilon G_{\epsilon}  = G$ uniformly in compacts.
\vskip1mm\par\noindent
{\rm(ii)}
The ergodic value function is constant.
\vskip1mm\par\noindent
{\rm(iii)} There is a pair $a^{\ast}\leq 0\leq b^{\ast}$ such that $G(x)=J(x,U^{a^{\ast},b^{\ast}},D^{a^{\ast},b^{\ast}}), \ $  for all $x\in \mathbb{R}$.
\end{theorem}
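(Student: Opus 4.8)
The plan is to derive all three statements from the discounted solution of Theorem \ref{T:DISCOUNTEDPROBLEMSOLUTION} by an abelian-limit argument, using the verification results of Section \ref{S:Verification} to identify the limit. The first observation I would record is that the derivative is uniformly bounded: since $V_\epsilon=G_\epsilon'$ equals $-q_u$ on $(-\infty,a^{\ast}_\epsilon]$, equals $q_d$ on $[b^{\ast}_\epsilon,\infty)$, and $G_\epsilon$ is convex, one has $-q_u\le G_\epsilon'\le q_d$ for every $\epsilon$. Hence the family $\{G_\epsilon(\cdot)-G_\epsilon(0)\}$ is equi-Lipschitz with constant $q$, so that
\[
\epsilon\,|G_\epsilon(x)-G_\epsilon(0)|\le \epsilon\, q\,|x|\xrightarrow[\epsilon\searrow 0]{}0
\]
uniformly on compacts. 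This reduces everything to the numerical sequence $\epsilon G_\epsilon(0)$: if $\epsilon G_\epsilon(0)\to\lambda$, then $\epsilon G_\epsilon\to\lambda$ uniformly on compacts, which gives statement (i) and the constancy in statement (ii) simultaneously.

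Next I would bound $\epsilon G_\epsilon(0)$ and extract a limit. The lower bound $\epsilon G_\epsilon(0)\ge 0$ is immediate since $c\ge 0$ and $q_u,q_d>0$. For an upper bound, fix any reflecting strategy $(U^{a,b},D^{a,b})$ with $a<0<b$; the reflected process is positive recurrent with a stationary law, so by an abelian theorem for a fixed recurrent strategy $\epsilon J_\epsilon(0,U^{a,b},D^{a,b})\to J(0,U^{a,b},D^{a,b})<\infty$, the finiteness of the reflection controls being guaranteed by Proposition \ref{P:notdegenerate} and \cite[Thm. 6.3]{AAGP}. Since $G_\epsilon(0)\le J_\epsilon(0,U^{a,b},D^{a,b})$, the sequence $\epsilon G_\epsilon(0)$ is bounded, and along a subsequence $\epsilon_n\searrow 0$ we have $\epsilon_n G_{\epsilon_n}(0)\to\lambda\ge 0$. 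By equi-Lipschitzness and local boundedness of $G_{\epsilon_n}-G_{\epsilon_n}(0)$, Arzel\`a--Ascoli yields (after a further subsequence) a convex limit $w$ with $w(0)=0$ and $-q_u\le w'\le q_d$ uniformly on compacts, together with convergent barriers $a^{\ast}_{\epsilon_n}\to a^{\ast}\le 0\le b^{\ast}\leftarrow b^{\ast}_{\epsilon_n}$; their boundedness is forced by the coercivity condition (ii) of Definition \ref{D:costfunction}, since barriers escaping to $\pm\infty$ would make the stationary cost, hence $\epsilon G_\epsilon(0)$, blow up, contradicting the bound just obtained.

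Finally I would pass to the limit in the discounted HJB equation on the continuation region,
\[
\mathcal{L}G_{\epsilon_n}(x)+c(x)=\epsilon_n G_{\epsilon_n}(x),\qquad a^{\ast}_{\epsilon_n}<x<b^{\ast}_{\epsilon_n}.
\]
Because $\mathcal{L}$ annihilates constants, the left side equals $\mathcal{L}(G_{\epsilon_n}-G_{\epsilon_n}(0))+c$, while the right side equals $\epsilon_n(G_{\epsilon_n}(x)-G_{\epsilon_n}(0))+\epsilon_n G_{\epsilon_n}(0)\to 0+\lambda$. Dominating the nonlocal part of $\mathcal{L}$ with the uniform bound $|w'|\le q$ and $\int_{\R}(y^2\wedge|y|)\Pi(dy)<\infty$, one obtains in the limit the ergodic HJB equation
\[
\mathcal{L}w(x)+c(x)=\lambda,\qquad a^{\ast}<x<b^{\ast},
\]
with $w'=-q_u$ on $(-\infty,a^{\ast}]$ and $w'=q_d$ on $[b^{\ast},\infty)$. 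The pair $(w,\lambda)$ then meets the hypotheses of the ergodic verification theorem of Section \ref{S:Verification}, which gives $\lambda\le J(x,U,D)$ for every admissible $(U,D)$ and every $x$, while the reflecting strategy attains it, $J(x,U^{a^{\ast},b^{\ast}},D^{a^{\ast},b^{\ast}})=\lambda$. Hence $G(x)=\lambda$ for all $x$, proving (ii) and (iii); and since $\lambda$ equals the intrinsic ergodic value, it is independent of the chosen subsequence, so $\epsilon G_\epsilon\to\lambda=G$ along the full family, establishing (i).

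The main obstacle is this limiting procedure. The delicate points are controlling the integro-differential operator $\mathcal{L}$ under the limit — the Brownian part and the compensated small-jump part must be handled through the gradient bound $|w'|\le q$ and the finite-mean integrability of $\Pi$, and one must confirm that $w$ inherits the regularity required for $\mathcal{L}w$ to be meaningful in the class allowed by the verification theorem — and securing the boundedness of the free boundaries $a^{\ast}_\epsilon,b^{\ast}_\epsilon$, where the dichotomy between $\alpha>0$ and $\alpha=0$ enters, since only the coercivity of condition (ii) in Definition \ref{D:costfunction} prevents the continuation region from exploding as $\epsilon\searrow 0$.
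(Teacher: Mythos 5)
Your opening reduction is sound and matches the paper's proof of (ii): the bound $-q_u\le G_\epsilon'\le q_d$ gives equi-Lipschitzness, so everything hinges on the scalar quantity $\epsilon G_\epsilon(0)$ and on identifying its limit. The genuine gap is in your third step, passing to the limit in the HJB equation $\mathcal{L}G_{\epsilon_n}+c=\epsilon_n G_{\epsilon_n}$ to obtain a classical ergodic equation $\mathcal{L}w+c=\lambda$ on $(a^\ast,b^\ast)$ and then feeding $(w,\lambda)$ into a verification theorem. The verification results of Section \ref{S:Verification} require $u\in C^2(\mathbb{R})$ when $X$ has unbounded variation (and $u\in C^1(\mathbb{R})$ otherwise), precisely because It\^o's formula is applied classically. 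But your compactness argument only controls $G_{\epsilon_n}-G_{\epsilon_n}(0)$ through the gradient bound; Arzel\`a--Ascoli yields locally uniform convergence of the functions (at best of the derivatives, using the uniform Lipschitz estimate for $V_{\epsilon_n}$ from Proposition \ref{P:DynkinLipschitz} together with Proposition \ref{P:BOUNDSAB}), and gives \emph{no} control on second derivatives. Hence neither is $\mathcal{L}w$ classically meaningful in the Gaussian case, nor is the limit equation justified: the term $\frac{\nu^2}{2}G_{\epsilon_n}''$ and the compensated jump integral do not pass to the limit from $C^0$ (or $C^{1,1}$) convergence. This is exactly the obstruction the paper flags in its introduction ("it does not seem clear how to prove directly that the HJB has a solution with the needed regularity") and is the reason its proof takes a different route. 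A second concrete defect: your argument silently assumes $a^\ast<b^\ast$, but the barriers can collapse in the limit --- this genuinely happens (in Example \ref{ExamplePoisson}, $q=0.1$ gives $a^\ast=d^\ast=0$) --- and then your "ergodic HJB on $(a^\ast,b^\ast)$" is an equation on the empty set. Finally, note that no theorem in Section \ref{S:Verification} accepts a pair $(w,\lambda)$ with $\mathcal{L}w+c\ge\lambda$; Theorem \ref{T:Verification} is stated for a fixed $\epsilon>0$ with $\mathcal{L}u-\epsilon u+c_\delta\ge 0$, so you would also need to formulate and prove a new verification result before your final step could even be invoked.

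The paper avoids all of this by never constructing a limit function $w$. For the upper and lower bounds on $G(x)$ it applies Theorems \ref{T:Verification} and \ref{T:VerificationsBarriersOptimal} to $G_\epsilon$ \emph{itself} (which is legitimately $C^2$ for each fixed $\epsilon$ when $c\in C^2$, by Theorem \ref{T:DISCOUNTEDPROBLEMSOLUTION}), producing the sandwich
\[
\liminf_{T\to\infty}\tfrac{\epsilon}{T}\E_x\int_0^T G_\epsilon(X^{U,D}_s)\,ds\ \le\ G(x)+r\ \le\ \limsup_{T\to\infty}\tfrac{\epsilon}{T}\E_x\int_0^T G_\epsilon(X^{a^\ast_\epsilon,b^\ast_\epsilon}_s)\,ds+r,
\]
and then lets $\epsilon\to 0$ using only the gradient bound and the uniform barrier bound of Proposition \ref{P:BOUNDSAB}; this gives (i) pointwise with no regularity issue. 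For (iii), instead of verifying a limit HJB, it shows that the ergodic cost $J(x,U^{a,b},D^{a,b})$ is \emph{continuous in the barriers} (Proposition \ref{P:reflectingcontinuity} and Lemmas \ref{L:reflectingcontinuity}, \ref{L:reflectingunboundedinfinity}, \ref{L:reflectingboundedlimit}), including the degenerate collapsing case, which is treated separately for bounded and unbounded variation; optimality of $(a^\ast,b^\ast)=\lim(a^\ast_\epsilon,b^\ast_\epsilon)$ then follows from $\limsup_\epsilon J(x,U^{a^\ast_\epsilon,b^\ast_\epsilon},D^{a^\ast_\epsilon,b^\ast_\epsilon})=G(x)$. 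You would need to either supply the missing regularity theory for $w$ (not available by soft arguments) or replace your final step by an argument of this continuity type.
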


%%%%%%%%%%%%%%%%%%%%%%%%%%%%%%%%%%%%%%%%%%%%%%%%%%%%%%%%%%%%%%%%%%%%%%%%%%%%%%%%%%%%%%%%%%%%%%%%%%%%%%%%%%%%%%%%%%%%%%%%%%%%%%%%%%%%%
\section{Preliminary results}\label{S:Verification}
In order to solve the optimal control problems for the cost functions defined above, 
the usual approach,
when the process is an It\^o diffusion,  
is to formulate a verification theorem with a Hamilton-Jacobi-Bellman (HJB) equation, as done in \cite{SLG,H}, 
or more recently, 
for two sided problems, in \cite{KKXYZ,CMO}). 
We use a similar approach for L\'evy process. First we prove some basic properties of the value functions.

\begin{proposition}\label{P:notdegenerate}
For every $\epsilon>0$, the functions $G_{\epsilon}$ and $G$  are finite. Moreover, $G_{\epsilon}$ is upper semicontinuous.    
\end{proposition}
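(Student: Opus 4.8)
The plan is to sandwich $G_\epsilon$ and $G$ between $0$ and the cost of one explicit admissible control, and then to deduce upper semicontinuity from convexity. The lower bounds are immediate: in both $J_\epsilon$ and $J$ every term is non-negative, since $c\geq 0$, $q_u,q_d>0$, and $U,D$ are non-decreasing so that $dU,dD\geq 0$; hence $G_\epsilon\geq 0$ and $G\geq 0$. The real content is the finiteness of the infima, for which it suffices to exhibit a single control of finite cost. I would take the doubly reflecting control $(U^{a,b},D^{a,b})$ of \eqref{D:skorhod} with $a<0<b$, whose controlled trajectory stays in $[a,b]$; since a convex $c$ is continuous, $c(X^{a,b}_s)\leq C_{a,b}:=\max_{[a,b]}c<\infty$.

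For the discounted problem this bounds the running part of $J_\epsilon(x,U^{a,b},D^{a,b})$ by $C_{a,b}/\epsilon$. For the barrier parts I would use Fubini to write $\E_x\int_{(0,\infty)}e^{-\epsilon s}\,dU^{a,b}_s=\E_x\big[U^{a,b}_{e(\epsilon)}\big]$, and similarly for $D^{a,b}$, and then invoke \cite[Thm. 6.3]{AAGP} for the finiteness of $\E_x[U^{a,b}_{e(\epsilon)}]$ and $\E_x[D^{a,b}_{e(\epsilon)}]$; together with the finite initial terms $q_uu_0^{a,b}+q_dd_0^{a,b}$ this yields $G_\epsilon(x)\leq J_\epsilon(x,U^{a,b},D^{a,b})<\infty$. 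For the ergodic problem the Cesàro running term is again at most $C_{a,b}$, while \cite[Thm. 6.3]{AAGP} provides the finiteness and linear growth of $\E_x[U^{a,b}_T]$ and $\E_x[D^{a,b}_T]$, so that $\limsup_{T}\tfrac1T(q_u\E_x U^{a,b}_T+q_d\E_x D^{a,b}_T)<\infty$ and hence $G(x)\leq J(x,U^{a,b},D^{a,b})<\infty$.

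For the upper semicontinuity of $G_\epsilon$ I would prove the stronger statement that $G_\epsilon$ is convex, because a convex function finite on all of $\R$ (as just established) is automatically continuous, and a fortiori upper semicontinuous. Realising all controlled processes on the single space carrying $\bar X_t:=X_t-X_0$, so that $X^{y,U,D}_t=y+\bar X_t+U_t-D_t$, fix $x_1,x_2$, $\lambda\in[0,1]$ and $x=\lambda x_1+(1-\lambda)x_2$. Given controls $(U^i,D^i)$ for the points $x_i$, the convex combination $(U,D)=\lambda(U^1,D^1)+(1-\lambda)(U^2,D^2)$ is admissible and satisfies $X^{x,U,D}_t=\lambda X^{x_1,U^1,D^1}_t+(1-\lambda)X^{x_2,U^2,D^2}_t$; convexity of $c$ controls the running cost, while the barrier and initial costs combine linearly, giving $J_\epsilon(x,U,D)\leq\lambda J_\epsilon(x_1,U^1,D^1)+(1-\lambda)J_\epsilon(x_2,U^2,D^2)$. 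Taking infima over the two controls yields convexity of $G_\epsilon$, and hence its continuity.

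The step requiring the most care is the transfer of controls between starting points, which underlies the convexity argument: one must use that the class $\mathcal{A}$ in Definition \ref{D:Admissiblestrategies} is defined only through adaptedness to $\mathbf{F}$ and integrability, with no reference to $x$, so that a control chosen for $x_i$ is admissible for $x$ and may be combined as above. This detour through convexity also circumvents a genuine difficulty in any direct near-optimal-control argument: for a fixed control the map $x'\mapsto J_\epsilon(x',U,D)$ is convex but, for fast-growing $c$, may equal $+\infty$ at the boundary of its effective domain, so it need not be upper semicontinuous pointwise. Passing instead through convexity of the value function together with its everywhere-finiteness sidesteps this entirely and delivers the claimed upper semicontinuity (indeed continuity) at once.
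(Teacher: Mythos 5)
Your proposal is correct, and it splits naturally into two parts. For \emph{finiteness} you take essentially the same route as the paper: both exhibit a doubly reflecting control and appeal to \cite[Thm. 6.3]{AAGP} (the paper works with $(U^{0,b},D^{0,b})$ and, rather than citing the moment statement wholesale, extracts the explicit expectation bound \eqref{eq:reflectingbound} from the pathwise identity of that theorem via a martingale decomposition of the stochastic integral term --- work it reuses later in Section \ref{S:Candidate} --- and invokes \cite[Cor. 6.6]{AAGP} for the ergodic part; you delegate the passage from pathwise identity to expectations entirely to the citation, which is a lighter but acceptable treatment). The genuine difference is in the \emph{upper semicontinuity}. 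The paper argues directly: given an $r$-optimal control for $x$, it builds a control for nearby $y$ by enlarging the initial jumps, $U^y_0=u_0+(x-y)^+$, $D^y_0=d_0+(y-x)^+$, so the two controlled trajectories coincide for $t>0$ and the costs differ exactly by $q_u(x-y)^++q_d(y-x)^+$; letting $y\to x$ gives $\limsup_{y\to x}G_\epsilon(y)\le G_\epsilon(x)+r$ (the ``$-r$'' in the paper is a sign typo), and symmetrizing this argument would even give the Lipschitz bound $\vert G_\epsilon(x)-G_\epsilon(y)\vert\le\max(q_u,q_d)\vert x-y\vert$. You instead prove convexity of $G_\epsilon$ by averaging controls on a common probability space (using convexity of $c$ and linearity of the barrier and initial terms) and conclude continuity from finiteness plus convexity. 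Both arguments rest on the same reduction to the single measure $\P=\P_0$ via spatial homogeneity, which the paper also performs at the start of its proof, and both are sound. What your route buys is structural: you obtain continuity (indeed local Lipschitz continuity) and, more importantly, convexity of $G_\epsilon$ at this early stage, a property the paper only recovers much later, in the proof of Theorem \ref{T:DISCOUNTEDPROBLEMSOLUTION}, as a by-product of identifying $G_\epsilon$ with a primitive of the Dynkin-game value. Your closing caveat --- that one cannot simply fix a control and vary the starting point --- is exactly the difficulty the paper's initial-jump modification is designed to circumvent, so the two proofs are in this sense complementary.
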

\begin{proof}
For the ergodic value, finiteness is established by taking a reflecting control and using  \cite[Cor. 6.6]{AAGP}. 
For the $\epsilon$-discounted value, from eq. (70) in \cite[Thm. 6.3]{AAGP},  
we have for $(U^{0,b},D^{0,b}), \ b >0$: 
\begin{multline*}
    b D^{0,b}_t  \leq b^2+x^2  +2\int_{(0,t)} X^{0,b}_{s^-}dX_s+\frac{\nu^2}{2}t 
    + \sum_{s \leq t} \left(\mathbf{1}_{\lbrace\Delta X_s \geq b\rbrace} (2 \Delta X_s b+b) 
    \right. \\ \left. + \mathbf{1}_{\lbrace\Delta X_s \leq -b\rbrace}(b^2 -2b \Delta X_s)+ \mathbf{1}_{\lbrace\vert \Delta X_s \vert <b\rbrace } \Delta X_s^2 \right).
\end{multline*}
Therefore, observe that the process 
$$
t \rightarrow \int_{(0,t)} X^{0,b}_{s^-}d(X_s-s\E X_1),
$$ 
is a martingale. This is deduced by decomposing $X$ as the sum of two independent L\'evy process, one with second moments, using \cite[Thm. IV.2.11]{ProtterPE} and the other a Compound Poisson process. Thus, by taking expectations:
\begin{multline}\label{eq:reflectingbound}
    b \E_{x}(D^{0,b}_t ) \leq b^2+x^2  +2b \E \vert X_1 \vert +\frac{\nu^2}{2}t \\ + t\int_{\mathbb{R}}\left(\mathbf{1}_{\lbrace y \geq b \rbrace} (2 y b+b) 
    + \mathbf{1}_{\lbrace y \leq -b \rbrace}(b^2 -2b y)+ \mathbf{1}_{\lbrace \vert y \vert <b \rbrace } y^2 \right)\Pi(dy).
\end{multline}
Then, integrating by parts and by taking again a reflecting control we deduce the finiteness of 
$J_{\epsilon}(x,U^{0,b},D^{0,b})$.

For the upper continuity, first notice that
\begin{multline*}
    G_{\epsilon}(x)=  \inf_{(U,D)\in \mathcal{A}} \E\left( \int_{(0,\infty)}  e^{-\epsilon s}\left(c(x+X_s^{U,D})ds +q_udU_s +q_d dD_s\right) \right. \\ +q_uu_0+q_dd_0\bigg). 
\end{multline*}
Therefore for this proposition we only work with the probability measure $\P$.
Now fix $x \in \mathbb{R}$ and $r>0$. 
Take  $(U,D)\in\mathcal{A}$ such that  
$$ 
J_{\epsilon}(x,U,D)-r\leq G_{\epsilon}(x).
$$ 
Take $y \in \mathbb{R}$ and let $(U^y,D^y) \in \mathcal{A}$ be defined as
\begin{equation*}
    D^y_0=d_0+(y-x)^+ ,  \ U^y_0=u_0+(x-y)^+, \
D^y_t=D_t, \ U^y_t=U_t \ \text{for all } t>0.
\end{equation*}
Notice that $(U^y,D^y)$ are right continuous processes as they only differ by a constant from $(U,D)$. 
It is clear $J_{\epsilon}(x,U,D)- J_\epsilon(y,U^y,D^y)=-(q_u (x-y)^+ +q_d (y-x)^+)$. 
Therefore 
$$
\limsup_{y \to x} G_{\epsilon}(y) \leq \limsup_{y \to x} J_{\epsilon}(y,U^y,D^y) = J_{\epsilon}(x,U,D)\leq G_{\epsilon}(x)-r. 
$$
The proof is concluded because $r$ is arbitrary.
\end{proof}
To avoid redundancy in the hypotheses of the theorems of this section we need the following standard result, which can be proved using different variants of It\^o's formula (see \cite[II.7]{ProtterPE}).
\begin{definition}
    We say that a function is linear outside an interval $[a,b]$ when is differentiable
    on $(a,b)^c$ and $u'(x)=u'(a)$ for all $x\leq a$ and $u'(x)=u'(b)$ for all $x \geq b$.
\end{definition}

\begin{proposition}\label{P:technicalinfinitesimal}
 Consider an interval $[a,b]$. \\ 
{\rm(i)} Assume that $X$ has unbounded variation.
A function $u \in C^2(\mathbb{R})$ that is linear outside $[a,b]$
is in the domain of the infinitesimal generator, 
$\mathcal{L}u$ is continuous, and
\begin{equation*}
\mathcal{L}u(x) =\mu u'(x)+\int_{\mathbb{R}}\left(u(x+y)-u(x)-yu'(x)\right) \ \Pi(dy) +\frac{\nu^2}{2}u''(x).
\end{equation*}
{\rm(ii)}
On the other hand, if the process $X$  has bounded variation, 
a function $u \in C^1(\mathbb{R})$ 
that is linear outside $[a,b]$
is in the domain of the infinitesimal generator, $\mathcal{L}u$ is continuous, and
\begin{equation*}
\mathcal{L}u(x)= \mu u'(x)+\int_{\mathbb{R}}\left(u(x+y)-u(x)-yu'(x)\right)\Pi(dy).
\end{equation*} 
\end{proposition}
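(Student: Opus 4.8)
The plan is to read the formula off It\^o's formula applied to $u(X_t)$, take expectations, divide by $t$ and let $t\searrow 0$. The structural fact I would use from the very beginning is that, since $u\in C^2(\mathbb{R})$ (resp. $u\in C^1(\mathbb{R})$) is linear outside the compact interval $[a,b]$, its derivative $u'$ is bounded on all of $\R$ and, in the unbounded variation case, $u''$ is bounded with support in $[a,b]$. This global boundedness is exactly what will control every term below.

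In the unbounded variation case I would apply It\^o's formula (see \cite[II.7]{ProtterPE}) and then substitute the decomposition \eqref{D:LEVYPROCESSEQUATION}, $dX_s=\mu\,ds+\nu\,dW_s+\int_{\R}y\,\tilde N(ds,dy)$, to obtain
\begin{align*}
u(X_t)-u(X_0) &= \int_0^t \mathcal{L}u(X_s)\,ds + \nu\int_0^t u'(X_{s^-})\,dW_s \\
&\quad + \int_{[0,t]\times\R}\big(u(X_{s^-}+y)-u(X_{s^-})\big)\,\tilde N(ds,dy),
\end{align*}
where $\mathcal{L}u$ is precisely the expression in the statement and where the $u'(X_{s^-})y$ contribution of the stochastic integral against $\tilde N$ cancels the like term in the compensated jump sum. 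Taking $\E_x$ and using that the two stochastic integrals are true martingales of zero mean, I would arrive at $\E_x u(X_t)-u(x)=\E_x\int_0^t \mathcal{L}u(X_s)\,ds$.

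The integrability that justifies this, and that also yields the continuity of $\mathcal{L}u$, I would get by splitting the jump integral. Near zero, Taylor's theorem gives $|u(x+y)-u(x)-yu'(x)|\le \tfrac12\|u''\|_\infty\,y^2$, integrable against $\Pi$ by the standing assumption $\int_{\R}(y^2\wedge|y|)\Pi(dy)<\infty$; for $|y|>1$, because $u$ is linear outside $[a,b]$ the same quantity is $O(|y|)$ uniformly for $x$ in a compact, again $\Pi$-integrable since the finite mean gives $\int_{|y|>1}|y|\Pi(dy)<\infty$. This produces a single $\Pi$-integrable function dominating $|u(x+y)-u(x)-yu'(x)|$ uniformly over $x$ in compacts, so dominated convergence (the drift and diffusion pieces $\mu u'+\tfrac{\nu^2}{2}u''$ being continuous by hypothesis) shows $\mathcal{L}u$ is continuous, and in particular bounded. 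Dividing $\E_x u(X_t)-u(x)=\E_x\int_0^t \mathcal{L}u(X_s)\,ds$ by $t$ and letting $t\searrow0$, the right continuity of $X$ at $0$ together with the continuity and boundedness of $\mathcal{L}u$ and dominated convergence give $\lim_{t\searrow0} t^{-1}\big(\E_x u(X_t)-u(x)\big)=\mathcal{L}u(x)$, proving that $u$ is in the domain with the asserted formula.

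The bounded variation case ($\nu=0$, no $u''$ term) follows the same scheme with the change of variables formula valid for $C^1$ functions of finite variation semimartingales; here the integrability near zero is supplied by the bounded variation property $\int_{|y|<1}|y|\Pi(dy)<\infty$ together with $|u(x+y)-u(x)-yu'(x)|\le 2\|u'\|_\infty|y|$, the large jumps being controlled as before. The hard part will be the martingale justifications, namely checking that the integrals against $\tilde N$ (and $W$) are genuine martingales rather than merely local ones; for this I would localize by a sequence of stopping times and pass to the limit using the uniform $\Pi$-integrable domination above, or alternatively lean on the moment bounds for the controlled process already recorded in Proposition \ref{P:notdegenerate} via \cite{AAGP}.
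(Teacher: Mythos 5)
Your proposal is correct and follows exactly the route the paper itself indicates: the paper states this as a standard result "which can be proved using different variants of It\^o's formula (see \cite[II.7]{ProtterPE})" and gives no further detail, while you carry out precisely that argument (It\^o/change-of-variables formula, splitting small and large jumps against the bound $|u(x+y)-u(x)-yu'(x)|\leq \tfrac12\|u''\|_\infty y^2$ resp. $O(|y|)$, martingale justification, then Ces\`aro limit as $t\searrow 0$). One cosmetic remark: boundedness of $\mathcal{L}u$ does not follow from its continuity as you phrase it, but it does follow directly from your dominating function, since that domination is in fact uniform over all $x\in\mathbb{R}$, not merely over compacts.
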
 

%%%%%%%%%%%%%%%%%%%%%%%%%%%%%%%%%%%%%%%%%%%%%%%%%%
%%%%%%%%%%%%%%%%%%%%%%%%%%%%%%%%%%%%%%%%%%%%%%%%%%
%%%%%%%%%%%%%%%%%%%%%%%%%%%%%%%%%%%%%%%%%%%%%%%%%%
\begin{theorem}\label{T:Verification}
Consider  
$\epsilon > 0,\delta > 0$, a cost function $c$ (with associated function $c_{\delta}$), and a convex function 
$u $ in the domain of the infinitesimal generator such that
\begin{equation}\label{eq:conditions}
\mathcal{L}u (x)-\epsilon u(x)+ c_{\delta}(x)  \geq 0, \ -q_u \leq u'(x) \leq q_d,\quad\text{for all $x \in\mathbb{R}$}. 
\end{equation}
Furthermore, assume that
there exists an interval such that $u$ is linear in its complement and \vskip1mm\par\noindent{\rm(i)}
if the process is of unbounded variation, $u \in  C^{2}(\mathbb{R})$.
%\
\vskip1mm\par\noindent{\rm(ii)}
if the process is of bounded variation, $u \in  C^{1}(\mathbb{R})$.
\vskip2mm\par\noindent
Then, under these conditions:
\begin{multline*}
\liminf_{T \to \infty} \frac{\epsilon}{T}\E_{x}   \int_{(0,T]}
 u(X^{U,D}_s)ds  \leq \\ 
  \liminf_{T \to \infty}   \frac{1}{T} \E_{x}\left(\int_{(0,T)}
   c_{\delta}(X^{U,D}_s)ds +q_u U_T +q_d D_T \right),
\end{multline*}
for  all controlled process $X^{U,D}$ that satisfy the condition
\begin{equation}\label{eq:allcontroled}
\liminf_{T \to \infty} \frac{1}{T} \E_{x}u(X^{U,D}_T)=0. 
\end{equation}
\end{theorem}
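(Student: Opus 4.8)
The plan is to apply an appropriate version of It\^o's formula to the process $s\mapsto u(X^{U,D}_s)$ and to exploit the two hypotheses in \eqref{eq:conditions} in complementary ways: the differential inequality $\mathcal{L}u-\epsilon u+c_\delta\ge 0$ to bound $\epsilon u$ from below by $\mathcal{L}u+c_\delta$, and the gradient bound $-q_u\le u'\le q_d$ to control the contribution of the singular controls. Writing $Y=X^{U,D}$ as in \eqref{D:controlledequation}, I would first apply the It\^o formula valid in each regime — $u\in C^2$ when $X$ has unbounded variation and $u\in C^1$ when $X$ has bounded variation, exactly as in Proposition \ref{P:technicalinfinitesimal} (see \cite[II.7]{ProtterPE}) — to obtain the decomposition
\[
u(Y_T)-u(Y_0)=\int_{(0,T]}\mathcal{L}u(Y_{s^-})\,ds+M_T+R_T,
\]
where $M_T$ gathers the Brownian and compensated-Poisson stochastic integrals and $R_T$ collects the contributions of the absolutely continuous and jump parts of $U$ and $D$.

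The two structural facts I would use next are that $u$ is linear outside a compact interval $[a,b]$ and that $u'$ is bounded, indeed $u'\in[-q_u,q_d]$. Linearity outside $[a,b]$ forces $u''$ to have compact support and, together with the finite-mean assumption $\int_{\R}(y^2\wedge|y|)\Pi(dy)<\infty$, makes $\mathcal{L}u$ bounded and continuous; boundedness of $u'$ and of $\mathcal{L}u$, combined with the integrability $\E_x U_T,\E_x D_T<\infty$ guaranteed by admissibility and Proposition \ref{P:notdegenerate}, is what upgrades the local martingale $M$ to a true martingale with $\E_x M_T=0$. For the control term I would use the mean value theorem on each increment: since $u'\ge -q_u$ each upward push contributes at least $-q_u\,dU_s$, and since $u'\le q_d$ each downward push contributes at least $-q_d\,dD_s$, so that, accounting separately for continuous parts and for jumps of $U$ and $D$, one gets the pathwise bound $R_T\ge -q_uU_T-q_dD_T$.

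Combining the differential inequality with this bound yields, after taking $\E_x$ and using $\E_x M_T=0$,
\[
\epsilon\,\E_x\!\int_{(0,T]}\!u(Y_s)\,ds\le \E_x u(Y_T)-u(x)+\E_x\Big(\int_{(0,T]}\!c_\delta(Y_s)\,ds+q_uU_T+q_dD_T\Big).
\]
Dividing by $T$ and letting $T\to\infty$, the term $u(x)/T$ vanishes and the boundary contribution $\tfrac1T\E_x u(Y_T)$ is discarded precisely by hypothesis \eqref{eq:allcontroled}, giving the claimed inequality. I expect the main obstacles to be twofold. First, the rigorous justification that $M$ is a genuine martingale and that every expectation above is finite, handling uniformly the bounded- and unbounded-variation cases and the possibility that $X$, $U$ and $D$ jump simultaneously; this is where the linear-outside-$[a,b]$ hypothesis and the finite-mean condition on $\Pi$ do the real work. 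Second, the passage to the $\liminf$: since the boundary term $\tfrac1T\E_x u(Y_T)$ enters on the right with a positive sign while only its $\liminf$ is assumed to vanish, the limit must be taken along a carefully chosen subsequence so that \eqref{eq:allcontroled} can be invoked without inflating the $\liminf$ on the right-hand side. Making this last step airtight is the delicate point of the argument.
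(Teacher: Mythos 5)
Your proposal is correct and follows essentially the same route as the paper's proof: It\^o's formula (the $C^2$ version for unbounded variation, the finite-variation change of variables for bounded variation) applied to $u(X^{U,D})$, expectations killing the martingale terms, the gradient bound $-q_u\le u'\le q_d$ reducing the continuous and jump contributions of the controls to $-q_uU_T-q_dD_T$, the inequality $\mathcal{L}u\ge \epsilon u-c_\delta$ inside the drift integral, and finally division by $T$ together with \eqref{eq:allcontroled}. The liminf subtlety you flag at the end is genuine, but the paper treats it no more carefully (it simply says ``dividing by $T$ and taking limits\ldots in view of \eqref{eq:allcontroled}''); passing to a subsequence along which $\tfrac1T\E_x u(X^{U,D}_T)\to 0$ yields the inequality with the right-hand liminf taken along that subsequence, which is all that the later applications of the theorem require, since there the right-hand side is anyway enlarged to the limsup, i.e.\ to $J(x,U,D)$.
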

%%%%%%%%%%%%%%%%%%%%%%%%%%%%%%%%%%%%%%%%%%%%%%%%%%
%%%%%%%%%%%%%%%%%%%%%%%%%%%%%%%%%%%%%%%%%%%%%%%%%%
%%%%%%%%%%%%%%%%%%%%%%%%%%%%%%%%%%%%%%%%%%%%%%%%%%
\begin{proof}
Consider first case (i). Define the martingales $\{M^{(i)}_t\}_{t\geq 0}\ (i=1,2)$ by
\begin{align*}
    M^{(1)}_t&=\nu W_t+\int_{(0,t]\times \R}y\,\tilde N(ds,dy),\\
    M^{(2)}_t&=\int_{(0,t]\times \R}\left(
    u(X^{U,D}_{s^-}+y)-u(X^{U,D}_{s^-})-yu'(X^{U,D}_{s^-})
    \right)\tilde N(ds,dy).
\end{align*}
%
%Fix $n>0$ and let
%$$
%T_n=\inf\lbrace t\geq 0\colon \vert X^{D,U}_t \vert \geq n  \rbrace \wedge T.
%$$  
Observe that $\langle(X^{U,D})^{c}, (X^{U,D})^{c}\rangle_t=\langle X^{c}, X^{c}\rangle_t=\nu^2 t$  (see \cite[Lem. 6.2]{AAGP}) and apply It\^o's formula (the following sums can be separated due to $u''$ being bounded):
\begin{align}
    u(X^{U,D}_{T})&-u(x+u_0-d_0) \notag \\ &=\int_{(0,T]}u'(X^{U,D}_{t^-})dX^{U,D}_t+\frac{\nu^2}{2}\int_{(0,T]}u''(X^{U,D}_{t^-})\,dt+M^{(2)}_T\notag\\
    &\quad+\int_{(0,T]\times \R}\left(
    u(X^{U,D}_{t^-}+y)-u(X^{U,D}_{t^-})-yu'(X^{U,D}_{t^-})
    \right)ds\Pi(dy)\notag\\
    &\quad +\sum_{0<s \leq t}\left( u(X^{U,D}_{s})-u(X^{U,D}_{s^-}+\bigtriangleup X_s)-u'(X_{s^-}^{U,D})(\bigtriangleup U_s -\bigtriangleup D_s) \right) \nonumber \\
    &=\int_{(0,T]}\mathcal{L}u(X^{U,D}_{t^-})\,dt+\int_{(0,T]}u'(X^{U,D}_{t^-})\,dM^{(1)}_t+M^{(2)}_T\notag\\
    &\quad+\int_{(0,T]}u'(X^{U,D}_{t^-})(dU^c_t-dD^c_t)\notag\\
    & \quad +\sum_{0<s \leq t} \left( u(X_s^{U,D})-u(X_{s^-}^{U,D}+\bigtriangleup X_s) \right)  \notag \\
    &\geq \int_{0}^T\left(\epsilon u-c_\delta\right)(X^{U,D}_{t^-})\,dt-q_u(U_{T}-u_0) -q_d (D_{T}-d_0)+M^{(3)}_{T}.\label{useful}
\end{align}
where we used both conditions in \eqref{eq:conditions}, $M^{(3)}$ is a martingale and the equality $X_s^{U,D}=X_{s^{-}}^{U,D}+\bigtriangleup
 X_s +\bigtriangleup U_s -\bigtriangleup D_s$.
We now take expectation to obtain
\begin{align*}
&\E_x u(X^{U,D}_{T}) -u(x-d_0+u_0) \\&+\E_x  \left(\int_0^Tc_\delta(X^{U,D}_{t^-})dt+q_u(U_{T}-u_0) +q_d  (D_{T}-d_0) \right)
\geq \E_x\int_0^T\epsilon u(X^{U,D}_{t^-})\,dt.
\end{align*}
Replacing $X_t^{U,D}$ by $X_{t^-}^{U,D}$ in the Lebesgue integrals,
dividing by $T$ and taking limits as $T \rightarrow \infty$, 
we conclude the proof for the case \rm(i) of unbounded variation, in view of \eqref{eq:allcontroled}. 
The case \rm(ii) of bounded variation follows similarly applying 
the change of variables for finite variation processes \cite[Thm. II.31]{ProtterPE}
instead of 
It\^o's formula (see \cite[Thm. II.32]{ProtterPE}).
\end{proof}
The candidate $u$ proposed in the following sections is linear outside an interval. This property allows to prove that the reflection strategies are optimal.
%%%%%%%%%%%%%%%%%%%%%%%%%%%%%%%%%%%%%%%%%%%%%%%%%%
%%%%%%%%%%%%%%%%%%%%%%%%%%%%%%%%%%%%%%%%%%%%%%%%%%
%%%%%%%%%%%%%%%%%%%%%%%%%%%%%%%%%%%%%%%%%%%%%%%%%%
\begin{theorem}\label{T:VerificationsBarriersOptimal}
Under the hypothesis of Theorem \ref{T:Verification}, 
if there is a pair of thresholds $a<0< b$ such that $u$ also satisfies
\begin{equation}\label{eq:freeboundary}
\left\{
\aligned
\mathcal{L}u(x)- \epsilon u(x)+c_{\delta}(x)&=0,    &\text{for all  $x \in (a,b)$},\\
u(x)&=u(a)+(a-x)q_u,                      &\text{for all $x \leq a,$}\\
u(x)&=u(b)+(x-b)q_d,                                 &\text{for all  $x \geq b$},  
\endaligned
\right.
\end{equation}
then,
\begin{multline*}
\limsup_{T \to \infty}\frac{1}{T} \E_{x} \Bigg(\int_{(0,T]} c_{\delta}(X_s^{a,b})\,ds+q_uU^{a,b}_T+q_dD^{a,b}_T \Bigg) = \\  
\limsup_{T \to \infty} \frac{\epsilon}{T}\E_{x} \bigg(\int_{(0,T]} u(X^{a,b}_s)\,ds \bigg).
\end{multline*}
\end{theorem}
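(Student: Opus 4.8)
The plan is to show that the inequality proved in Theorem~\ref{T:Verification} becomes an \emph{equality} when the control is the reflecting strategy $(U^{a,b},D^{a,b})$ and $u$ satisfies the free boundary system \eqref{eq:freeboundary}. The starting point is the chain of computations \eqref{useful} from the proof of Theorem~\ref{T:Verification}, applied to the specific controlled process $X^{a,b}$. In that derivation, the only place an \emph{inequality} was introduced was in the last line, where the two conditions in \eqref{eq:conditions} were used: namely $\mathcal{L}u-\epsilon u + c_\delta \geq 0$ and $-q_u \leq u'(x) \leq q_d$. The key observation is that under the reflecting dynamics these two inequalities are saturated exactly where the reflecting control actually acts, so no information is lost.

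First I would verify that each inequality turns into an equality along $X^{a,b}$. For the generator condition, the process $X^{a,b}$ lives inside $[a,b]$ for $t>0$ (the barriers reflect it back at $a$ and $b$), so $\mathcal{L}u(X^{a,b}_{t^-})-\epsilon u(X^{a,b}_{t^-}) + c_\delta(X^{a,b}_{t^-}) = 0$ holds by the first line of \eqref{eq:freeboundary}, since the integral $\int_0^T(\cdots)\,dt$ only samples the interior values. For the gradient condition, I would use the Skorokhod conditions \eqref{D:skorhod}: the continuous part $dU^{c,a,b}$ increases only when $X^{a,b}_t = a$, where $u'(a)=-q_u$ by the second line of \eqref{eq:freeboundary}, and $dD^{c,a,b}$ increases only when $X^{a,b}_t=b$, where $u'(b)=q_d$. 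Hence $\int_{(0,T]} u'(X^{a,b}_{t^-})(dU^{c}_t - dD^{c}_t) = -q_u U^{c}_T - q_d D^{c}_T$ exactly, matching the term that was previously only bounded. A parallel check is needed for the jump-reflection terms $\sum_s (u(X^{a,b}_s)-u(X^{a,b}_{s^-}+\Delta X_s))$: when a jump pushes the process outside $[a,b]$, the reflection brings it back to the boundary, and because $u$ is \emph{linear} outside $[a,b]$ with slopes exactly $-q_u$ and $q_d$, the telescoping of $u$ along such a jump-plus-reflection contributes precisely $-q_u\Delta U_s - q_d\Delta D_s$, again with equality rather than inequality.

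Assembling these, the line \eqref{useful} holds with equality for $X^{a,b}$, and taking expectations gives
\begin{multline*}
\E_x u(X^{a,b}_T) - u(x-d_0^{a,b}+u_0^{a,b}) + \E_x\Bigg(\int_0^T c_\delta(X^{a,b}_{t^-})\,dt + q_u(U^{a,b}_T - u_0^{a,b}) \\ + q_d(D^{a,b}_T - d_0^{a,b})\Bigg) = \E_x\int_0^T \epsilon u(X^{a,b}_{t^-})\,dt,
\end{multline*}
the martingale terms vanishing under expectation. Dividing by $T$, replacing $X^{a,b}_{t^-}$ by $X^{a,b}_t$ in the Lebesgue integrals, and passing to the limit as $T\to\infty$ then yields the claimed equality of Cesàro limits; the constant boundary terms $u_0^{a,b}, d_0^{a,b}$ and $u(\cdot)$ evaluated at the initial point disappear after normalization by $1/T$. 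As with Theorem~\ref{T:Verification}, the bounded-variation case is handled identically using the finite-variation change-of-variables formula \cite[Thm.~II.31]{ProtterPE} in place of It\^o's formula.

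The main obstacle I anticipate is the careful bookkeeping of the jump-and-reflection terms, establishing that $\sum_{0<s\leq T}(u(X^{a,b}_s)-u(X^{a,b}_{s^-}+\Delta X_s)) = -q_u \sum \Delta U^{a,b}_s - q_d \sum \Delta D^{a,b}_s$. This requires knowing that whenever the reflection is active at a jump time, the overshoot lands in the region where $u$ is linear with the correct slope, so that the difference of $u$-values is \emph{exactly} the slope times the reflection increment. This is precisely where the hypothesis that $u$ is linear outside $[a,b]$ with matching one-sided derivatives $-q_u$ and $q_d$ is indispensable, and where one must be attentive to whether $X^{a,b}_{s^-}+\Delta X_s$ can fall strictly beyond a barrier (triggering a nonzero $\Delta U$ or $\Delta D$) versus landing inside $[a,b]$ (no reflection, zero increment). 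A subsidiary point is ensuring the integrability needed to interchange expectation and the various sums, which the finite-expectation properties of $U^{a,b}_T$ and $D^{a,b}_T$ noted after \eqref{D:skorhod} (via \cite[Thm.~6.3]{AAGP}) should supply.
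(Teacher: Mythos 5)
Your proposal is correct and follows essentially the same route as the paper: it takes the chain of computations \eqref{useful} from the proof of Theorem \ref{T:Verification}, shows each of the three inequalities (generator, continuous reflection parts via the Skorokhod conditions, jump-plus-reflection terms via linearity of $u$ outside $[a,b]$) is saturated along $X^{a,b}$, and concludes by taking expectations, dividing by $T$, and using that $u(X^{a,b}_T)$ is bounded so the boundary terms vanish in the Ces\`aro limit. The only cosmetic difference is that the paper states the vanishing of $\E_x u(X^{a,b}_T)/T$ explicitly while you leave it implicit in the boundedness of the reflected process, and both arguments share the same harmless imprecision (the reflected process may sit at $a$ or $b$ for positive time, handled by continuity of $\mathcal{L}u-\epsilon u+c_\delta$ up to the closed interval).
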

%%%%%%%%%%%%%%%%%%%%%%%%%%%%%%%%%%%%%%%%%%%%%%%%%%
%%%%%%%%%%%%%%%%%%%%%%%%%%%%%%%%%%%%%%%%%%%%%%%%%%
%%%%%%%%%%%%%%%%%%%%%%%%%%%%%%%%%%%%%%%%%%%%%%%%%%
\begin{proof} In \eqref{useful}, as $X^{a,b}_{s-}\in(a,b)$, we have
$$
\int_{(0,T]}\mathcal{L}u(X^{a,b}_{t^-})\,dt=\int_{(0,T]}(\epsilon u-c_\delta)(X^{a,b}_{t^-})\,dt.
$$
Furthermore, we have $dU^{a,b}_{t}=dD^{a,b}_{t}=0$ when $X^{a,b}_{t}\in(a,b)$ 
and $u'(x)=-q_u$ for $x\leq a$ and $u'(b)=q_d$ for $x\geq b$. So, by using $X_t^{a,b}-X_{t^-}^{a,b}=0$ in the support of $(U^{a,b})^c,(D^{a,b})^c$:
$$
\int_{(0,T]}u'(X^{a,b}_{t^-})(d(U^{a,b})^c_t-d(D^{a,b})^c_t)=-q_u d(U^{a,b})^c_{T}-q_d d(D^{a,b})^c_{T}.
$$
On the other hand $u(X_s^{a,b})- u(X_{s^-}^{a,b}+\bigtriangleup X_s)=-q_u \bigtriangleup U_s-q_d \bigtriangleup D_s $ for all $s >0$.
So we have an equality in \eqref{useful}. %As 
%$$
%c_{\delta}(y)\leq c(y) + \delta \ \text{for all $y\in\R$,}
%$$
Taking limsup we obtain the result, because $u(X^{a,b}_T)$ is a bounded quantity due to $u$ being continuous.
\end{proof}

We proceed to show, in the following two theorems, 
that if in the discounted problem there is a candidate function
under the hypothesis of Theorem \ref{T:Verification}, it is the value function of the discounted problem.
\begin{theorem}\label{T:Verificationdiscounted}
Let $\epsilon>0$ and  $\delta>0$, suppose that $c$ is a cost function and there is convex function $u$ under the same hypothesis as Theorem \ref{T:Verification}.
Then we have:
\[u(x) \leq    \E_{x} \left(  \int_{(0,\infty)} e^{-\epsilon s}  \big(c_{\delta}(X^{U,D}_s)ds +q_u dU_s +q_d dD_s \big) +q_uu_0+q_dd_0 \right)    \] 
for all controlled processes $X^{U,D}$ that satisfy 
\begin{equation}\label{eq:Conditionforverification}
    \E_{x}\int_0^\infty e^{-\epsilon s}u(X^{U,D}_s)ds <\infty.
\end{equation}
\end{theorem}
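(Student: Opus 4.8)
The plan is to run the discounted analogue of the argument in Theorem~\ref{T:Verification}, applying the change-of-variables formula to the process $t\mapsto e^{-\epsilon t}u(X^{U,D}_t)$ rather than to $u(X^{U,D}_t)$. Since $t\mapsto e^{-\epsilon t}$ is continuous and of finite variation, integration by parts contributes no cross-variation term and yields $d\bigl(e^{-\epsilon t}u(X^{U,D}_t)\bigr)=-\epsilon e^{-\epsilon t}u(X^{U,D}_t)\,dt+e^{-\epsilon t}\,du(X^{U,D}_t)$, where the second differential is exactly the one expanded in \eqref{useful}. Inserting that It\^o expansion and using $\mathcal{L}u-\epsilon u\ge -c_\delta$ from \eqref{eq:conditions} for the absolutely continuous part, the slope bounds $-q_u\le u'\le q_d$ for the continuous parts of the controls, and the mean value theorem together with the same slope bounds for the control jumps $\Delta U_s,\Delta D_s$, one obtains the pathwise lower bound
\begin{multline*}
e^{-\epsilon T}u(X^{U,D}_T)-u(x+u_0-d_0)\ge -\int_{(0,T]}e^{-\epsilon t}c_\delta(X^{U,D}_{t^-})\,dt\\-q_u\!\int_{(0,T]}\!e^{-\epsilon t}\,dU_t-q_d\!\int_{(0,T]}\!e^{-\epsilon t}\,dD_t+\widehat M_T,
\end{multline*}
with $\widehat M$ a local martingale.

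Next I would take expectations. The integrands multiplying $dM^{(1)}$ and the $\tilde N$-compensator defining $\widehat M$ differ from those in Theorem~\ref{T:Verification} only by the bounded predictable factor $e^{-\epsilon t}\le 1$, so the same decomposition of $X$ into a finite-variance part and a compound Poisson part shows $\widehat M$ is a true martingale of zero mean; condition \eqref{eq:Conditionforverification} guarantees that $\E_x\int_0^T\epsilon e^{-\epsilon t}u(X^{U,D}_t)\,dt$ is finite, so no $\infty-\infty$ cancellation occurs. Taking expectations and then using the slope bounds once more in the form $u(x+u_0-d_0)\ge u(x)-q_uu_0-q_dd_0$ (split the initial displacement into an up-move of size $u_0$ and a down-move of size $d_0$) gives, for every $T$,
\begin{multline*}
u(x)\le \E_x\Bigl(\int_{(0,T]}e^{-\epsilon t}c_\delta(X^{U,D}_{t^-})\,dt+q_u\!\int_{(0,T]}\!e^{-\epsilon t}\,dU_t+q_d\!\int_{(0,T]}\!e^{-\epsilon t}\,dD_t\Bigr)\\+q_uu_0+q_dd_0-e^{-\epsilon T}\E_x u(X^{U,D}_T).
\end{multline*}

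Finally I would send $T\to\infty$. The three cost and control integrals are nondecreasing in $T$ because $c_\delta\ge 0$, $e^{-\epsilon t}\ge 0$ and $U,D$ are nondecreasing, so by monotone convergence they converge to the corresponding integrals over $(0,\infty)$, and the integrand $c_\delta(X^{U,D}_{t^-})$ may be replaced by $c_\delta(X^{U,D}_{t})$ since the two differ only on the countable jump set. It remains to control the boundary term $B(T):=e^{-\epsilon T}\E_x u(X^{U,D}_T)$. Here I use that $u$, being convex and affine outside a compact interval (with $u'(a)<0<u'(b)$ for the candidates of interest), is bounded below, say $u\ge -C$; then $h(s):=e^{-\epsilon s}\E_x u(X^{U,D}_s)+Ce^{-\epsilon s}\ge 0$ and, by \eqref{eq:Conditionforverification} and Tonelli, $\int_0^\infty h(s)\,ds<\infty$, whence $\liminf_{s\to\infty}h(s)=0$ and $B(T_n)\to 0$ along some sequence $T_n\to\infty$. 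Evaluating the displayed inequality at $T=T_n$ and passing to the limit yields the claim. The main obstacle is precisely this last step: the discounted boundary term need not vanish for every $T$, so one must combine the integrability hypothesis \eqref{eq:Conditionforverification} with the lower bound on $u$ to extract a subsequence along which it does; a secondary technical point, handled exactly as in Theorem~\ref{T:Verification}, is that in the bounded-variation case It\^o's formula is replaced by the change-of-variables formula for finite-variation processes \cite[Thm.~II.31]{ProtterPE}.
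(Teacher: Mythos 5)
Your overall strategy is sound, and it is genuinely different from the paper's. The paper never sends a deterministic horizon to infinity: it applies It\^o's formula on the random interval $[0,e(\epsilon)]$, where $e(\epsilon)$ is an exponential time independent of $X$, and then uses the identity $\E_x\int_{(0,e(\epsilon)]}g(X^{U,D}_{s^-})\,dY_s=\E_x\int_{(0,\infty)}e^{-\epsilon s}g(X^{U,D}_{s^-})\,dY_s$ (``killing equals discounting''); under this identity the boundary term $\E_x u(X^{U,D}_{e(\epsilon)})$ cancels \emph{exactly} against the $\epsilon u$ integral, so no limiting procedure and no subsequence extraction are needed. Your discounted It\^o formula achieves the same cancellation of the $\epsilon u$ terms pathwise, but leaves the deterministic boundary term $e^{-\epsilon T}\E_x u(X^{U,D}_T)$, which you must then kill as $T\to\infty$; both proofs invoke \eqref{eq:Conditionforverification} precisely at this point (the paper to justify the Fubini step behind its cancellation, you to control the boundary term). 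The remaining ingredients --- the jump estimates via the slope bounds, the martingale property of $\widehat M$, monotone convergence of the cost terms, and the final inequality $u(x)-q_uu_0-q_dd_0\le u(x+u_0-d_0)$ --- coincide with the paper's computations.

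The one genuine gap is the claim that $u$ is bounded below. The hypotheses of Theorem \ref{T:Verification} are convexity, $-q_u\le u'\le q_d$, linearity outside a compact interval, and \eqref{eq:conditions}; they do not force the left-tail slope to be $\le 0$ and the right-tail slope to be $\ge 0$. For instance $u(x)=-q_ux$ (or any admissible $u$ whose right-tail slope is negative) satisfies all the hypotheses for suitable $c_\delta$, e.g.\ when $c_\delta$ grows superlinearly, yet it tends to $-\infty$ as $x\to+\infty$, so no constant $C$ with $u\ge -C$ exists and your Tonelli argument breaks down. Your parenthetical restriction to candidates with $u'(a)<0<u'(b)$ tacitly concedes this, but the theorem is stated, and used, for every $u$ satisfying the hypotheses of Theorem \ref{T:Verification}. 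The repair is cheap and preserves your structure: read \eqref{eq:Conditionforverification} as absolute integrability (which the paper itself needs for its Fubini step), note that $\E_x\vert u(X^{U,D}_s)\vert<\infty$ for each fixed $s$ by the linear growth of $u$, the finite mean of $X$ and Definition \ref{D:Admissiblestrategies}(ii), and apply Tonelli--Fubini to get $\int_0^\infty e^{-\epsilon s}\vert \E_x u(X^{U,D}_s)\vert\,ds<\infty$; this forces $\liminf_{s\to\infty}e^{-\epsilon s}\vert\E_x u(X^{U,D}_s)\vert=0$, which is exactly the subsequence you need, with no lower bound on $u$. Finally, a sign slip: taking expectations in your pathwise bound yields $u(x)\le\E_x(\cdots)+q_uu_0+q_dd_0+e^{-\epsilon T}\E_x u(X^{U,D}_T)$, with a \emph{plus} sign on the boundary term; this is immaterial once that term vanishes along a subsequence, but it would matter if one only had the one-sided bound $\liminf_{s\to\infty}e^{-\epsilon s}\E_x u(X^{U,D}_s)\le 0$.
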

\begin{proof}
Observe that we can assume
$ \E_x  \int_{(0,\infty)}e^{-\epsilon s}c_{\delta}(X_s^{U,D}) ds   <\infty$,  
otherwise the claim is trivial. 
Let $e(\epsilon)$ be an exponential random variable  with parameter $\epsilon$, 
independent of $X$. We now apply It\^o's formula on the random interval $[0,e(\epsilon)]$, which, with the same arguments as in the proof of Theorem \ref{T:Verification}, give
\begin{multline}\label{E:Verificationdiscounted2}
\E_{x}\left(u(X^{U,D}_{e(\epsilon)})-u(x-d_0+u_0)\right)  \\ \geq \E_{x} \left(\int_{(0
,e(\epsilon))}(\epsilon u-c_{\delta})(X_{s^-}^{U,D})ds 
-q_u dU_{e(\epsilon)}-q_d  dD_{e(\epsilon)} \right) . 
\end{multline}
On the other hand for every stochastic process $Y_s$ with finite variation independent of $e(\epsilon)$ and $g$ continuous satisfying:
$$\E_x \bigg(  \int_{(0,\infty)} e^{-\epsilon s} \left\vert g(X_{s^-}^{U,D})\right\vert dY_s \bigg)<\infty,$$
we have
\begin{align*}
\E_{x}\int_{(0,e(\epsilon)]}g(X_{s^{-}}^{U,D}) dY_s&=\E_{x} \int_{(0,\infty)} \epsilon e^{-\epsilon u} \int_{(0,u]} g(X_{s^-}^{U,D}) \ dY_s du  \nonumber 
\\ &= \E_{x}  \int_{(0,\infty)}\int_{[s,\infty)}\epsilon e^{-\epsilon u}g(X_{s^-}^{U,D}) \ du   dY_s \nonumber \\ &=\E_x  \int_{(0,\infty)} e^{-\epsilon s} g(X_{s^-}^{U,D})dY_s .
 \end{align*}
Therefore the inequality \eqref{E:Verificationdiscounted2} is equivalent to:
\begin{align*}
0 \geq   \E_{x}  \int_{(0,\infty)}e^{-\epsilon s} \left(-q_u dU_s -q_d dD_s -c_{\delta}(X_{s^-}^{U,D})ds \right) + u(x-d_0+u_0).
\end{align*}
Due to the fact that $X,U,D$ are càdlàg, again, we rewrite the inequality as
\begin{align*}%\label{E:Verificationdiscounted4}
0 \geq   \E_{x}  \int_{(0,\infty)}e^{-\epsilon s} \left(-q_u dU_s -q_d dD_s -c_{\delta}(X_{s}^{U,D})ds \right) + u(x-d_0+u_0).
\end{align*}
Thus, to finish the proof it is enough to prove $$u(x)-q_uu_0-q_dd_0 \leq u(x-d_0+u_0),$$
which is clearly true because $-q_u \leq u' \leq q_d$, concluding the proof of the Theorem.
\end{proof}
Finally we formulate the discounted version of Theorem \ref{T:VerificationsBarriersOptimal}. 
We omit the proof because it is exactly the same as in the ergodic version except in the case 
$x\in(a,b)^c$, where the linearity of $u(x)$ must be used. % in that region.
\begin{theorem}\label{T:VerificationsBarriersOptimalDISCOUNT}
Assume that $u$ satisfies the hypothesis of Theorem \ref{T:Verificationdiscounted}.
If there is a couple $a<b$ such that $u$ also satisfies \eqref{eq:freeboundary}, then,
\begin{equation*}
\E_{x}   \int_{(0,\infty)}e^{-\epsilon s}\left(c_{\delta}(X_s^{a,b}) ds+ q_udU^{a,b}_s+q_d dD^{a,b}_s  \right)+q_uu_0^{a,b}+q_dd_0^{a,b}   =  u(x).
\end{equation*}
\end{theorem}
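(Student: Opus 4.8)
The plan is to rerun the argument of Theorem~\ref{T:Verificationdiscounted} verbatim with the reflecting control $(U^{a,b},D^{a,b})$ and to verify that every inequality there collapses to an equality, the only genuinely new ingredient being the treatment of the initial reflection through the linearity of $u$ when $x\notin(a,b)$. Before anything else I would check the integrability hypothesis \eqref{eq:Conditionforverification}: since the doubly reflected process takes values in $[a,b]$ for all $s>0$ (see \cite{AAGP}) and $u$ is continuous, $u(X^{a,b}_s)$ is bounded on $s>0$, so $\E_x\int_0^\infty e^{-\epsilon s}u(X^{a,b}_s)\,ds\le \epsilon^{-1}\sup_{[a,b]}|u|<\infty$ and Theorem~\ref{T:Verificationdiscounted} is applicable; in particular $\E_x u(X^{a,b}_{e(\epsilon)})$ is finite.

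Next I would apply It\^o's formula (or the finite-variation change of variables in the bounded-variation case) on the random interval $[0,e(\epsilon)]$ exactly as in the derivation of \eqref{E:Verificationdiscounted2}, and argue that for $X^{a,b}$ the inequality \eqref{useful} is in fact an equality, by repeating the three observations from the proof of Theorem~\ref{T:VerificationsBarriersOptimal}: (a) since $X^{a,b}_{t^-}\in[a,b]$, with the boundary visited on a time set of Lebesgue measure zero, the free boundary equation in \eqref{eq:freeboundary} gives $\mathcal{L}u(X^{a,b}_{t^-})=(\epsilon u-c_\delta)(X^{a,b}_{t^-})$ in the $dt$-integral; (b) by the Skorokhod conditions \eqref{D:skorhod} the measure $dU^{a,b}$ is carried by $\{X^{a,b}=a\}$ where $u'(a)=-q_u$ and $dD^{a,b}$ by $\{X^{a,b}=b\}$ where $u'(b)=q_d$, so the continuous-control term equals $-q_u\,d(U^{a,b})^c-q_d\,d(D^{a,b})^c$; and (c) each reflecting jump keeps $u(X^{a,b}_s)-u(X^{a,b}_{s^-}+\Delta X_s)=-q_u\,\Delta U^{a,b}_s-q_d\,\Delta D^{a,b}_s$, again by the linearity of $u$ outside $(a,b)$. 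Hence \eqref{E:Verificationdiscounted2} holds with equality. I would then apply the Fubini-type identity of the proof of Theorem~\ref{T:Verificationdiscounted} to convert each $e(\epsilon)$-integral into its $e^{-\epsilon s}$-discounted counterpart; the term $\E_x u(X^{a,b}_{e(\epsilon)})$ cancels against $\E_x\int_{(0,e(\epsilon))}\epsilon u(X^{a,b}_{s^-})\,ds$, leaving
\begin{equation*}
u(x-d_0^{a,b}+u_0^{a,b})=\E_x\int_{(0,\infty)}e^{-\epsilon s}\big(c_\delta(X^{a,b}_s)\,ds+q_u\,dU^{a,b}_s+q_d\,dD^{a,b}_s\big),
\end{equation*}
where $X^{a,b}_{s^-}$ has been replaced by $X^{a,b}_s$ in the $ds$-integral since jump times are Lebesgue-null.

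Finally comes the step where the discounted proof departs from the ergodic one: I would evaluate $u(x-d_0^{a,b}+u_0^{a,b})$ via linearity. For $a\le x\le b$ one has $u_0^{a,b}=d_0^{a,b}=0$ and there is nothing to do; for $x<a$ the argument reduces to $x-d_0^{a,b}+u_0^{a,b}=a$, and \eqref{eq:freeboundary} gives $u(x)=u(a)+(a-x)q_u$, whence $u(x-d_0^{a,b}+u_0^{a,b})=u(x)-q_uu_0^{a,b}-q_dd_0^{a,b}$; the case $x>b$ is symmetric. Substituting this identity into the displayed equality yields the assertion. The main obstacle is entirely bookkeeping: one must make sure that the free boundary equation, the Skorokhod conditions and the linearity of $u$ together force an equality at each place where Theorem~\ref{T:Verificationdiscounted} produced only an inequality, and that the initial reflection is accounted for correctly through the linear pieces of $u$ when $x\notin(a,b)$.
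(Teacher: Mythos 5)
Your proposal is correct and is exactly the argument the paper intends: the paper omits this proof, stating it is the same as the ergodic barrier result (Theorem \ref{T:VerificationsBarriersOptimal}) run through the discounted verification (Theorem \ref{T:Verificationdiscounted}) with equalities, plus the linearity of $u$ outside $(a,b)$ to handle the initial reflection, which is precisely what you do, including the key check that the bounded reflected process satisfies \eqref{eq:Conditionforverification}. One small caveat (shared with the paper's own ergodic proof): your claim that the boundary is visited on a Lebesgue-null set of times can fail, e.g.\ for compound Poisson driving noise where $X^{a,b}$ sits at a barrier between jumps, but the step survives because $\mathcal{L}u-\epsilon u+c_{\delta}$ is continuous (Proposition \ref{P:technicalinfinitesimal}) and vanishes on $(a,b)$, hence on $[a,b]$.
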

%%%%%%%%%%%%%%%%%%%%%%%%%%%%%%%%%%%%%%%%%%%%%%%%%%%%%%%%%%%%%%%%%%%%%%%%%%%%%
%%%%%%%%%%%%%%%%%%%%%%%%%%%%%%%%%%%%%%%%%%%%%%%%%%%%%%%%%%%%%%%%%%%%%%%%%%%%%
%%%%%%%%%%%%%%%%%%%%%%%%%%%%%%%%%%%%%%%%%%%%%%%%%%%%%%%%%%%%%%%%%%%%%%%%%%%%%
\section{The associated Dynkin game}\label{S:DynkinAdjoint}
Reduction of control problems to optimal stopping problems is a possible solving strategy, 
an initial example being the proposal by Karatzas in \cite{KARATZAS}. 
In this example, a singular control problem that satisfies a verification theorem similar to Theorem \ref{T:Verification} is solved by finding the solution of an associated optimal stopping problem. 
In our situation, due to the nature of our two sided problem, the associated problem turns out to be a Dynkin game. %
The relationship between singular control problems and Dynkin games has been known for certain time. It was proved in \cite{KW} for finite horizon problems, in \cite{BOETIUS} for diffusions and in \cite{GT} for a more general setting. In fact as an alternative path to show the relationship between the control problems, the results of this last article could be adapted for Lévy processes if we assume further restrictions on the cost function (see \cite[eq. (13)]{GT0}). 
%Consequently, we study the properties of a particular game of the class proposed in \cite{Krylov}
%in the framework of diffusions, and  \cite{Lukasz Stettner} for general Markov processes.
Apart from the aformetioned articles, our main references are \cite{EG, PG} from where we also borrow some notation.

\subsection{Introduction and properties}
In this section we work with the function $c_{\delta}$ appearing in Definition \ref{D:costfunction}. 
To ease the notation we omit the $\delta$, denoting it by $c$. 
The underlying process $X$ is the L\'evy process with finite expectation defined in \eqref{D:LEVYPROCESSEQUATION}, 
$\epsilon$ is a fixed positive real number.
Given the discounted control problem with value function in Definition \ref{D:discountedgame},
the auxiliary problem will be a discounted Dynkin game with an integral cost, 
that we formulate through  a three-dimensional process.
Consider then  $\lbrace Z_t  = r+t \rbrace_{t \geq 0}$ and  $\big\lbrace I_t  =w+ \int_{0}^{t} e^{-\epsilon Z_s} c'(X_s) ds \big\rbrace_{t \geq 0} $, and the process
$$
\X=\{\X_t=(X_t,I_t,Z_t)\}_{t\geq 0},\quad \X_0=\x=(x,w,r).
$$
The process $\X$  is strong Markov with respect to the original filtration $\mathbf{F}=\{\mathcal{F}_t\}_{t \geq 0}$.
As usual denote by $\mathbf{P}_\x$ and  $\mathbf{E}_\x$ the probability measure and expected value respectively when the process starts at the point $\x$. 
For simplicity we denote $\mathbf{P}_{x,0,0}$ as $\mathbf{P}_x$ and $\mathbf{E}_{x,0,0}$ as $\mathbf{E}_x$. 
A stopping time is a measurable function $\tau \colon \Omega \rightarrow [0,\infty]$ s.t.
\[ 
\lbrace \omega\colon \tau(\omega)\leq t \rbrace \in \mathcal{F}_t , \quad\text{for all $t\geq 0$}.
\]
We denote the set of stopping times as $\Re$. In our setting, the first entry time to a Borel set is a stopping time. 
As usual, the infimum of the empty set is $\infty$.

Consider now the real functions 
\begin{align*}
G_1(x,w,r)&=w-q_u e^{-\epsilon r} ,\\ 
G_2(x,w,r)&=w+q_d e^{-\epsilon r},
\end{align*}
and denote the payoff
\begin{equation*}
M_\x(\tau,\sigma)=
\E_\x\left(G_1(\X_{\tau})\mathbf{1}_{\{\tau < \sigma\}}  
+ 
G_2(\X_{\sigma})\mathbf{1}_{\{\tau \geq \sigma\}} \right). 
\end{equation*}
Observe that, with the auxiliary function
$$
Q_r(\tau,\sigma)=e^{-r\epsilon}\left(
q_d\mathbf{1}_{\{\tau \geq \sigma\}}e^{-\epsilon\sigma}  
-q_u\mathbf{1}_{\{\tau < \sigma\}}e^{-\epsilon\tau} 
\right),
$$
%$
the payoff can be written as
\begin{equation*}
M_\x(\tau,\sigma)=
\E_\x\left(I(\tau\wedge\sigma)+Q_r(\tau,\sigma)\right),
\end{equation*}
where $Q_r$ depends only on the stopping times, not on the process $X$.
As usual we denote $Q_0$ by $Q$.
Again, for simplicity we denote $M_{x,0,0}$ as $M_x$.
Define the value functions 
\begin{equation*}
V_\ast(\x)  = \sup_{\tau} \inf_{\sigma} M_\x(\tau, \sigma),  
\quad  
V^{\ast}(\x)= \inf_{\sigma} \sup_{\tau} M_\x(\tau, \sigma).
\end{equation*} 
In this section,  hitting times of translated sets play a crucial role.
If $\gamma$ is defined by
\[ 
\gamma = \inf \lbrace t\geq 0\colon X_t \in A \rbrace,
\]
then the stopping time $\gamma_y$ is 
\[ 
\gamma_y= \inf \lbrace t\geq 0\colon X_t \in A -y\rbrace.
\]
As L\'evy processes are space invariant, 
if $X$ starts from zero, then $\gamma_y$ has the same distribution
as $\gamma$ when the process starts from $y$.
%%%%%%%%%%%%%%%%%%%%%%%%%%%%%%%%%%%%%%%%%%%%%%%%%%%%%%%
%%%%%%%%%%%%%%%%%%%%%%%%%%%%%%%%%%%%%%%%%%%%%%%%%%%%%%%
%%%%%%%%%%%%%%%%%%%%%%%%%%%%%%%%%%%%%%%%%%%%%%%%%%%%%%%%%%%%%%
\begin{definition}[$\epsilon$-Dynkin Game]\label{def:DG}
Given the process $\{\X_t=(X_t,I_t,Z_t)\}_{t\geq 0}$, 
the functions $G_1,G_2$,
$M_\x$, 
and the payoffs $V_\ast,V^{\ast}$ defined above, 
the $\epsilon$-Dynkin game is the problem consisting in finding two stopping times $(\tau^*,\sigma^*)$
s.t.
\begin{equation}\label{eq:nash}
M_\x(\tau,\sigma^{\ast}) \leq M_\x(\tau^{\ast},\sigma^{\ast}) \leq M_\x(\tau^{\ast},\sigma),  \quad\text{for all $\tau, \sigma \in \Re$,} 
\end{equation}
and the \emph{value function} $V_{\epsilon}$ s.t.
\begin{align*}%
V_{\epsilon}(\x)=V_\ast(\x)=V^{\ast}(\x)=M_{\x}(\tau^*,\sigma^*).
\label{eq:dg}
\end{align*}
The stopping times $(\tau^*,\sigma^*)$ in \eqref{eq:nash} constitute a \emph{Nash equilibrium}.
\end{definition}
To ease the notation we denote $V_\ast(x,0,0),V^{\ast}(x,0,0)$ 
by $V_\ast(x),V^{\ast}(x)$ respectively. Next result is based on \cite[Thm. 2.1]{EG},  
and gives useful properties of the game. 
\begin{proposition}\label{L:Peskir}
The $\epsilon$-Dynkin game of Definition \ref{def:DG} satisfies the following properties:
\vskip2mm\par\noindent
{\rm(i)} The functions $V_\ast$ and $V^{\ast}$ are Borel measurable.
Furthermore, for all $(x,w) \in \mathbb{R}^2$ and $r\geq 0$ it holds  
$$
V_\ast(\x)=V^{\ast}(\x).
$$ 
From now on we denote $V_{\epsilon}=V_\ast =V^{\ast}$.
\vskip2mm\par\noindent
{\rm(ii)}   Define the sets
\begin{align*}
D_1=\{x\in\R\colon G_1(x,0,0)=V_{\epsilon}(x,0,0)\}, \\ %\label{E:d1} \\
D_2=\{x\in\R\colon G_2(x,0,0)=V_{\epsilon}(x,0,0)\}.%\label{E:d2}     
\end{align*}
The stopping times
\begin{equation}\label{eq:tau}
\tau^{\ast}= \inf \lbrace t\geq 0\colon X_t\in D_1\rbrace
\end{equation}
\begin{equation}\label{eq:sigma}
\sigma^{\ast}=\inf \lbrace t\geq 0\colon X_t\in D_2\rbrace,   
\end{equation}
constitute a Nash equilibrium.
\vskip2mm\par\noindent
{\rm(iii)} The following statements hold:
\begin{align}
\E_{x} e^{-\epsilon (\sigma^{\ast}\wedge t)} V_{\epsilon}(X_{\sigma^{\ast}\wedge t})  &\leq  V_{\epsilon}(x)-\E_x\int_0^{\sigma^{\ast}\wedge t}  e^{-\epsilon s} c'(X_s)ds,\label{first}\\
\E_{x}  e^{-\epsilon (\tau^{\ast}\wedge t)} V_{\epsilon}(X_{\tau^{\ast}\wedge t}) &\geq  V_{\epsilon}(x)-\E_x \int_0^{\tau^{\ast}\wedge t}  e^{-\epsilon s} c'(X_s)ds, \label{second}\\
\E_{x}e^{-\epsilon (\tau^{\ast}\wedge \sigma^{\ast}  \wedge t)} V_{\epsilon}(X_{\tau^{\ast}\wedge \sigma^{\ast}  \wedge t})  &
= V_{\epsilon}(x)-\E_x\int_0^{\tau^{\ast} \wedge \sigma^{\ast}\wedge t}  e^{-\epsilon s} c'(X_s)ds.\label{martingale}
\end{align}
\end{proposition}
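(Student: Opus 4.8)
The plan is to obtain (i) and (ii) as a direct application of \cite[Thm. 2.1]{EG} to the strong Markov process $\X$ with payoff $M_\x$, and then to read the three relations in (iii) off the optimality structure this theorem produces. The device that makes everything explicit is the space-time homogeneity of the game in the last two coordinates, which I would record first.

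Writing $I(\tau\wedge\sigma)=w+e^{-\epsilon r}\int_0^{\tau\wedge\sigma}e^{-\epsilon s}c'(X_s)\,ds$ and $Q_r=e^{-\epsilon r}Q_0$, one gets $M_{(x,w,r)}(\tau,\sigma)=w+e^{-\epsilon r}M_{(x,0,0)}(\tau,\sigma)$; since the class of admissible stopping times is the same for every starting point (both $I$ and $Z$ are adapted functionals of $X$) and $e^{-\epsilon r}>0$, taking $\sup_{\tau}\inf_{\sigma}$ yields
\[
V_\epsilon(x,w,r)=w+e^{-\epsilon r}\,V_\epsilon(x,0,0).
\]
Two consequences are immediate. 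First, the conditions $G_1=V_\epsilon$ and $G_2=V_\epsilon$ reduce, after cancelling $w$ and $e^{-\epsilon r}$, to $V_\epsilon(x,0,0)=-q_u$ and $V_\epsilon(x,0,0)=q_d$, so $D_1,D_2$ depend on $x$ alone and the first hitting times of $\X$ to the corresponding regions coincide with the hitting times of $X$ in \eqref{eq:tau} and \eqref{eq:sigma}. Second, specializing to $w=r=0$ gives $V_\epsilon(\X_t)=e^{-\epsilon t}V_\epsilon(X_t)+\int_0^t e^{-\epsilon s}c'(X_s)\,ds$, the identity that turns (iii) into statements about the single process $t\mapsto V_\epsilon(\X_t)$.

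Next I would check the hypotheses of \cite[Thm. 2.1]{EG}. The ordering $G_1\leq G_2$ is trivial since $q_u,q_d>0$; the payoff processes $t\mapsto G_i(\X_t)$ are continuous, as they depend on $\X_t$ only through the continuous processes $I$ and $Z$; and $\X$ is quasi-left-continuous, inheriting this from the Lévy process $X$ and the continuity of $I,Z$. The only substantive point is integrability, where I would invoke \eqref{eq:cprima}: recalling that in this section $c$ denotes the smooth $c_\delta$, it gives $\E_x\int_0^\infty e^{-\epsilon s}|c'(X_s)|\,ds<\infty$, whence $\sup_{t\geq 0}|I_t|\leq |w|+e^{-\epsilon r}\int_0^\infty e^{-\epsilon s}|c'(X_s)|\,ds$ is $\P_\x$-integrable, the terms $q_ue^{-\epsilon r},q_de^{-\epsilon r}$ being bounded. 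With these verifications \cite[Thm. 2.1]{EG} provides the Borel measurability of $V_\ast,V^{\ast}$, the equality $V_\ast=V^{\ast}$, and the Nash-equilibrium property of the hitting times \eqref{eq:tau}, \eqref{eq:sigma}, which is exactly (i) and (ii).

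For (iii), inserting the identity $V_\epsilon(\X_t)=e^{-\epsilon t}V_\epsilon(X_t)+\int_0^t e^{-\epsilon s}c'(X_s)\,ds$ shows that the three displayed relations are precisely the assertions that $V_\epsilon(\X_{\cdot\wedge\sigma^{\ast}})$ is a supermartingale, $V_\epsilon(\X_{\cdot\wedge\tau^{\ast}})$ is a submartingale, and $V_\epsilon(\X_{\cdot\wedge\tau^{\ast}\wedge\sigma^{\ast}})$ is a martingale. These are the standard companions of the saddle point: before $\sigma^{\ast}$ the minimizer has not acted, so the stopped value can only decrease in conditional mean; before $\tau^{\ast}$ the symmetric statement for the maximizer gives a submartingale; and on the continuation region $D_1^c\cap D_2^c$ neither player moves, forcing the martingale equality. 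I expect the main obstacle to be ensuring that the cited theorem genuinely supplies these one-sided stopped (super/sub)martingale properties rather than only the value and the equilibrium, together with enough boundary regularity — that $\X$ reaches $D_1,D_2$ with $V_\epsilon=G_1,G_2$ holding at the hitting instant — for optional sampling to close the inequalities; quasi-left-continuity of $X$ is what makes this available. Should \cite[Thm. 2.1]{EG} yield only the value and the equilibrium, I would instead derive (iii) directly from the equilibrium inequalities \eqref{eq:nash} via the strong Markov property of $\X$ and optional sampling of the stopped processes.
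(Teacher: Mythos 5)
Your proposal is correct and follows essentially the same route as the paper: the homogeneity identity $V_\epsilon(x,w,r)=w+e^{-\epsilon r}V_\epsilon(x,0,0)$, the integrability check $\E_\x\sup_t|G_i(\X_t)|<\infty$ via \eqref{eq:cprima} to invoke \cite[Thm. 2.1]{EG} for (i)--(ii), and the reading of (iii) as stopped super/sub/martingale properties of $V_\epsilon(\X_\cdot)$. The one point you flagged as a possible obstacle is resolved in the paper exactly as your fallback anticipates it could be avoided: the stopped one-sided martingale properties are not re-derived but are quoted from statement (E) of \cite[Thm. 2.1]{PG}, and then combined with the homogeneity identity to yield \eqref{first}--\eqref{martingale}.
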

\begin{proof}[Proof of {\rm(i)}] 
For both statements of (i) we apply \cite[Thm. 2.1]{EG}. To do so, we only have to check $\E_{\x}\sup_t \vert G_i(\mathbf{X}_t) \vert  < \infty, \ i=1,2$. This is clear from the fact
$$\E_{x} \int_{0}^{\infty} \vert c'(X_s) \vert e^{-\epsilon s} ds  < \infty .$$ 
\vskip1mm\par\noindent
\textit{Proof of {\rm(ii)}.}
Again, from \cite[Thm. 2.1]{EG},  the stopping times
\begin{align*}
\tau^{\ast}&= \inf \lbrace u\geq 0\colon G_1(X_u,I_u,Z_u)=V_{\epsilon}(X_u,I_u,Z_u) \rbrace,\\ 
\sigma^{\ast}&=\inf\lbrace u\geq 0\colon G_2(X_u,I_u,Z_u)=V_{\epsilon}(X_u,I_u,Z_u) \rbrace,
\end{align*}
define a Nash equilibrium. In order to prove \eqref{eq:tau} we verify
\begin{equation}
G_1(\x)=V_{\epsilon}(\x) \Leftrightarrow G_1(x,0,0)=V_{\epsilon}(x,0,0).\label{eq:nash1} 
\end{equation}
Observe that
\begin{align}
V_{\epsilon}(x,w,r)&= \sup_{\tau}\inf_{\sigma}\E_{\x}\Bigg(w+\int_{0}^{\tau \wedge \sigma}c'(X_s)e^{-\epsilon Z_s}ds
+q_d\mathbf{1}_{\{\tau \geq \sigma\}}e^{-\epsilon Z_{\sigma} }  
\notag\\&
\qquad\qquad\qquad\quad\qquad\qquad
-q_u\mathbf{1}_{\{\tau<\sigma\}}e^{-\epsilon Z_\tau}   \Bigg)\notag\\
&=w+e^{-r}\sup_{\tau}\inf_{\sigma}\E_{x,0,0}\Bigg(\int_{0}^{\tau \wedge \sigma}c'(X_s)e^{-\epsilon s}ds  
\notag\\&
\qquad\qquad\qquad\qquad\qquad\qquad
+q_d \mathbf{1}_{\{\tau \geq \sigma\}}e^{-{\sigma} \epsilon}  -q_u \mathbf{1}_{\{\tau < \sigma\}}e^{- \epsilon {\tau} } \Bigg)\notag\\
&=w+e^{-r}V_{\epsilon}(x,0,0),\label{eq:ve}
\end{align} 
where we used the fact  that, for every $A \in \mathcal{B}(\mathbb{R}^3)$
\[
\P_{x,w,r}((X_t,I_t,Z_t) \in A )= \P_{x,0,0}((X_t,w+I_t,r+t) \in A), \quad \text{ for all } t \geq 0. 
\]
Using \eqref{eq:ve} and
$$
G_1(x,w,r)=w+e^{-r}G_1(x,0,0).
$$
the equation \eqref{eq:nash1} follows. 
The proof of \eqref{eq:sigma} is analogous.
\vskip1mm\par\noindent
\textit{Proof of {\rm(iii)}.}
First consider \eqref{first}. 
We apply (E) of \cite[Thm. 2.1]{PG}, and use 
\eqref{eq:ve}:
\begin{align*}
V_{\epsilon}(x)    &\geq \E_xV_{\epsilon}(X_{t\wedge \sigma^{\ast}},I_{t\wedge \sigma^{\ast}}, Z_{t \wedge \sigma^{\ast}}) \\
        &= \E_x\left(
        e^{-\epsilon(t \wedge \sigma^{\ast})}V_{\epsilon}(X_{t\wedge \sigma^{\ast}},0,0)+I_{t\wedge \sigma^{\ast}}
        \right),
\end{align*}
that is \eqref{first} (in fact the supermartingale property holds). 
Inequality \eqref{second} is proved analogously, and \eqref{martingale}
follows from the respective super and sub martingale properties in \eqref{first} and \eqref{second}.\end{proof}
%%%%%%%%%%%%%%%%%%%%%%%%%%%%%%%%%%%%%%%%%%%%%%%%%%%%%%%%%%%%%%%%%%%%%%%%%%
%%%%%%%%%%%%%%%%%%%%%%%%%%%%%%%%%%%%%%%%%%%%%%%%%%%%%%%%%%%%%%%%%%%%%%%%%%
%%%%%%%%%%%%%%%%%%%%%%%%%%%%%%%%%%%%%%%%%%%%%%%%%%%%%%%%%%%%%%%%%%%%%%%%%%
% 
\subsection{The value function}
We now return to our one-dimensional setting. With a slight abuse of notation consider the functions
$$
\aligned
V_{\epsilon}(x)&=V_{\epsilon}(x,0,0),\\
G_1(x)&=G_1(x,0,0)=-q_u,\\
G_2(x)&=G_2(x,0,0)=q_d.\\
\endaligned
$$
The optimal stopping times defined in \eqref{eq:tau} and \eqref{eq:sigma} that constitute the Nash equilibrium are then
$$
\aligned
\tau^*&=\inf\{t\geq 0\colon V_{\epsilon}(X_t)=-q_u\},\\
\sigma^*&=\inf\{t\geq 0\colon V_{\epsilon}(X_t)=q_d\}.
\endaligned
$$
Observe also that
$$
-q_u\leq V_{\epsilon}\leq q_d.
$$
We proceed to study properties of the value function $V_{\epsilon}$ of the $\epsilon$-Dynkin game,
that will allow us to construct a candidate for the 
verification theorems of Section \ref{S:Verification}.
The next proposition is crucial to deduce the necessary properties of $V_{\epsilon}$ without giving an explicit solution. 
\begin{proposition}\label{P:AdjoinDynkinVISINCREASING}
The value function $V_{\epsilon}$ is non-decreasing.
\end{proposition}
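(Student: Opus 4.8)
The plan is to show that $V_{\epsilon}$ is non-decreasing by exploiting the spatial homogeneity of the L\'evy process together with the monotone structure of the payoff $Q_r(\tau,\sigma)$ in $Q_r$, which depends only on the stopping times and not on $X$. The key observation is that $V_{\epsilon}(x)$ is a saddle value of $M_x(\tau,\sigma)=\E_x\big(\int_0^{\tau\wedge\sigma}c'(X_s)e^{-\epsilon s}ds+Q(\tau,\sigma)\big)$, and I would like to compare this quantity for two starting points $x<x'$ by coupling the processes started from $x$ and from $x'$ via the common driving noise, i.e. $X_t^{x'}=X_t^{x}+(x'-x)$ almost surely under the single measure $\P$.

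First I would fix $x<x'$ and use the coupling so that both controlled dynamics are driven by the same $X$ started from $0$, writing $X_t^{x}=x+X_t$ and $X_t^{x'}=x'+X_t$. For any pair of stopping times $(\tau,\sigma)$ used in the game from $x$, the same pair is admissible from $x'$, and under the coupling the two payoffs differ only through the integral term $\int_0^{\tau\wedge\sigma}c'(x+X_s)e^{-\epsilon s}ds$ versus $\int_0^{\tau\wedge\sigma}c'(x'+X_s)e^{-\epsilon s}ds$, since $Q(\tau,\sigma)$ is independent of the starting point. Because $c$ is convex with minimum at $0$, its derivative $c'$ is non-decreasing, so $c'(x+X_s)\leq c'(x'+X_s)$ pointwise. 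This is where monotonicity enters: the integrand increases with the starting point on the common path.

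The main step is then to convert this pathwise inequality into an inequality between the saddle values. A direct comparison $M_x(\tau,\sigma)\leq M_{x'}(\tau,\sigma)$ for every fixed $(\tau,\sigma)$ would immediately give $V_*(x)\leq V_*(x')$ and $V^*(x)\leq V^*(x')$, hence $V_{\epsilon}(x)\leq V_{\epsilon}(x')$ by part (i) of Proposition \ref{L:Peskir}; the subtlety is that the optimal stopping times for the two starting points need not coincide, so I would argue more carefully by taking near-optimal $\tau$ for the $\sup$ at the lower point and near-optimal $\sigma$ for the $\inf$ at the upper point, or equivalently by working with the representation $V_*(x)=\sup_\tau\inf_\sigma M_x(\tau,\sigma)$ and pushing the pointwise bound through the $\inf$ and $\sup$ in the correct order. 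Concretely, for a fixed $\tau$ the map $\sigma\mapsto M_x(\tau,\sigma)$ dominates $\sigma\mapsto M_{x'}(\tau,\sigma)$ under the coupling only after accounting for the integral up to $\tau\wedge\sigma$, so I expect to take the inner infimum first, use that $\inf_\sigma M_x(\tau,\sigma)\leq \inf_\sigma M_{x'}(\tau,\sigma)$ (the infimum of a smaller family of functions at each $\sigma$), and then take the supremum over $\tau$.

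The hard part will be handling the stopping times correctly under the coupling: one must ensure that the inequality $c'(x+X_s)\leq c'(x'+X_s)$ transfers to the game values without the optimal stopping regions shifting in an uncontrolled way, and that integrability (guaranteed by $\E_x\int_0^\infty|c'(X_s)|e^{-\epsilon s}ds<\infty$ from Proposition \ref{L:Peskir}(i)) lets me interchange the pointwise bound with the expectation and the $\sup$–$\inf$ operations. I anticipate that the cleanest route is to fix an arbitrary admissible $\tau$, bound $\inf_\sigma M_x(\tau,\sigma)$ by $\inf_\sigma M_{x'}(\tau,\sigma)$ using that for each common $\sigma$ the coupled integrands satisfy the pointwise inequality, and then take $\sup_\tau$ on both sides to conclude $V_*(x)\leq V_*(x')$, which by the established equality $V_*=V^*=V_{\epsilon}$ yields that $V_{\epsilon}$ is non-decreasing.
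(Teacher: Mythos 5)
Your proof is correct, but it is not the paper's argument, and the ``subtlety'' you worry about (optimal stopping regions shifting between the two starting points) never actually arises in your approach. After coupling the two games through $X^{x'}_t=X^{x}_t+(x'-x)$, you obtain $M_x(\tau,\sigma)\le M_{x'}(\tau,\sigma)$ for \emph{every} fixed pair $(\tau,\sigma)$, since the $Q$-term is common to both and $c'(x+X_s)\le c'(x'+X_s)$ pointwise; monotonicity of the $\sup\inf$ operator under pointwise domination (if $f\le g$ pointwise then $\inf f\le\inf g$ --- note your parenthetical ``infimum of a smaller family'' misstates this, and the sentence claiming $M_x$ ``dominates'' $M_{x'}$ has the inequality backwards) then gives $V_\ast(x)\le V_\ast(x')$ with no reference to optimal times whatsoever, and Proposition \ref{L:Peskir}(i) finishes. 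The paper argues differently: it uses the Nash equilibrium of Proposition \ref{L:Peskir}(ii), bounding $V_{\epsilon}(x)-V_{\epsilon}(y)\le M_x(\tau^*,\sigma^*_{y-x})-M_y(\tau^*_{x-y},\sigma^*)$ by playing \emph{translated copies of the optimal hitting times} as suboptimal responses; translation invariance makes both payoffs involve the same pair $(\tau^*_x,\sigma^*_y)$ under $\P$, the $Q$-terms cancel exactly, and the difference collapses to $\E\int_0^{\tau^*_x\wedge\sigma^*_y}e^{-\epsilon s}\bigl(c'(x+X_s)-c'(y+X_s)\bigr)ds\le 0$. What each route buys: yours is more elementary --- it needs only the definition of $V_\ast$, the coupling, and $V_\ast=V^{\ast}$, not the existence of a saddle point --- but it yields only the inequality; the paper's choice of competing stopping times produces an exact integral representation of the increment of $V_{\epsilon}$, and that representation is precisely the template reused for the quantitative estimates in Proposition \ref{P:DynkinLipschitz} (Lipschitz continuity) and Proposition \ref{L:DynkinC1indense} (differentiability). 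One step you should make explicit: your coupling silently asserts that $V_{\epsilon}(x)$ equals $\sup_\tau\inf_\sigma\E\bigl(\int_0^{\tau\wedge\sigma}c'(x+X_s)e^{-\epsilon s}ds+Q(\tau,\sigma)\bigr)$ with $(\tau,\sigma)$ ranging over stopping times of the common filtration under $\P$. This spatial-translation invariance of the game value does hold and is of the same nature as the identity the paper establishes in \eqref{eq:ve}, but it is a statement about \emph{all} stopping times, whereas the paper's proof only ever needs translation invariance of hitting times of translated sets, which is immediate from the space invariance of L\'evy processes.
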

\begin{proof}
Take $x<y$. Observe first that, as $Q$ does not depend on $x$,
$$
\E_x Q(\tau^{\ast},\sigma^{\ast }_{y-x})
=\E_y Q(\tau^{\ast}_{x-y},\sigma^{\ast })=
\E Q(\tau^{\ast}_x,\sigma^{\ast }_{y}).
 $$
Then,
\begin{multline*}
V_{\epsilon}(x)-V_{\epsilon}(y)   \leq  M_x(\tau^*,\sigma^*_{y-x})-M_y(\tau^*_{x-y},\sigma^*)\\
=\E_x \left(I(\tau^{\ast} \wedge \sigma^{\ast }_{y-x})+ Q(\tau^{\ast},\sigma^{\ast }_{y-x})\right)
-\E_y\left(I(\tau^{\ast}_{x-y},\sigma^{\ast })+Q(\tau^{\ast}_{x-y},\sigma^{\ast })\right) \\
=\E \int_0^{\tau^{\ast}_x \wedge \sigma^{\ast}_y}e^{-\epsilon s} \big(c'(x+X_s)-c'(y+X_s)\big)ds  \leq 0,
\end{multline*}
by the convexity of $c$, concluding the proof.
\end{proof}
%%%%%%%%%%%%%%%%%%%%%%%%%%%%%%%%%%%%%%%%%%%%
%%%%%%%%%%%%%%%%%%%%%%%%%%%%%%%%%%%%%%%%%%%%
%%%%%%%%%%%%%%%%%%%%%%%%%%%%%%%%%%%%%%%%%%%%
%%%%%%%%%%%%%%%%%%%%%%%%%%%%%%%%%%%%%%%%%%%%
%%%%%%%%%%%%%%%%%%%%%%%%%%%%%%%%%%%%%%%%%%%%
\begin{proposition}\label{C:AdjoinDynkinVISINCREASING}
Define
$$
a^*_{\epsilon}=\sup\lbrace x\colon V_{\epsilon}(x)=-q_u\rbrace,
\qquad
b^*_{\epsilon}=\inf\lbrace x\colon V_{\epsilon}(x)=q_d\rbrace 
$$
that could take the values $a^*_{\epsilon}=-\infty, b^*_{\epsilon}=\infty$.
Then,
\vskip2mm\par\noindent
{\rm(i)} the stopping times \eqref{eq:tau} and \eqref{eq:sigma} satisfy:
$$
\aligned
\tau^*&=\inf\{t\geq 0\colon X_t\leq a^*_{\epsilon}\}=\inf\{t\geq 0\colon V_{\epsilon}(X_t)=-q_u\},\\
\sigma^*&=\inf\{t\geq 0\colon X_t\geq b^*_{\epsilon}\}=\inf\{t\geq 0\colon V_{\epsilon}(X_t)=q_d\}.
\endaligned
$$
In other words, each one of the sets $D_1, \ D_2 $ is a half line, possibly empty, and the value function satisfies
\begin{equation}\label{E:constant}
V_{\epsilon}(x)=
\begin{cases}
-q_u,&\text{for $x\leq a^*_{\epsilon},$}\\
\ \ q_d,&\text{for $x\geq b^*_{\epsilon}.$} 
\end{cases}
\end{equation}
\vskip2mm\par\noindent
{\rm(ii)} We have the strict inequalities $a^*_{\epsilon}<0<b^*_{\epsilon}$.
\vskip2mm\par\noindent
{\rm(iii)} Furthermore, for $\epsilon$ small enough in case $\alpha=0$, 
or in the case $\alpha>0$, the thresholds
$a^*_{\epsilon}$ and $b^*_{\epsilon}$ are finite.
\end{proposition}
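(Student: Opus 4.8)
The plan is to extract all three statements from two facts already in hand, namely the two-sided bound $-q_u\le V_{\epsilon}\le q_d$ and the monotonicity of $V_{\epsilon}$ (Proposition~\ref{P:AdjoinDynkinVISINCREASING}), together with the super/sub/martingale identities \eqref{first}--\eqref{martingale}. For (i) I would argue purely from monotonicity: since $V_{\epsilon}$ is non-decreasing and $-q_u=\min V_{\epsilon}$, the set $D_1=\{V_{\epsilon}=-q_u\}$ is stable under moving left, so if $V_{\epsilon}(x_0)=-q_u$ then $V_{\epsilon}(x)=-q_u$ for every $x\le x_0$; hence $D_1$ is a left half-line with supremum $a^*_{\epsilon}$, and symmetrically $D_2$ is a right half-line with infimum $b^*_{\epsilon}$. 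This gives \eqref{E:constant} at once, and since $\tau^*=\inf\{t:X_t\in D_1\}$ with $D_1$ equal to $(-\infty,a^*_{\epsilon}]$ up to its endpoint, $\tau^*$ is the first passage of $X$ below $a^*_{\epsilon}$; one checks that the endpoint lies in $D_1$ (equivalently, that passage across the open and the closed half-line coincide $\P_x$-a.s.) so that $\tau^*=\inf\{t:X_t\le a^*_{\epsilon}\}$, and likewise for $\sigma^*$. The second equalities are the definitions of $D_1,D_2$.

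For (ii) I would use \eqref{first}. Fix $x$ with $V_{\epsilon}(x)=-q_u$; then $V_{\epsilon}(x)\neq q_d$, so $x\notin D_2$ and $\sigma^*>0$ $\P_x$-a.s. Bounding $V_{\epsilon}(X_{\sigma^*\wedge t})\ge -q_u$ in \eqref{first} yields
\[
\E_x\int_0^{\sigma^*\wedge t}e^{-\epsilon s}c'(X_s)\,ds \le -q_u\,\E_x\bigl(1-e^{-\epsilon(\sigma^*\wedge t)}\bigr).
\]
Dividing by $t$ and letting $t\downarrow 0$ (using right-continuity of $X$, continuity of $c'$, the integrability \eqref{eq:cprima}, and $\sigma^*>0$) the left side tends to $c'(x)$ while the right side tends to $-q_u\epsilon$, so $c'(x)\le -q_u\epsilon$. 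Since $c=c_{\delta}$ is convex with minimum at $0$, $c'$ is non-decreasing with $c'(0)=0>-q_u\epsilon$; thus $\{V_{\epsilon}=-q_u\}\subseteq\{c'\le -q_u\epsilon\}$, a half-line bounded above by a strictly negative level, so $a^*_{\epsilon}<0$. The inequality $b^*_{\epsilon}>0$ follows identically from \eqref{second}, the bound $V_{\epsilon}\le q_d$, and $c'(0)=0<q_d\epsilon$.

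For (iii) I would argue by contradiction. Suppose $a^*_{\epsilon}=-\infty$, i.e.\ $D_1=\emptyset$; then the equilibrium rule is $\tau^*=\infty$ and the Nash property gives
\[
V_{\epsilon}(x)=M_x(\infty,\sigma^*)=\E_x\Bigl(\int_0^{\sigma^*}c'(X_s)e^{-\epsilon s}\,ds+q_d e^{-\epsilon\sigma^*}\mathbf{1}_{\{\sigma^*<\infty\}}\Bigr),
\]
where the second term lies in $[0,q_d]$. For the integral I split $c'=(c')^+-(c')^-$: since $c'$ is non-decreasing, by spatial homogeneity $(c')^+(X_s)$ started from $x\le 0$ is dominated by its value started from $0$, and $(c')^+\to 0$ as the starting point tends to $-\infty$, so the positive part vanishes as $x\to-\infty$ by dominated convergence (domination from \eqref{eq:cprima}). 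For the negative part I compare with the cost accrued before $X$ climbs to $x/2$: since $b^*_{\epsilon}>0$ by (ii), $\tau_0=\inf\{t:X_t\ge x/2\}\le \sigma^*$, and $c'(X_s)\le c'(x/2)$ on $\{s<\tau_0\}$ gives a contribution at most $c'(x/2)\,\E_x\!\int_0^{\tau_0}e^{-\epsilon s}ds$, where $\tau_0\to\infty$ from very negative $x$. When $\alpha>0$ the growth $c(x)+M\ge N|x|^{1+\alpha}$ forces $c'(x)\to-\infty$, so $V_{\epsilon}(x)\to-\infty$, contradicting $V_{\epsilon}\ge -q_u$. When $\alpha=0$ only the finite asymptotic slope $-\beta=\lim_{x\to-\infty}c'(x)$ (with $\beta\ge N>0$) is available and the bound tends to $-\beta/\epsilon+q_d$; choosing $\epsilon$ small enough that $\beta/\epsilon>q$ again contradicts $V_{\epsilon}\ge -q_u$. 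Hence $a^*_{\epsilon}>-\infty$, and symmetrically $b^*_{\epsilon}<\infty$.

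The main obstacle is (iii): parts (i) and (ii) are essentially soft, resting on monotonicity and a first-order expansion at $0$, whereas (iii) needs a genuine large-$x$ estimate of $M_x(\infty,\sigma^*)$ that separates the two regimes. The delicate points there are controlling the positive part of the running cost uniformly (the dominated-convergence argument built on \eqref{eq:cprima}), making rigorous the passage-time comparison that produces the factor $1/\epsilon$, and pinning down the exact smallness threshold for $\epsilon$ when $\alpha=0$, which is precisely where the dichotomy in the statement originates.
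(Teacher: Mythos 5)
Your part (i) coincides with the paper's own argument: monotonicity (Proposition \ref{P:AdjoinDynkinVISINCREASING}) plus the bounds $-q_u\le V_{\epsilon}\le q_d$ force $D_1,D_2$ to be half-lines, and \eqref{E:constant} follows; nothing further to say there.

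Part (ii) is where you diverge from the paper, and it contains the one step that can actually fail. You assert that $V_{\epsilon}(x)=-q_u$ implies $x\notin D_2$ and hence $\sigma^*>0$ $\P_x$-a.s. That implication is not justified at this stage: $V_{\epsilon}$ is so far only known to be measurable, monotone and bounded, so $D_2$ may be the \emph{open} half-line $(b^*_{\epsilon},\infty)$, and nothing you have established excludes the degenerate configuration $a^*_{\epsilon}=b^*_{\epsilon}=:z$ in which $V_{\epsilon}$ jumps from $-q_u$ to $q_d$ at $z$. In that configuration, for a process of unbounded variation (so that $0$ is regular for $(0,\infty)$), the point $x=z$ satisfies $V_{\epsilon}(x)=-q_u$, $x\notin D_2$, yet $\sigma^*=0$ $\P_x$-a.s., and your first-order expansion collapses to a tautology. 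This is precisely the point where the paper inserts the separate claim $a^*_{\epsilon}\neq b^*_{\epsilon}$ (deduced from $c'(0)=0$, $c'$ increasing and $G_1<G_2$) before proving strictness. Your argument is repairable: apply it only at points $x\in D_1$ with $x<b^*_{\epsilon}$ (there right-continuity of paths does give $\sigma^*>0$ a.s.), obtaining $c'\le -q_u\epsilon$ on $D_1\cap(-\infty,b^*_{\epsilon})$, and symmetrically $c'\ge q_d\epsilon$ on $D_2\cap(a^*_{\epsilon},\infty)$; continuity of $c'$ then makes $a^*_{\epsilon}=b^*_{\epsilon}$ contradictory and yields both strict inequalities. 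A second, lesser soft spot: the passage $t\downarrow 0$ inside $\E_x$ is not covered by \eqref{eq:cprima}, which controls $\E_x\int_0^\infty|c'(X_s)|e^{-\epsilon s}ds$ but gives no dominating function for $\sup_{s\le t}|c'(X_s)|$; you must first localize at the exit time of a bounded interval (where $c'$ is bounded), which requires the supermartingale property behind \eqref{first} at stopping times --- the strengthening the paper notes in Proposition \ref{L:Peskir} and exploits in exactly this way in Lemma \ref{L:HarmonicityPrimitiveDynkin}. Note that the paper's own proof of (ii) sidesteps both issues: it plugs the explicit test time $\sigma=\inf\lbrace t\ge0\colon |c'(X_t)|\ge q_d\epsilon\rbrace$ into the Nash inequality $M(\tau^*,\sigma^*)\le M(\tau^*,\sigma)$, so the relevant integrand $|c'(X_s)|-q_d\epsilon$ is nonpositive and bounded by construction, and concludes $V_{\epsilon}(0)<q_d$ directly.

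Part (iii) is a genuinely different route from the paper's and, as far as I can see, sound: assuming $D_1=\emptyset$ you use the Nash representation $V_{\epsilon}(x)=M_x(\infty,\sigma^*)$, kill the positive part of $c'$ by dominated convergence (here \eqref{eq:cprima} really does dominate, uniformly in the translation parameter $x$), and make the negative part produce $c'(x/2)\,\E_x\bigl(\frac{1-e^{-\epsilon\tau_0}}{\epsilon}\bigr)\to -\beta/\epsilon$, contradicting $V_{\epsilon}\ge -q_u$; this also isolates a clean threshold $\epsilon<N/q$ when $\alpha=0$. The paper instead proves the stronger Proposition \ref{P:BOUNDSAB}: it applies the identity \eqref{martingale} at the exit time $\gamma^{\ell}$ of an interval around an interior point $\ell$ and contradicts $q_d>V_{\overline{\epsilon}}(\ell)\ge q_d$ via \eqref{E:boundDynkin0}--\eqref{E:boundDynkin2}. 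What the paper's version buys is a bound $[-L,L]$ \emph{uniform} in $\overline{\epsilon}\le\epsilon$ (and, later, in $\delta$), which is needed in the proofs of Theorems \ref{T:DISCOUNTEDPROBLEMSOLUTION} and \ref{T:ErgodicProblemsolution}; your argument yields finiteness only for the fixed $\epsilon$, which suffices for statement (iii) itself but could not replace Proposition \ref{P:BOUNDSAB} in the rest of the paper.
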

%%%%%%%%%%%%%%%%%%%%%%%%%%%%%%%%%%%%%%%%%%%%%%%%%%%%%%%%%%%%%%%%%%%%%%%
%%%%%%%%%%%%%%%%%%%%%%%%%%%%%%%%%%%%%%%%%%%%%%%%%%%%%%%%%%%%%%%%%%%%%%%
%%%%%%%%%%%%%%%%%%%%%%%%%%%%%%%%%%%%%%%%%%%%%%%%%%%%%%%%%%%%%%%%%%%%%%%
\begin{proof}[Proof of {\rm(i)}]
The claim follows from the fact that $V_{\epsilon}$ is increasing, $-q_u \leq V_{\epsilon}(x) \leq q_d \ \text{ for all } x \in  \mathbb{R}$ and the definition of $\tau^{\ast}$ and $\sigma^{\ast}$.  
\vskip1mm\par\noindent
\textit{Proof of {\rm(ii)}}
The fact that $a_{\epsilon}^{\ast} \leq 0 \leq b^{\ast}_{\epsilon}$ and $a_{\epsilon}^{\ast} \neq b^{\ast}_{\epsilon}$ is deduced from the fact that $c'(x)$ is increasing, $c'(0)=0$ and $G_1 < G_2$.  

Now, assume $\P(\tau^{\ast}>0)=1$ and observe for all $\sigma \in \Re $:
\begin{align*}
M(\tau^{\ast},\sigma) 
&=\E \left(\int_0^{\tau^{\ast}\wedge \sigma }c'(X_s)e^{-\epsilon s}ds -q_u \mathbf{1}_{\lbrace \tau^\ast < \sigma \rbrace}e^{-\epsilon \tau^\ast}+q_d \mathbf{1}_{\lbrace \tau^\ast \geq \sigma \rbrace}e^{-\epsilon \sigma}  \right) \nonumber \\
&\leq \E\left( \int_0^{\sigma} \vert c'(X_s)\vert e^{-\epsilon s}ds +q_d \mathbf{1}_{\lbrace \tau^\ast \geq \sigma \rbrace}e^{-\epsilon \sigma}\right) \nonumber \\
&= \E\left( \int_0^{\sigma} \vert c'(X_s)\vert e^{-\epsilon s}ds +q_de^{-\epsilon \sigma}\right).
\end{align*}
Define $\sigma=\inf\lbrace t \geq 0\colon \vert c'(X_t) \vert \geq q_d \epsilon \rbrace $, 
which is strictly positive almost surely because $c'(0)=0$ and $X$ is right continuous. 
For this choice of $\sigma$, the expectation $\E \int_0^{\sigma}\left( \vert c'(X_s)\vert-q_d \epsilon \right)ds <0$, so $M(\tau^{\ast},\sigma^{\ast})\leq M(\tau^{\ast},\sigma)<q_d$. This implies that $\sigma^{\ast}$ is not identically zero, and consequently $b^{\ast}_{\epsilon}>0$. A similar argument shows that $a^{\ast}_{\epsilon}<0$.
Statement (iii) is a consequence of the following  Proposition \ref{P:BOUNDSAB}.
\end{proof} 
%%%%%%%%%%%%%%%%%%%%%%%%%%%%%%%%%%%%%%%%%%%%%%%%%%%%%%%%%%%%%%%%%%%%%%%
%%%%%%%%%%%%%%%%%%%%%%%%%%%%%%%%%%%%%%%%%%%%%%%%%%%%%%%%%%%%%%%%%%%%%%%
%%%%%%%%%%%%%%%%%%%%%%%%%%%%%%%%%%%%%%%%%%%%%%%%%%%%%%%%%%%%%%%%%%%%%%%
The following proposition provides bounds for $(a^{\ast}_{\epsilon},b^{\ast}_{\epsilon})$,
that, despite not being optimal, 
will provide 
bounds for the optimal barriers of a Dynkin game with parameter $\overline{\epsilon}$ such that $0<\overline{\epsilon} \leq \epsilon$, as will be seen below.
Define now, for $\ell>0$, the stopping time
$\gamma^{\ell}$ as
\begin{equation*}
\gamma^{\ell}=\inf \left\lbrace t\geq 0\colon  \left\vert X_t-X_0 \right\vert \geq \frac{\ell}{2} \right\rbrace.     
\end{equation*}
\begin{proposition}\label{P:BOUNDSAB}
  With the notation of Definition \ref{D:costfunction}, if $\alpha>0$ or if $\epsilon$ is small enough in case $\alpha=0$, there is a constant $L$ such that the optimal thresholds of the $\overline{\epsilon} $-Dynking game  $(a^{\ast}_{\overline{\epsilon} },b^{\ast }_{\overline{\epsilon} }) \subset[-L,L]$  for every $\overline{\epsilon}  \leq \epsilon$. %
\end{proposition}
\begin{proof}
First we assume $\alpha=0$. From Definition \ref{D:costfunction}, for $\ell$ large enough, for all $x\geq \ell/2$ we have, 
$
c'(x)\geq N/2.
$
Then for all $\overline{\epsilon} >0$:
\begin{equation*} %\label{E:boundDynkin0}
\E_{\ell} \int_0^{\gamma^\ell}c'(X_s)e^{-\overline{\epsilon}  s}ds 
\geq \frac{N}{2}\E_{\ell} \left( \frac{1-e^{-\overline{\epsilon}  \gamma^{\ell}}}{\overline{\epsilon} } \right).
\end{equation*}
On the other hand as
$$
\E_{\ell}\gamma^\ell=\E \inf\left\lbrace t\geq 0\colon |X_t|\geq \frac{\ell}{2} \right\rbrace \to\infty\text{ as $\ell\to\infty$},
$$
we find an $\ell$ s.t.
\begin{equation*}\label{E:boundDynkin1}
\E_{\ell} \gamma^{\ell}  >\frac{2}{N}(q_u+q_d).
\end{equation*}
For a fixed $\ell$ satisfying \eqref{E:boundDynkin1}, using dominated convergence and the fact $\E_{\ell}(\gamma^{\ell})< \infty$ when $X$ is not the null process, we have that 

$$ \E_{\ell} \left( \frac{1-e^{-\overline{\epsilon}  \gamma^{\ell}}}{\overline{\epsilon} } \right) \nearrow  \E_{ \ell}\gamma^{\ell} \ \text{ as} \ \overline{\epsilon}  \to 0 .$$
Thus, using \eqref{E:boundDynkin1} we can take $\epsilon$ small enough such that for every $\overline{\epsilon}  \leq \epsilon$ we have
\begin{equation}\label{E:boundDynkin0}
\E_{\ell} \int_0^{\gamma^\ell}c'(X_s)e^{-\overline{\epsilon}  s}ds 
\geq q_u+q_d.
\end{equation}
We now take $L:=3 \ell/2 $ and prove $b^{\ast}_{\overline{\epsilon} }\leq L$ for every 
$\overline{\epsilon}  \leq \epsilon$. 
Assume, by contradiction, that $b^{\ast}_{\overline{\epsilon} }>L$, what implies $q_d>V_{\overline{\epsilon}}(\ell)$ (the function $V_{\overline{\epsilon}}$ is the value of the $\overline{\epsilon} $-Dynkin game).  
Now, by \eqref{martingale},
and using that $V_{\overline{\epsilon}}(x)\geq -q_u$ we have 
\begin{multline}\label{E:boundDynkin2}
q_d > V_{\overline{\epsilon}}(\ell)
=\E_{\ell} \left(  \int_0^{\gamma^\ell}c'(X_s)e^{-\overline{\epsilon}  s}ds   
+e^{-\overline{\epsilon}  \gamma^\ell}V_{\overline{\epsilon}}(X_{\gamma^\ell})\right)\\
\geq  \E_{\ell}    \int_0^{\gamma^\ell}  c'(X_s)e^{-\overline{\epsilon}  s}ds  -q_u \geq q_d.
\end{multline}
by \eqref{E:boundDynkin0}, what is a contradiction. 
The other bound is analogous (thus $L$ must be taken as the one with the greater absolute value), concluding the proof of the first statement when $\alpha=0$. 

Assume now $\alpha>0$. We follow a similar approach. Instead of taking $\gamma^{\ell}$ we consider $\gamma^1$. Observe that if $\ell \geq$ 1 we have for every $\overline{\epsilon}\leq \epsilon$:
\begin{multline}\label{E:boundDynkin3} %\label{E:boundDynkin0}
\E_{\ell} \int_0^{\gamma^1}c'(X_s)e^{-\overline{\epsilon}s}ds 
\geq c'(\ell-1)\E_{\ell} \left( \frac{1-e^{-\overline{\epsilon}  \gamma^{1}}}{\overline{\epsilon} } \right) 
\\ \geq 
c'(\ell-1)
\E \left( \frac{1-e^{-\epsilon  \gamma^{1}}}{\epsilon } \right) \geq q_u+q_d,
\end{multline}
for $\ell$ big enough. and we take $\gamma^1$ instead of $\gamma^{\ell}$, equation \eqref{E:boundDynkin0} is satisfied (with $\ell$ as starting point). Now, if take $L:= 3\ell/2$ again, we arrive to the same contradiction as in equation \eqref{E:boundDynkin2}.

\end{proof}

From now on, we assume $\epsilon$ is small enough in case $\alpha=0$ or $\alpha>0$ so that Proposition \ref{P:BOUNDSAB} holds.

\begin{proposition}\label{P:DynkinLipschitz}
The value function $V_{\epsilon}$ is Lipschitz continuous.
\end{proposition}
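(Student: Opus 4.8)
The plan is to re-run the coupling argument from the proof of Proposition \ref{P:AdjoinDynkinVISINCREASING}, but now to extract a quantitative (two-sided) control on the increment of $V_\epsilon$. Fix $x<y$ with $0<y-x\le 1$. Using the saddle-point inequalities \eqref{eq:nash} at the two starting points, keep the sup-player's choice $\tau^*$ fixed at $y$ and the inf-player's choice $\sigma^*$ fixed at $x$, while giving the complementary player a suitably translated hitting time. Concretely, from $V_\epsilon(y)\le M_y(\tau^*,\sigma)$ with $\sigma=\sigma^*_{x-y}$ and from $V_\epsilon(x)\ge M_x(\tau,\sigma^*)$ with $\tau=\tau^*_{y-x}$, one gets
\[
V_\epsilon(y)-V_\epsilon(x)\ \le\ M_y(\tau^*,\sigma^*_{x-y})-M_x(\tau^*_{y-x},\sigma^*).
\]
The point of these particular translations is that, as in the monotonicity proof, both payoffs reduce under $\P$ (the process started at $0$) to the same pair of stopping times, namely $\tau^*_y=\inf\{t:X_t\le a^*_\epsilon-y\}$ and $\sigma^*_x=\inf\{t:X_t\ge b^*_\epsilon-x\}$, so that $y+X_t$ hits $a^*_\epsilon$ at $\tau^*_y$ and $x+X_t$ hits $b^*_\epsilon$ at $\sigma^*_x$.

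Since $Q$ depends only on the stopping times and not on the state of $X$, space invariance gives the cancellation $\E_y Q(\tau^*,\sigma^*_{x-y})=\E\, Q(\tau^*_y,\sigma^*_x)=\E_x Q(\tau^*_{y-x},\sigma^*)$, exactly as in Proposition \ref{P:AdjoinDynkinVISINCREASING}. Only the running-cost difference survives, leaving
\[
V_\epsilon(y)-V_\epsilon(x)\ \le\ \E\int_0^{\tau^*_y\wedge\sigma^*_x}e^{-\epsilon s}\big(c'(y+X_s)-c'(x+X_s)\big)\,ds .
\]

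The decisive step is to bound the integrand uniformly. For every $s<\tau^*_y\wedge\sigma^*_x$ one has, by definition of these first entrance times, $y+X_s>a^*_\epsilon$ and $x+X_s<b^*_\epsilon$; combined with $0<y-x\le 1$ this forces both arguments $x+X_s$ and $y+X_s$ into the \emph{fixed} compact interval $K=[a^*_\epsilon-1,\,b^*_\epsilon+1]$, which is genuinely bounded because the thresholds are finite by Proposition \ref{P:BOUNDSAB}. As throughout this section $c=c_\delta\in C^2(\R)$, the derivative $c'$ is Lipschitz on $K$ with constant $C:=\max_{z\in K}|c''(z)|$, so $c'(y+X_s)-c'(x+X_s)\le C\,(y-x)$ on the whole integration window. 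Using $\E\int_0^{\infty}e^{-\epsilon s}\,ds=1/\epsilon$ then yields
\[
V_\epsilon(y)-V_\epsilon(x)\ \le\ \frac{C}{\epsilon}\,(y-x),\qquad 0<y-x\le 1 .
\]
The restriction $y-x\le 1$ is removed by concatenation: any interval splits into pieces of length at most $1$, for which the same $K$ (hence the same $C$) works, and summing the increments gives the global estimate $|V_\epsilon(y)-V_\epsilon(x)|\le (C/\epsilon)\,|y-x|$, establishing Lipschitz continuity.

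I expect the main obstacle to be the simultaneous bookkeeping in the first two steps: one must pick the four stopping times so that, after translating to the canonical process started at $0$, the two payoffs share exactly the pair $(\tau^*_y,\sigma^*_x)$ (so that the barrier costs $q_u,q_d$ cancel identically), while at the same time this shared window keeps both shifted paths trapped between the barriers. Once the integrand is localized to the fixed compact $K$, the upgrade from mere convexity of $c$ to a genuine linear-in-$(y-x)$ estimate is precisely what the regularity $c_\delta\in C^2$ provides, and the remaining bound is routine.
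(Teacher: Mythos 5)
Your proof is correct and follows essentially the same route as the paper's: the same Nash-inequality coupling with translated stopping times $(\tau^*_{y-x},\sigma^*_{x-y})$, the same cancellation of the $Q$-terms under space invariance, and the same mean-value bound via $c''$ on a compact interval determined by the finite thresholds. The only differences are in the closing bookkeeping: you integrate the discount factor to get the constant $C/\epsilon$ and telescope over increments of length at most one, whereas the paper restricts to $x,y\in[a^*_\epsilon,b^*_\epsilon]$ (using that $V_\epsilon$ is constant outside this interval) and bounds $e^{-\epsilon s}\le 1$, closing instead with the finiteness of the expected exit time of a bounded interval.
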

\begin{proof}
In view of \eqref{E:constant} we need to consider the pairs $x,y$ s.t. $a^*_{\epsilon}\leq x<y\leq b^*_{\epsilon}$. 
As in the proof of Proposition \ref{P:AdjoinDynkinVISINCREASING}, we have,
\begin{multline*}
0 \leq V_{\epsilon}(y)-V_{\epsilon}(x) \leq M_y(\tau^{\ast},\sigma^{\ast}_{x-y})- M_x(\tau^{\ast}_{y-x},\sigma^{\ast} )= \\ 
\E \left( \int_0^{ \tau^{\ast}_{y} \wedge \sigma^{\ast}_{x}} e^{-\epsilon s}( c'(X_{s} +y)-c'(X_{s} +x) ) ds \right)
\\
\leq 
 \sup_{z \in [a^{\ast}-b^{\ast},b^{\ast}-a^{\ast}]} \vert c''(z) \vert    \vert y-x \vert \ \E  (\tau^{\ast}_{y} \wedge \sigma^{\ast}_{x}) .
\end{multline*}
We conclude the proof using the fact that 
$\tau^{\ast}_{y} \wedge \sigma^{\ast}_{x}$ is bounded by
the stopping time
\[ \inf \lbrace t\geq 0\colon X_t \notin (a^{\ast}_{\epsilon}-b^{\ast}_{\epsilon},b^{\ast}_{\epsilon}-a^{\ast}_{\epsilon})\rbrace , 
\]
that has finite expectation.
%(see  \eqref{L:Escapetimeofintervalisbounded} in the Appendix).
\end{proof}
%%%%%%%%%%%%%%%%%%%%%%%%%%%%%%%%%%%%%%%%%%%%%%%%%%%%%%%%%%%%%%%%
%%%%%%%%%%%%%%%%%%%%%%%%%%%%%%%%%%%%%%%%%%%%%%%%%%%%%%%%%%%%%%%%
%%%%%%%%%%%%%%%%%%%%%%%%%%%%%%%%%%%%%%%%%%%%%%%%%%%%%%%%%%%%%%%%
When the process $X$ has unbounded variation it is possible to prove the continuous differentiability of the value function $V_{\epsilon}$. 
This will be necessary, in this situation, to construct a candidate that verifies condition (i) of Theorem \ref{T:Verification}.
%Again $\epsilon>0$ is fixed.
%
 
\begin{proposition}\label{L:DynkinC1indense}
If the L\'evy process $X$ has unbounded variation, the function $V_{\epsilon}$ is differentiable, and
$$
V'_{\epsilon}(x)=\E\int_0^{\tau^*_x\wedge\sigma^*_x}c''(x+X_s)e^{-\epsilon s}\,ds.
$$
Furthermore, $V'_{\epsilon}$ is continuous.% in $(a,b)$.
\end{proposition}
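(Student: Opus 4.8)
The plan is to turn the saddle-point estimate already used in the proof of Proposition \ref{P:DynkinLipschitz} into a two-sided bound on the increment of $V_{\epsilon}$ and then differentiate under the expectation. Working under $\P=\P_0$ and writing
\[
M(x;\tau,\sigma)=\E\left(\int_0^{\tau\wedge\sigma}c'(x+X_s)e^{-\epsilon s}\,ds+Q(\tau,\sigma)\right),
\]
with $Q(\tau,\sigma)=q_d\mathbf{1}_{\{\tau\geq\sigma\}}e^{-\epsilon\sigma}-q_u\mathbf{1}_{\{\tau<\sigma\}}e^{-\epsilon\tau}$ independent of $x$, the Nash property of the optimal pairs $(\tau^*_x,\sigma^*_x)$ and $(\tau^*_y,\sigma^*_y)$ gives, for $x<y$, the lower bound $V_{\epsilon}(y)\geq M(y;\tau^*_x,\sigma^*_y)$ together with $V_{\epsilon}(x)\leq M(x;\tau^*_x,\sigma^*_y)$, and the upper bound $V_{\epsilon}(y)\leq M(y;\tau^*_y,\sigma^*_x)$ together with $V_{\epsilon}(x)\geq M(x;\tau^*_y,\sigma^*_x)$. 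Since each bracket compares two functionals evaluated at the \emph{same} stopping times, the $Q$-terms cancel, leaving the sandwich
\[
\E\int_0^{\tau^*_x\wedge\sigma^*_y}\!\!\big(c'(y+X_s)-c'(x+X_s)\big)e^{-\epsilon s}ds\;\leq\;V_{\epsilon}(y)-V_{\epsilon}(x)\;\leq\;\E\int_0^{\tau^*_y\wedge\sigma^*_x}\!\!\big(c'(y+X_s)-c'(x+X_s)\big)e^{-\epsilon s}ds.
\]

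Next I would divide by $y-x>0$ and let $y\downarrow x$ to get the right derivative, the left derivative being symmetric (exchanging the roles of $x$ and $y$). As $c\in C^2$, the mean value theorem gives $\big(c'(y+X_s)-c'(x+X_s)\big)/(y-x)=c''(\xi_s+X_s)$ with $\xi_s$ between $x$ and $y$, and on the stopping region $x+X_s\in(a^*_{\epsilon},b^*_{\epsilon})$, so this quotient is bounded by $\sup_{[a^*_{\epsilon}-1,\,b^*_{\epsilon}+1]}|c''|$. Writing each side as $\int_0^{\infty}(\cdots)\mathbf{1}_{\{s<\,\cdot\,\}}ds$ and applying dominated convergence on $\Omega\times[0,\infty)$ with dominating function a constant times $e^{-\epsilon s}$, both bounds converge to the common limit $\E\int_0^{\tau^*_x\wedge\sigma^*_x}c''(x+X_s)e^{-\epsilon s}ds$, \emph{provided} the translated passage times satisfy $\tau^*_y\to\tau^*_x$ and $\sigma^*_y\to\sigma^*_x$ almost surely as $y\to x$. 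This produces differentiability together with the stated formula.

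The main obstacle, and the only place the unbounded variation hypothesis is used, is precisely this convergence of the first passage times in the level. With $\tau^*_y=\inf\{t:X_t\leq a^*_{\epsilon}-y\}$ and $\sigma^*_y=\inf\{t:X_t\geq b^*_{\epsilon}-y\}$, as $y\to x$ one of the two families converges a priori only to the passage time of the corresponding \emph{open} half-line, which differs from the closed one exactly by the regularity of that level for the half-line. Here I would invoke the classical fact that a L\'evy process of unbounded variation is regular for both $(0,\infty)$ and $(-\infty,0)$ (see \cite{B}, \cite{KRI}); by spatial homogeneity every level is regular for both sides, so the open and closed passage times coincide almost surely and $\tau^*_y\to\tau^*_x$, $\sigma^*_y\to\sigma^*_x$ a.s. For the other (monotone) family the limit is the closed passage time automatically, since on a compact time interval the running supremum and infimum of the c\`adl\`ag path are attained.

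Finally, continuity of $V'_{\epsilon}$ follows from the same ingredients applied at the limit: as $x\to x_0$ one has $\tau^*_x\wedge\sigma^*_x\to\tau^*_{x_0}\wedge\sigma^*_{x_0}$ a.s.\ by regularity, $c''(x+X_s)\to c''(x_0+X_s)$ by continuity of $c''$, and dominated convergence (with the bound $\sup|c''|$ on a fixed compact and the finite expectation of the exit time of $(a^*_{\epsilon}-b^*_{\epsilon},b^*_{\epsilon}-a^*_{\epsilon})$, as in Proposition \ref{P:DynkinLipschitz}) yields $V'_{\epsilon}(x)\to V'_{\epsilon}(x_0)$.
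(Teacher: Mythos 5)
Your proposal follows essentially the same route as the paper's proof: the same sandwich obtained from the Nash property by playing each player's optimal time for one game against the translated optimal time of the other (so that the $Q$-terms cancel), then the mean value theorem and dominated convergence, with everything reduced to the a.s.\ convergence of the translated passage times, and with regularity of $0$ for the half-lines (valid in the unbounded variation case, cf.\ \cite{alili}) invoked at exactly the same point. The only substantive differences are cosmetic: you dominate by a constant multiple of $e^{-\epsilon s}$ on $\Omega\times[0,\infty)$, whereas the paper dominates the stopping times by the exit time of $(a^{\ast}_{\epsilon}-b^{\ast}_{\epsilon},\,b^{\ast}_{\epsilon}-a^{\ast}_{\epsilon})$, which has finite mean; both are legitimate.

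There is, however, one step whose justification as written is wrong: your treatment of the monotone \emph{increasing} family of passage times. You claim that if, say, $\sigma^*_y=\inf\{t\colon X_t\geq b^*_\epsilon-y\}$ increases to $L$ as $y\downarrow x$, then $L=\sigma^*_x$ ``automatically, since on a compact time interval the running supremum \ldots of the c\`adl\`ag path [is] attained.'' That is false for c\`adl\`ag paths in general: the path $f(t)=t$ for $t<1$, $f(t)=0$ for $t\in[1,2]$ has supremum $1=f(1^-)$ on $[0,2]$, attained neither by $f$ nor ever reached, and L\'evy paths of drift-plus-downward-jumps type exhibit exactly this behaviour with positive probability. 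The scenario you must exclude is precisely that $X$ approaches the level $b^*_\epsilon-x$ from below, has left limit equal to the level at time $L$, and jumps down at $L$ without ever entering $[b^*_\epsilon-x,\infty)$; a purely deterministic path argument cannot rule this out. What does rule it out is probabilistic: the $\sigma^*_y$ are stopping times increasing to $L$, so quasi-left-continuity of the L\'evy process (see \cite{B}) gives $X_{\sigma^*_{y_n}}\to X_L$ a.s.\ on $\{L<\infty\}$ along any sequence $y_n\downarrow x$, whence $X_L\geq\lim_n(b^*_\epsilon-y_n)=b^*_\epsilon-x$ and therefore $\sigma^*_x\leq L$. Alternatively, one can repeat for this family the strong Markov argument you and the paper use for the decreasing family, now with regularity of $0$ for $(0,\infty)$ (which also holds under unbounded variation); this is presumably what the paper's ``similar arguments apply in the other 3 cases'' refers to. With that one step repaired, your proof is correct and coincides with the paper's.
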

\begin{proof} Take $x\in\R$ and $h>0$. We obtain the bound
$$
\aligned
V_{\epsilon}(x+h)  &\leq \E_{x+h}\left(\int_0^{\tau^*\wedge\sigma^*_{-h}}c'(X_s)e^{-\epsilon s}\,ds+Q(\tau^*,\sigma^*_{-h})\right)\\
        &=\E\left(\int_0^{\tau^*_{x+h}\wedge\sigma^*_{x}}c'(x+X_s+h)e^{-\epsilon s}\,ds+Q(\tau^*_{x+h},\sigma^*_{x})\right).\\
\endaligned
$$
Similarly
$$
\aligned
V_{\epsilon}(x)  &\geq \E_{x}\left(\int_0^{\tau^*_h\wedge\sigma^*}c'(X_s)e^{-\epsilon s}\,ds+Q(\tau^*_h,\sigma^*)\right)\\
        &=\E\left(\int_0^{\tau^*_{x+h}\wedge\sigma^*_{x}}c'(x+X_s)e^{-\epsilon s}\,ds+Q(\tau^*_{x+h},\sigma^*_{x})\right).\\
\endaligned
$$
Subtracting, and applying the mean value theorem, we get
$$
{V_{\epsilon}(x+h)-V_{\epsilon}(x)\over h}\leq \E \int_0^{\tau^*_{x+h}\wedge\sigma^*_{x}}c''(x+X_s+\theta h)e^{-\epsilon s}\,ds ,
$$
where $0\leq\theta\leq 1$. 
%As $X$ has unbounded variation, we obtain $\tau^*_{x+h}\uparrow\tau^*_{x}$. 
Furthermore, If we assume  
\begin{equation}\label{E:Contdifferentiablelimittimes}
   \lim_{h \to 0} \E \vert \tau^*_{x+h} \wedge \sigma^{\ast}_x - \tau^{\ast}_x \wedge \sigma^{\ast}_x  \vert =0, 
\end{equation}    
then using the fact that
$\E(\tau^*\wedge\sigma^*)<\infty$,
$c''(x)$ is continuous and bounded in compacts, by dominated convergence
$$
\limsup_{h\downarrow0}{V_{\epsilon}(x+h)-V_{\epsilon}(x)\over h}\leq \E\int_0^{\tau^*_{x}\wedge\sigma^*_{x}}c''(x+X_s)e^{-\epsilon s}\,ds .
$$
To prove \eqref{E:Contdifferentiablelimittimes}, we analyze when
\begin{equation}\label{E:tauconvergence}
\tau^{\ast}_{x+h}\wedge \sigma^{\ast}_x  \downarrow\tau^{\ast}_{x}\wedge \sigma^{\ast}_x  , \ a.s.\ \text{ as $h\downarrow 0$.}
\end{equation}
First observe that $\tau^{\ast}_{x+h}$ decreases when $h$ decreases.
Now, for $\alpha>0$, denote $I_\alpha=\inf_{0\leq t\leq \alpha}X_t$.
By the strong Markov property, we have
$$
\P(\tau^*_{x+h} -\tau^*_{x}>\alpha)
        \leq\P (I_{\alpha} >-h ),  
$$
then
$$
\lim_{h \downarrow 0} \P(\tau^*_{x+h} -\tau^*_{x}>\alpha)
        \leq \lim_{h \downarrow 0} \P (I_{\alpha} >-h )=\P(I_{\alpha}=0).  
$$

We deduce that \eqref{E:tauconvergence} holds when the random variable $I_\alpha$ has no atoms,
i.e. if and only if 0 is regular for $(-\infty,0)$.
This is the situation when the process has unbounded variation (see \cite[Prop. 7]{alili}).
 We conclude that \eqref{E:Contdifferentiablelimittimes} holds using \eqref{E:tauconvergence} and the fact that, for $h$ small enough, the times in the sequence are dominated by the first exit time of the set $(a^{\ast}-b^{\ast},b^{\ast}-a^{\ast}) $.

Similar arguments apply in the other 3 cases: when $h>0$ to obtain lower bounds, and the other two situations for $h<0$.  The continuity follows taking limits under dominated convergence, with similar arguments as above. 
\end{proof}
%%%%%%%%%%%%%%%%%%%%%%%%%%%%%%%%%%%%%%%%%%%%%%%%%%%%%%%%%%%%%%%%%%%%%%%%%%%%%%%%%%%%%%%%%%%%%%%%%%%%%%%
%%%%%%%%%%%%%%%%%%%%%%%%%%%%%%%%%%%%%%%%%%%%%%%%%%%%%%%%%%%%%%%%%%%%%%%%%%%%%%%%%%%%%%%%%%%%%%%%%%%%%%%
\section{Construction of optimal controls and proof of main results}\label{S:Candidate}
In this section we first give some preliminary results and then provide the proofs of Theorems \ref{T:DISCOUNTEDPROBLEMSOLUTION} and \ref{T:ErgodicProblemsolution}.

\subsection{Properties of the primitive of the value function}
Using properties of the value function $V_{\epsilon}$ of the $\epsilon$-Dynkin game introduced in the previous section, 
we study properties of the indefinite integral or primitive function, i.e. a function $W$ such that, $W'=V_{\epsilon}$. 
As a direct consequence of Proposition  \ref{P:technicalinfinitesimal} 
(in both cases, with and without bounded variation) follows the next result.

\begin{proposition}\label{P:DomainInfinitesimalgenerator}
If $W$ is a primitive function of the value  $V_{\epsilon}$  of the $\epsilon$-Dynkin game in Definition \ref{def:DG}, 
then $W$ is in the domain of the infinitesimal generator $\mathcal{L}$, 
and the function 
$x \rightarrow \mathcal{L}W(x)$ is continuous.
\end{proposition}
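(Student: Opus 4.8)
The plan is to verify directly that $W$ meets the hypotheses of Proposition \ref{P:technicalinfinitesimal} with the interval taken to be $[a^*_{\epsilon},b^*_{\epsilon}]$, and then simply invoke that proposition. The two ingredients needed are (a) that $W$ is linear outside a bounded interval, and (b) that $W$ has the right order of smoothness ($C^2$ in the unbounded variation case, $C^1$ in the bounded variation case).

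First I would establish the linearity. By Proposition \ref{C:AdjoinDynkinVISINCREASING}, under the standing assumption on $\epsilon$ (or $\alpha>0$) the thresholds $a^*_{\epsilon}<0<b^*_{\epsilon}$ are finite, and by \eqref{E:constant} the value function satisfies $V_{\epsilon}(x)=-q_u$ for $x\leq a^*_{\epsilon}$ and $V_{\epsilon}(x)=q_d$ for $x\geq b^*_{\epsilon}$. Since $W'=V_{\epsilon}$, this means $W'$ is constant on each of the two complementary half-lines. Using the continuity of $V_{\epsilon}$ (Proposition \ref{P:DynkinLipschitz}) at the endpoints, these constants coincide with $W'(a^*_{\epsilon})=V_{\epsilon}(a^*_{\epsilon})=-q_u$ and $W'(b^*_{\epsilon})=V_{\epsilon}(b^*_{\epsilon})=q_d$, so $W$ is linear outside $[a^*_{\epsilon},b^*_{\epsilon}]$ exactly in the sense of the relevant definition.

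Next I would supply the smoothness. In the bounded variation case, Proposition \ref{P:technicalinfinitesimal}(ii) requires only $W\in C^1(\mathbb{R})$; this follows because $W'=V_{\epsilon}$ is Lipschitz continuous by Proposition \ref{P:DynkinLipschitz}, hence continuous. In the unbounded variation case, Proposition \ref{P:technicalinfinitesimal}(i) requires $W\in C^2(\mathbb{R})$; this follows because Proposition \ref{L:DynkinC1indense} shows that $V_{\epsilon}$ is differentiable with continuous derivative, so $V_{\epsilon}\in C^1(\mathbb{R})$ and therefore $W\in C^2(\mathbb{R})$.

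With both conditions verified, applying Proposition \ref{P:technicalinfinitesimal} in the appropriate case yields that $W$ is in the domain of $\mathcal{L}$ and that $x\mapsto\mathcal{L}W(x)$ is continuous, which is the claim. I do not expect a genuine obstacle here, since every needed property has been assembled in the preceding propositions; the one point requiring care is the precise matching of the derivative values at $a^*_{\epsilon}$ and $b^*_{\epsilon}$ so that the formal definition of ``linear outside an interval'' is met, but this is immediate from the continuity of $V_{\epsilon}$.
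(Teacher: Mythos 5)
Your proof is correct and follows essentially the same route as the paper, which presents this proposition as a ``direct consequence'' of Proposition \ref{P:technicalinfinitesimal}: the needed ingredients are exactly the ones you assemble, namely the constancy of $V_{\epsilon}$ outside the finite interval $[a^{\ast}_{\epsilon},b^{\ast}_{\epsilon}]$ from \eqref{E:constant} and Proposition \ref{C:AdjoinDynkinVISINCREASING} (giving linearity of $W$ off that interval), Lipschitz continuity of $V_{\epsilon}$ (Proposition \ref{P:DynkinLipschitz}) for the bounded variation case, and continuous differentiability of $V_{\epsilon}$ (Proposition \ref{L:DynkinC1indense}) for the unbounded variation case. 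Your write-up merely makes explicit the verification that the paper leaves implicit.
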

%%%%%%%%%%%%%%%%%%%%%%%%%%%%%%%%%%%%%%%%%%%%%%%%%%%
%%%%%%%%%%%%%%%%%%%%%%%%%%%%%%%%%%%%%%%%%%%%%%%%%%%
%%%%%%%%%%%%%%%%%%%%%%%%%%%%%%%%%%%%%%%%%%%%%%%%%%%
%
\begin{lemma}\label{L:HarmonicityPrimitiveDynkin}
Let $V_{\epsilon}$ be the value of the $\epsilon$-Dynkin game of Definition \ref{def:DG}
and $W$ a primitive of $V_{\epsilon}$. Then, the function 
\[
x \rightarrow \mathcal{L}W(x)+c(x)- \epsilon W(x), 
\] 
\vskip2mm\par\noindent
{\rm(i)}
is constant in $(a^{\ast}_{\epsilon},b^{\ast}_{\epsilon})$,
\vskip2mm\par\noindent
{\rm(ii)}
increases in $(b^{\ast}_{\epsilon},\infty)$,
\vskip2mm\par\noindent
{\rm(iii)} decreases in $(-\infty, a^{\ast}_{\epsilon}).$
\end{lemma}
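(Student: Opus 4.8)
The plan is to reduce the three statements to a single sign condition on the ``derivative'' of $H(x):=\mathcal{L}W(x)-\epsilon W(x)+c(x)$, and to read off that sign from the martingale structure of the Dynkin value in Proposition \ref{L:Peskir}(iii). The function $H$ is continuous by Proposition \ref{P:DomainInfinitesimalgenerator}. Since $\mathcal{L}$ is translation invariant and $W'=V_{\epsilon}$, one formally has $(\mathcal{L}W)'=\mathcal{L}V_{\epsilon}$, hence
$$H'(x)=\mathcal{L}V_{\epsilon}(x)-\epsilon V_{\epsilon}(x)+c'(x),$$
so that (i)--(iii) amount to saying that this quantity vanishes on $(a^{\ast}_{\epsilon},b^{\ast}_{\epsilon})$, is nonnegative on $(b^{\ast}_{\epsilon},\infty)$, and is nonpositive on $(-\infty,a^{\ast}_{\epsilon})$. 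Recall that, by \eqref{martingale}, \eqref{second} and \eqref{first}, the process $\Psi_t:=e^{-\epsilon t}V_{\epsilon}(X_t)+\int_0^t e^{-\epsilon s}c'(X_s)\,ds$ is a martingale up to $\tau^{\ast}\wedge\sigma^{\ast}$, a submartingale up to $\tau^{\ast}$, and a supermartingale up to $\sigma^{\ast}$.

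I would first dispatch the two outer intervals, where the argument is clean. On $(b^{\ast}_{\epsilon},\infty)$ we have $V_{\epsilon}\equiv q_d$ by \eqref{E:constant}, so $V_{\epsilon}$ is locally constant; thus it lies in the domain of $\mathcal{L}$ and $\mathcal{L}V_{\epsilon}(x)=\int_{\mathbb{R}}(V_{\epsilon}(x+y)-q_d)\,\Pi(dy)$, an absolutely convergent integral (the integrand vanishes for $|y|$ small and is bounded for $|y|$ large). On this interval the identity $(\mathcal{L}W)'=\mathcal{L}V_{\epsilon}$ is rigorous, so $H'(x)=\mathcal{L}V_{\epsilon}(x)-\epsilon q_d+c'(x)$. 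Starting $\Psi$ at a point $x>b^{\ast}_{\epsilon}>a^{\ast}_{\epsilon}$, the submartingale property up to $\tau^{\ast}$ applies and, dividing by $t$ and letting $t\downarrow 0$, yields $\mathcal{L}V_{\epsilon}(x)-\epsilon V_{\epsilon}(x)+c'(x)\geq 0$; hence $H'\geq 0$ and $H$ is nondecreasing, proving (ii). Statement (iii) is symmetric: on $(-\infty,a^{\ast}_{\epsilon})$ one has $V_{\epsilon}\equiv -q_u$, and the supermartingale property up to $\sigma^{\ast}$ gives $\mathcal{L}V_{\epsilon}-\epsilon V_{\epsilon}+c'\leq 0$, so $H$ is nonincreasing.

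For (i), on $(a^{\ast}_{\epsilon},b^{\ast}_{\epsilon})$ both the sub- and the supermartingale properties are available (the process started strictly inside has not yet reached either barrier), so heuristically $H'\equiv 0$ and $H$ is constant. The difficulty, which I expect to be the main obstacle, is that inside the continuation region $V_{\epsilon}$ is only known to be Lipschitz (bounded variation case, Proposition \ref{P:DynkinLipschitz}) or $C^1$ (unbounded variation case, Proposition \ref{L:DynkinC1indense}), so the integro-differential expression for $\mathcal{L}V_{\epsilon}$ need not converge pointwise and the naive differentiation $(\mathcal{L}W)'=\mathcal{L}V_{\epsilon}$ is not justified. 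I would therefore argue in integrated form: for $a^{\ast}_{\epsilon}<x<y<b^{\ast}_{\epsilon}$,
$$H(y)-H(x)=\big(\mathcal{L}W(y)-\mathcal{L}W(x)\big)-\epsilon\int_x^y V_{\epsilon}(z)\,dz+\int_x^y c'(z)\,dz,$$
where $\mathcal{L}W(x)$ and $\mathcal{L}W(y)$ are finite because $W$ is in the domain of $\mathcal{L}$ (Proposition \ref{P:DomainInfinitesimalgenerator}). Writing $\mathcal{L}W$ through the integro-differential form of Proposition \ref{P:technicalinfinitesimal} and applying Fubini in the jump term (turning the increment of $\mathcal{L}W$ into $\int_x^y\mathcal{L}V_{\epsilon}(z)\,dz$), the increment of $H$ equals $\int_x^y\big(\mathcal{L}V_{\epsilon}(z)-\epsilon V_{\epsilon}(z)+c'(z)\big)dz$, which the martingale identity \eqref{martingale}, applied with the exit time of $(a^{\ast}_{\epsilon},b^{\ast}_{\epsilon})$, forces to vanish. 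The Fubini step is unconditional when $X$ has bounded variation, since there $\int_{|u|<1}|u|\,\Pi(du)<\infty$ and the Lipschitz bound on $V_{\epsilon}$ suffices; in the unbounded variation case it is exactly here that one must exploit extra regularity of $V_{\epsilon}$, controlling $V_{\epsilon}(z+u)-V_{\epsilon}(z)-uV_{\epsilon}'(z)$ to second order against $\int_{|u|<1}u^2\,\Pi(du)<\infty$, or instead pass to the pointwise semigroup generator and use the martingale property of $\Psi$ up to $\tau^{\ast}\wedge\sigma^{\ast}$ directly.

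Finally, I would glue the three pieces using the continuity of $H$: the constant value on $(a^{\ast}_{\epsilon},b^{\ast}_{\epsilon})$ extends to the endpoints, and monotonicity on the two outer intervals completes the picture. The genuinely delicate point throughout is the passage from the probabilistic (super/sub)martingale statements of Proposition \ref{L:Peskir}(iii) to the local generator (in)equalities under the limited smoothness of $V_{\epsilon}$; it is bypassed on the stopping regions, where $V_{\epsilon}$ is locally constant, and is concentrated in the continuation region, where the integrated formulation above is the natural device.
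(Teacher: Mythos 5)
Your reduction of the lemma to the sign of $\mathcal{L}V_{\epsilon}-\epsilon V_{\epsilon}+c'$ is the right heuristic, and your treatment of the outer intervals is close to workable (there $V_{\epsilon}$ is locally constant, so the jump integral converges and differentiation under the integral sign can be dominated). Nevertheless there are two gaps, one repairable and one fatal. The repairable one: in ``dividing by $t$ and letting $t\downarrow0$'' in \eqref{second}, the stopped and unstopped increments differ on the event $\{\tau^{\ast}\leq t\}$, whose probability is in general of order $t$, not $o(t)$; after dividing by $t$ this discrepancy is a priori $O(1)$, so you must prove it vanishes (via continuity of $V_{\epsilon}$, the strong Markov property and stochastic continuity), or stop instead at the exit time of a small ball --- and either way you still need a lemma identifying the resulting limit with the pointwise generator of the merely Lipschitz, locally constant function $V_{\epsilon}$. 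The fatal gap concerns (i), the heart of the statement. In the unbounded variation case the paper only provides $V_{\epsilon}\in C^{1}$ (Proposition \ref{L:DynkinC1indense}), so the compensated integrand $V_{\epsilon}(z+y)-V_{\epsilon}(z)-yV_{\epsilon}'(z)$ is only $O(|y|)$ with no rate, hence not integrable against $\Pi$ when $\int_{|y|<1}|y|\Pi(dy)=\infty$; neither of your proposed fixes exists in the paper (no second-order bounds on $V_{\epsilon}$ are available, and ``pass to the pointwise semigroup generator'' is not an argument). Even in the bounded variation case, where your Fubini step does give $H(y)-H(x)=\int_x^y\big(\mathcal{L}V_{\epsilon}(z)-\epsilon V_{\epsilon}(z)+c'(z)\big)dz$ a.e., the final claim that \eqref{martingale} ``forces'' this integral to vanish has no mechanism behind it: \eqref{martingale} is a statement about expectations of a stopped process, and converting it into the pointwise a.e. identity $\mathcal{L}V_{\epsilon}-\epsilon V_{\epsilon}+c'=0$ is precisely the domain problem you set out to avoid.

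The paper closes exactly this gap with a device absent from your proposal: it never differentiates $\mathcal{L}W$ and never defines $\mathcal{L}V_{\epsilon}$. Since $W$ is in the domain of $\mathcal{L}$ (Proposition \ref{P:DomainInfinitesimalgenerator}), Dynkin's theorem represents $H(x+h)=\mathcal{L}W(x+h)-\epsilon W(x+h)+c(x+h)$ as $\lim_{r\to0}H^{r}(h)$, where $H^{r}(h)=\E\big(e^{-\epsilon\eta_r}W(X_{\eta_r}+x+h)-W(x+h)+\int_0^{\eta_r}c(X_s+x+h)e^{-\epsilon s}ds\big)/\E(\eta_r)$ and $\eta_r$ is the exit time of $(-r,r)$. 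For fixed $r$ this is differentiable in $h$ (dominated convergence, using $W'=V_{\epsilon}$ bounded), so the mean value theorem gives $H^{r}(h)-H^{r}(0)=\frac{h}{\E(\eta_r)}\E\big(e^{-\epsilon\eta_r}V_{\epsilon}(X_{\eta_r}+z)-V_{\epsilon}(z)+\int_0^{\eta_r}c'(X_s+z)e^{-\epsilon s}ds\big)$ at an intermediate point $z=x+\theta^{r}_{h}h$, and the sign of this bracket --- nonpositive, zero, or nonnegative --- is exactly what \eqref{first}, \eqref{martingale} and \eqref{second} deliver once $r$ is small enough that the ball around $z$ stays in the relevant region. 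Letting $r\to0$ yields the three assertions at once, using only boundedness and continuity of $V_{\epsilon}$: no $\mathcal{L}V_{\epsilon}$, no Fubini, no second-order regularity. If you want to repair your write-up, replace your differentiation of $H$ by this mean-value comparison of exit-time quotients; your assignment of the three (super/sub)martingale properties to the three intervals then carries over verbatim.
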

\begin{proof}
We begin by proving  \rm{(iii)}.  For $r>0,$ let $\eta_r$  be the stopping time defined as $$\eta_r = \inf \lbrace t \geq 0 \colon X_t \notin (-r,r) \rbrace .$$ 
For $h>0$ we need to prove that the difference 
\begin{align*}%\label{E:HarmonicityPD1}
\lim_{r \to 0^+} \frac{\displaystyle \E \bigg(e^{-\epsilon \eta_r}W(X_{\eta_r}+x+h)-W(x+h)+\int_0^{\eta_r} c(X_s+x+h)e^{-\epsilon s}ds\bigg)}{\displaystyle \E(\eta_r)}  \nonumber \\ 
- \lim_{r \to 0^+}  \frac{\displaystyle \E \bigg(e^{-\epsilon \eta_r }W(X_{\eta_r }+x)-W(x)+\int_0^{ \eta_r } c(X_s+x)e^{-\epsilon s}ds \bigg)}{\displaystyle \E(\eta_r )} 
\end{align*}
is equal or smaller than zero (see \cite[Thm. V.5.2]{D}). Fix $r>0$  such that $x+h+r<a^{\ast}_{\epsilon}$ and define the differentiable function $H^r$ in a small interval of $x$ (differentiability can be proven using the dominated convergence theorem):
\begin{multline*}
H^{r}(h) \colon  =  \\
\frac{\displaystyle \E \bigg(e^{-\epsilon \eta_r }W(X_{\eta_r  }+x+h)-W(x+h)+\int_0^{\eta_r   } c(X_s+x+h)e^{-\epsilon s}ds\bigg)}{\displaystyle \E(\eta_r ) }.  
\end{multline*}
Then there exists a $\theta_h^{r} \in [0,1]$ such that $H^{r }(h)-H^{r }(0)$ is equal to

\begin{multline*}%\label{E:InfinitesimalGeneratorMediumvalue}
\frac{h}{\E(\eta_r  )}\Bigg( \E\bigg(e^{-\epsilon \eta_r  }V_{\epsilon}(X_{\eta_r  }+x+ \theta_h^{r  } h)\\ -V_{\epsilon}(x+\theta_h^{r } h ) + \int_0^{\eta_r } c'(X_s+x+\theta_h^{r } h)e^{-\epsilon s} ds  \bigg)\Bigg).
\end{multline*}
From \eqref{first} in Lemma \ref{L:Peskir} we know that the expression  above
is equal or smaller than zero for every $r<a_{\epsilon}^{\ast}-h-x$, thus concluding the proof of \rm{(iii)}. The proofs of \rm{(i)} and \rm{(ii)}  follow the same line of reasoning using \eqref{second} and \eqref{martingale} instead of \eqref{first}. This concludes the proof of the Lemma.
\end{proof}
%%%%%%%%%%%%%%%%%%%%%%%%%%%%%%%%%%%%%%%%%%%%%%%%%%%%%%%%%%%%%%%%%%%%%%%%%%%%%%%%%%%%%%%%%%
%%%%%%%%%%%%%%%%%%%%%%%%%%%%%%%%%%%%%%%%%%%%%%%%%%%%%%%%%%%%%%%%%%%%%%%%%%%%%%%%%%%%%%%%%%
\subsection{Continuity properties of reflecting strategies}
To prove  Theorems \ref{T:DISCOUNTEDPROBLEMSOLUTION} and \ref{T:ErgodicProblemsolution} we need results for the reflecting strategies. In this subsection we assume $b>0$ and $x \in [0,b]$ (implying $d^{0,b}_0=u^{0,b}_0=0$). Moreover, inequality \eqref{eq:reflectingbound} is used when applying integration by parts.  

\begin{proposition}\label{P:reflectingcontinuity}
For every $b>0$, $b_0>0$ and  $r>0,$ the following inequality holds: 
    \begin{multline*}
            \left\vert b_0\E_x D^{0,b_0}_t -b \E_x D^{0,b}_t \right\vert 
            \leq 2^{-1}\vert b^2-b_0^2\vert \\
            +t\int_{(-r,r)}y^2 \Pi(dy) + t \vert b_0-b \vert \left(\E_x \vert X_1 \vert + \sqrt{6}\int_{(-r,r)^c}\vert y \vert \Pi(dy)   \right) ,
                \end{multline*}
for all  $    t>0$. 
\end{proposition}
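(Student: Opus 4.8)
The plan is to start from the exact semimartingale identity behind \eqref{eq:reflectingbound} (the equality in \cite[Thm. 6.3]{AAGP}), namely the one obtained by applying the change of variables formula to $(X^{0,b})^2$ together with the Skorokhod conditions \eqref{D:skorhod}, \emph{before} the nonnegative term is discarded to produce the displayed inequality. Written as an equality it expresses $b\,\E_x D^{0,b}_t$ as the sum of a boundary/quadratic contribution (of the order of $b^2-(X^{0,b}_t)^2$), the expectation of the stochastic integral $\int_{(0,t)}X^{0,b}_{s^-}\,dX_s$, a Gaussian term $\tfrac{\nu^2}{2}t$, and a compensated jump contribution of the form $t\int_{\R}F_b(y)\,\Pi(dy)$, with $F_b$ the integrand appearing in \eqref{eq:reflectingbound}. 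I would write this identity for both $b$ and $b_0$, take expectations (integrability and the exchange of $\E_x$ with the time integrals being guaranteed by \eqref{eq:reflectingbound}), and subtract.

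In the difference the Gaussian terms $\tfrac{\nu^2}{2}t$ cancel, and I would estimate the three remaining blocks separately. For the boundary/quadratic block, factoring $b^2-b_0^2=(b-b_0)(b+b_0)$ and writing $(X^{0,b}_t)^2-(X^{0,b_0}_t)^2=(X^{0,b}_t-X^{0,b_0}_t)(X^{0,b}_t+X^{0,b_0}_t)$, one bounds it by $\tfrac12|b^2-b_0^2|$ using $0\le X^{0,b}_t\le b$ together with the Lipschitz comparison $|X^{0,b}_t-X^{0,b_0}_t|\le|b-b_0|$. For the stochastic-integral block I would integrate by parts — this is precisely where \eqref{eq:reflectingbound} enters, to control the resulting boundary expectations — so that, after passing to the difference of the $b$- and $b_0$-versions and invoking the same comparison, the $t|b-b_0|\,\E_x|X_1|$ term emerges.

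It remains to control the compensated jump block $t\int_{\R}(F_b-F_{b_0})\,\Pi(dy)$. Here I would split the integral at the level $r$. For the small jumps $|y|<r$ the two integrands differ by at most $y^2$ in absolute value (indeed they coincide for $|y|<\min(b,b_0)$), producing the barrier-independent term $t\int_{(-r,r)}y^2\,\Pi(dy)$. For the large jumps $|y|\ge r$ the dominant discrepancy is linear in $y$ and proportional to $b-b_0$; a case analysis according to the position of $r$ relative to $b_0$ and $b$, followed by collecting the numerical constants coming from the three pieces of $F_b$, yields the bound $\sqrt6\,t|b-b_0|\int_{(-r,r)^c}|y|\,\Pi(dy)$. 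Summing the three blocks gives the claimed inequality.

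The main obstacle is the comparison $|X^{0,b}_s-X^{0,b_0}_s|\le|b-b_0|$ for two processes driven by the same $X$ but reflected at different upper barriers (with common lower barrier $0$): it is what turns a crude bound of order $\max(b,b_0)$ into the sharp factor $|b-b_0|$ in both the quadratic and the integral blocks. This is a monotonicity-plus-uniform-bound property of the two-sided Skorokhod map, which should follow from the explicit construction in \cite{AAGP}; establishing it cleanly, together with the somewhat tedious bookkeeping that produces the constant $\sqrt6$ in the jump block, is where the real work lies.
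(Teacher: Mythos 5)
Your proposal is correct and is essentially the paper's own proof: the same starting identity \eqref{eq:assmusen6.3} from \cite[Thm. 6.3]{AAGP}, the same Lipschitz comparison $\vert X^{0,b}_t-X^{0,b_0}_t\vert\le\vert b-b_0\vert$ (which the paper does not derive from the Skorokhod construction but simply cites from \cite[Thm. 2.1]{KRLS}, so this is not where the real work lies), cancellation of the Gaussian terms, the bound $2^{-1}\vert b^2-b_0^2\vert$ for the quadratic block, and the same split of the jump contribution at level $r$ with $0\le\varphi\le y^2$ handling the small jumps (note, though, that your parenthetical claim that the two integrands coincide for $\vert y\vert<\min(b,b_0)$ is false, since the reflected positions $X^{0,b}_{s^-}$ and $X^{0,b_0}_{s^-}$ differ; only the bound by $y^2$ is true, and it is all you use). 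Two mechanical details differ from the paper: the drift term $t\vert b_0-b\vert\,\E_x\vert X_1\vert$ comes not from integration by parts but from the martingale property of $t\mapsto\int_{(0,t)}X^{0,b}_{s^-}\,d(X_s-s\E X_1)$, which reduces the stochastic-integral block to a Lebesgue integral of the drift; and the paper obtains the constant $\sqrt6$ without any case analysis, via the mean value theorem with the gradient bound $\Vert\nabla\varphi(x,y,b)\Vert\le2\sqrt3\vert y\vert$ on the convex domain $\lbrace 0\le x\le b\rbrace$ and the displacement bound $\Vert(X^{0,b_0}_{s^-},\Delta X_s,b_0)-(X^{0,b}_{s^-},\Delta X_s,b)\Vert\le\sqrt2\vert b_0-b\vert$ --- your case-analysis route would also work, since any constant at most $\sqrt6$ suffices, but it is unnecessary bookkeeping.
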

\begin{proof}
 According to \cite[Thm. 6.3]{AAGP}, for every $b >0$
\begin{equation}\label{eq:assmusen6.3}
 2bD^{0,b}(t)=x^2-(X^{0,b})^2 _{t}+2\int_0^t X^{0,b}_{s^-}dX_s+ [X,X]^c (t)+J^b(t)   ,
\end{equation}
 where $J$ is an increasing and finite process defined by:
 $$J^b(t)= \sum_{0 <s \leq t} \varphi(X^{0,b}_{s^-},\Delta X_s,b), $$
 with 
 \begin{equation*}
\varphi(x,y,b)=\left\{
\aligned
-(x^2+2xy), \qquad    &\text{if $y \leq -x$},  \\
y^2, \qquad                                  &\text{if $-x<y<b-x $,}\\
2y(b-x)-(b-x)^2, \qquad                                 &\text{if $y\geq b-x $,}  
\endaligned
\right.
\end{equation*}
whose domain is the set $\lbrace (x,y,b), \ 0 \leq x \leq b, \ y \in \mathbb{R} \rbrace$.
Moreover $0 \leq \varphi^b(x,y)\leq y^2$ for all $  (x,y) \in [0,b] \times \mathbb{R}$. Furthermore, due to the fact that $\vert X^{0,b_0}_t-X^{0,b}_t\vert \leq \vert b_0- b\vert $ for all $t \geq 0$, (see \cite[Thm. 2.1]{KRLS}) and using \eqref{eq:assmusen6.3} we get:
\begin{multline*}
      \left\vert b_0  \E_x D^{0,b_0}_t  -b \E_x D^{0,b}_t \right\vert
           \\  \leq 2^{-1}\vert b^2-b_0^2\vert +t\int_{(-r,r)}y^2 \Pi(dy) + t \vert b_0-b \vert \E_x \vert X_1 \vert 
            \\+ 2^{-1}\left\vert\E_x  \sum_{0 <s \leq t, \ \vert\Delta X_s \vert \geq r}\left( \varphi(X^{0,b_0}_{s^-},\Delta X_s,b_0)-\varphi(X^{0,b}_{s^-},\Delta X_s,b) \right) \right\vert ,
                \end{multline*}
for all $t>0$.  Therefore to finish the proof we need to prove 
\begin{multline}\label{eq:boundforbarphi}
    2^{-1}\left\vert\E_x  \sum_{0 <s \leq t, \ \vert \Delta X_s \vert \geq r}\left( \varphi(X^{0,b_0}_{s^-},\Delta X_s,b_0)-\varphi(X^{0,b}_{s^-},\Delta X_s,b) \right)  \right\vert 
    \\ \leq t \vert b_0-b \vert \sqrt{6}\int_{(-r,r)^c} \vert y \vert \Pi(dy).   
    \end{multline}
For that endeavor, observe that $\varphi$ is continuously differentiable in its domain and  
$$ \vert \vert \nabla \varphi(x,y,b) \vert \vert   \leq 2 \sqrt{3} \vert y \vert. $$
From this inequality and the fact 
$$ \lVert (X^{0,b_0}_{s^-},\Delta X_s,b_0)-(X^{0,b}_{s^-},\Delta X_s,b)   \rVert \leq \sqrt{2 } \vert b_0 -b \vert ,$$ we deduce
\begin{multline*}
    2^{-1}\left\vert\E_x  \sum_{0 <s \leq t, \ \vert \Delta X_s \vert \geq r}\left( \varphi(X^{0,b_0}_{s^-},\Delta X_s,b_0)-\varphi(X^{0,b}_{s^-},\Delta X_s,b) \right)  \right\vert 
    \\  \leq   \sqrt{6} \vert b_0-b\vert   \E_x  \sum_{0 <s \leq t, \ \vert \Delta X_s \vert \geq r} \vert \Delta X_s \vert      . 
    \end{multline*}
Rewriting the expectation in the right term of the inequality we get
\begin{multline*}
    2^{-1}\left\vert\E_x  \sum_{0 <s \leq t, \ \vert\Delta X_s\vert \geq r}\left( \varphi(X^{0,b_0}_{s^-},\Delta X_s,b_0)-\varphi(X^{0,b}_{s^-},\Delta X_s,b)  \right) \right\vert 
    \\  \leq  \sqrt{6}  t  \vert b_0-b\vert \int_{(-r,r)^c} \vert y \vert \Pi(dy),   
    \end{multline*}
    giving  \eqref{eq:boundforbarphi} and concluding the proof of the proposition.
\end{proof}
\begin{lemma}\label{L:reflectingcontinuity}
For every $b_0>0$
\vskip2mm\par\noindent
{\rm(i)}
$\lim_{b \to b_0}
\E_x \displaystyle{\int_{(0,\infty)}}e^{-\epsilon s}dD_s^{0,b}=\E_x\displaystyle{\int_{(0,\infty)}}e^{-\epsilon s}  dD_s^{0,b_0}$, 
\vskip2mm\par\noindent
{\rm(ii)}
$\lim_{b \to b_0} \lim_{T \to \infty} \frac{1}{T}\E_x D^{0,b}_T= \lim_{T \to \infty} \frac{1}{T} \E_x D^{0,b_0}_T  $. 
\end{lemma}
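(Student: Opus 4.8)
The plan is to reduce both statements to the continuity, in $b$, of the expectations $b\mapsto\E_x D^{0,b}_t$ for fixed $t$, and then to leverage Proposition \ref{P:reflectingcontinuity}, whose right-hand side is an explicit affine function of $|b_0-b|$ and $|b^2-b_0^2|$ that tends to zero as $b\to b_0$. The main obstacle is that both parts involve a \emph{limit in $t$} (an integral against $e^{-\epsilon s}$ in (i), a Cesàro-type limit $T^{-1}$ in (ii)) \emph{interchanged} with the limit $b\to b_0$, so the crux is to obtain bounds uniform in the relevant time parameter; Proposition \ref{P:reflectingcontinuity} is tailored for exactly this, since its error terms grow only linearly in $t$ with coefficient $|b_0-b|$.

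\textbf{Proof of (i).} First I would pass from $D^{0,b}_t$ to the discounted integral by integration by parts. Writing $\int_{(0,\infty)}e^{-\epsilon s}\,dD^{0,b}_s=\epsilon\int_0^\infty e^{-\epsilon s}D^{0,b}_s\,ds$ (using $D^{0,b}_0=0$ and that the boundary term vanishes, as justified by the bound \eqref{eq:reflectingbound}), the problem becomes controlling $\int_0^\infty e^{-\epsilon s}\,\E_x D^{0,b}_s\,ds$. Inserting the normalizing factor $b$, I would estimate
\[
\Big|b\!\int_0^\infty\! e^{-\epsilon s}\E_x D^{0,b}_s\,ds-b_0\!\int_0^\infty\! e^{-\epsilon s}\E_x D^{0,b_0}_s\,ds\Big|
\le \int_0^\infty e^{-\epsilon s}\big|\,b\,\E_x D^{0,b}_s-b_0\,\E_x D^{0,b_0}_s\,\big|\,ds,
\]
and then apply Proposition \ref{P:reflectingcontinuity} pointwise in $s$ with $t=s$. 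The integrand is bounded by
\[
e^{-\epsilon s}\Big(\tfrac12|b^2-b_0^2|+s\!\int_{(-r,r)}\! y^2\Pi(dy)+s\,|b_0-b|\,C_r\Big),
\]
with $C_r=\E_x|X_1|+\sqrt6\int_{(-r,r)^c}|y|\Pi(dy)$ finite. Since $\int_0^\infty e^{-\epsilon s}\,ds$ and $\int_0^\infty s\,e^{-\epsilon s}\,ds$ are both finite, the whole expression is bounded by a constant (depending on $r$) times $\tfrac12|b^2-b_0^2|+|b_0-b|C_r+\int_{(-r,r)}y^2\Pi(dy)/\epsilon^2$. Letting $b\to b_0$ kills the first two terms, leaving the residual $\int_{(-r,r)}y^2\Pi(dy)$; since $\int_{\R}(y^2\wedge|y|)\Pi(dy)<\infty$, this residual can be made arbitrarily small by first choosing $r$ small. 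A standard "$\epsilon$ of room" argument (choose $r$ first, then $b$ close to $b_0$) yields the claim. Finally I would remove the factor $b$ versus $b_0$ mismatch: since $b\to b_0$, dividing by $b$ and using that $\int e^{-\epsilon s}\E_x D^{0,b_0}_s\,ds$ is finite and bounded, the normalized limit follows.

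\textbf{Proof of (ii).} Here the structure is the same but with the Cesàro limit. For fixed $t=T$, Proposition \ref{P:reflectingcontinuity} gives
\[
\tfrac1T\big|\,b\,\E_x D^{0,b}_T-b_0\,\E_x D^{0,b_0}_T\,\big|
\le \frac{|b^2-b_0^2|}{2T}+\int_{(-r,r)}\! y^2\Pi(dy)+|b_0-b|\,C_r .
\]
Taking $T\to\infty$ first makes the $\tfrac1{2T}|b^2-b_0^2|$ term vanish, so
\[
\limsup_{T\to\infty}\tfrac1T\big|\,b\,\E_x D^{0,b}_T-b_0\,\E_x D^{0,b_0}_T\,\big|
\le \int_{(-r,r)}\! y^2\Pi(dy)+|b_0-b|\,C_r .
\]
Now letting $b\to b_0$ removes the second term, and choosing $r$ small makes the first arbitrarily small, exactly as in (i). Dividing through by $b_0$ (and absorbing the $b$ versus $b_0$ normalization, which is harmless since $b\to b_0$ and the Cesàro limits are finite by Corollary 6.6 of \cite{AAGP}) gives the stated convergence $\lim_{b\to b_0}\lim_{T\to\infty}\tfrac1T\E_x D^{0,b}_T=\lim_{T\to\infty}\tfrac1T\E_x D^{0,b_0}_T$. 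I expect the only delicate point to be justifying that these Cesàro limits exist and are finite in the first place, for which I would invoke the moment bound \eqref{eq:reflectingbound} together with the cited results of \cite{AAGP}.
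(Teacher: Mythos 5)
Your proposal is correct and follows essentially the same route as the paper: integration by parts to reduce both claims to continuity of $b\mapsto b\,\E_x D^{0,b}_s$ integrated (or Ces\`aro-averaged) in time, then Proposition \ref{P:reflectingcontinuity} with the choose-$r$-small-first, then-$b$-close-to-$b_0$ argument to kill the residual term $\int_{(-r,r)}y^2\Pi(dy)$. The paper states this in two sentences; your write-up supplies the details (the order of limits and the handling of the normalizing factor $b$) that the paper leaves implicit.
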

\begin{proof}
By integration by parts and using \eqref{eq:reflectingbound} we deduce that $\rm{(i)}$ is equivalent to prove that the equality
$$\lim_{b \to b_0 }
\E_x \displaystyle{\int_0^{\infty}}e^{-\epsilon s}  D_s^{0,b}ds   =\E_x \displaystyle{\int_0^{\infty}}e^{-\epsilon s}  D_s^{0,b_0} ds  ,$$
holds. Again this is clear from Proposition \ref{P:reflectingcontinuity}. The second statement is clearly deduced from the same proposition.
\end{proof}
%
%
%\begin{proposition}\label{P:reflectingunboundedinfinity}
 %   If the process $ X$ has unbounded variation then
  %  $$\lim_{b \to 0} \E_x D_t^{0,b}=\infty, \ \text{for all} \ t>0 .  $$
%\end{proposition}
%
%
\begin{lemma}\label{L:reflectingunboundedinfinity}
If the process $ X $ has unbounded variation then
\vskip2mm\par\noindent
{\rm(i)}
$\lim_{b \to 0}
\E_x \displaystyle{\int_{(0,\infty)}}e^{-\epsilon s}  dD_s^{0,b}=\infty$, 
\vskip2mm\par\noindent
{\rm(ii)}
$\lim_{b \to 0} \lim_{T \to \infty} \frac{1}{T}\E_x D^{0,b}_T= \infty $. 
\end{lemma}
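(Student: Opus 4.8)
The plan is to reduce both statements to a single lower bound on $\E_x D^{0,b}_t$ that blows up as $b\downarrow 0$, and to extract that bound from the reflection identity \eqref{eq:assmusen6.3}. For (i), integration by parts as in the proof of Lemma \ref{L:reflectingcontinuity} gives $\E_x\int_{(0,\infty)}e^{-\epsilon s}\,dD^{0,b}_s=\epsilon\,\E_x\int_0^\infty e^{-\epsilon s}D^{0,b}_s\,ds$; here the boundary terms vanish because $D^{0,b}_0=(x-b)^+=0$ (recall $x\in[0,b]$) and $e^{-\epsilon s}\E_x D^{0,b}_s\to 0$ by the linear growth in \eqref{eq:reflectingbound}. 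For (ii) the object is the long-run reflection rate $\overline D_b:=\lim_{T\to\infty}\frac1T\E_x D^{0,b}_T$. Both are controlled by the growth in $t$ of $\E_x D^{0,b}_t$, so it suffices to bound that growth rate below by a quantity tending to $\infty$ as $b\downarrow 0$.

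First I would take expectations in \eqref{eq:assmusen6.3}. Splitting $\int_0^t X^{0,b}_{s^-}\,dX_s$ into the mean-zero martingale $\int_0^t X^{0,b}_{s^-}\,d(X_s-s\mu)$ and the drift part $\mu\int_0^t X^{0,b}_{s^-}\,ds$, and using $0\le X^{0,b}_s\le b$, $x^2\le b^2$, $[X,X]^c_t=\nu^2 t$ and $J^b(t)=\sum_{0<s\le t}\varphi(X^{0,b}_{s^-},\Delta X_s,b)\ge 0$ (with $\varphi$ from Proposition \ref{P:reflectingcontinuity}), one gets the exact relation
\begin{equation*}
2b\,\E_x D^{0,b}_t=x^2-\E_x(X^{0,b}_t)^2+2\mu\,\E_x\!\int_0^t X^{0,b}_{s^-}\,ds+\nu^2 t+\E_x J^b(t),
\end{equation*}
whence, bounding the first three terms crudely, $\E_x D^{0,b}_t\ge \frac{1}{2b}\big(\nu^2 t+\E_x J^b(t)\big)-|\mu|\,t-\tfrac b2$. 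Everything now reduces to the size, relative to $b$, of the two nonnegative source terms $\nu^2 t$ and $\E_x J^b(t)$.

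In the case $\nu>0$ this finishes both claims at once: $\E_x D^{0,b}_t\ge \frac{\nu^2}{2b}t-|\mu|t-\frac b2$, so the linear growth rate in $t$ is at least $\frac{\nu^2}{2b}-|\mu|\to\infty$; inserting this into the integration-by-parts identity of the first paragraph yields (i), and into $\overline D_b$ yields (ii). The remaining case is $\nu=0$ with unbounded variation, equivalent to $\int_{(-1,1)}|y|\,\Pi(dy)=\infty$, where the whole effect must come from $\E_x J^b(t)$ and the point is to show $\tfrac1t\E_x J^b(t)\big/b\to\infty$ as $b\downarrow0$ (and the analogous statement for the discounted average). The jumps staying inside $[0,b]$ contribute only $\int_{|y|<b}y^2\,\Pi(dy)=O(b)$ and are insufficient; the divergence must be captured from the reflected jumps. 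Writing $X^{0,b}_{s^-}=z$, a direct check on the three branches of $\varphi$ gives $\varphi(z,y,b)\ge \tfrac b2|y|$ for down-jumps $y\le-2b$ when $z\ge b/2$, and $\varphi(z,y,b)\ge\tfrac b2|y|$ for up-jumps $y\ge 2b$ when $z\le b/2$. These bring in the divergent integral $\int_{2b\le|y|<1}|y|\,\Pi(dy)\to\infty$.

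The main obstacle is that each of these pointwise bounds is position-dependent and degenerates at one endpoint of $[0,b]$ (and for a one-sided jump measure only one of the two is even available). Thus one cannot bound $\int_\R\varphi(z,y,b)\,\Pi(dy)$ below uniformly in $z$; instead one must control the occupation measure of the reflected process, showing that $X^{0,b}$ spends a fraction of time bounded away from $0$ in the half of $[0,b]$ where the relevant estimate is non-degenerate, uniformly in small $b$. I would obtain such an occupation estimate from the ergodic and stationary theory of reflected Lévy processes in \cite{AAGP}, and combine it with the pointwise lower bounds above to conclude that $\E_x J^b(t)/(bt)\to\infty$; transferring this to the discounted integral via the first paragraph then gives (i) and (ii). This occupation-measure step, especially in the one-sided case, is where I expect the real work to lie.
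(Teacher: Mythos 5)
Your treatment of the Gaussian case ($\nu>0$) is correct and is essentially the paper's argument: the $\nu^2 t$ term in the two-sided reflection identity \eqref{eq:assmusen6.3} forces $\E_x D^{0,b}_t\geq \frac{\nu^2}{2b}t-|\mu|t-\frac b2$, and both limits follow. The genuine gap is in the pure-jump case $\nu=0$, and it sits exactly where you yourself place ``the real work'': the occupation-measure estimate you invoke --- that $X^{0,b}$ spends a fraction of time, bounded below uniformly in small $b$, in the half of $[0,b]$ where your pointwise bound $\varphi(z,y,b)\geq\tfrac b2|y|$ is non-degenerate --- is never proved, and it is not a routine consequence of \cite{AAGP}. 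Via \eqref{E:stationary}, the stationary occupation of the lower half of $[0,b]$ equals a one-sided exit probability of $X$ from the shrinking interval $[-b/2,b/2)$, and the side is dictated by the jump bound (for a spectrally one-sided $X$ you cannot choose it); uniform-in-$b$ lower bounds for such small-interval exit probabilities are delicate and process-dependent. Moreover, stationarity only controls $\lim_{T\to\infty}\frac1T\E_x J^b(T)$, so even granting the occupation estimate, statement (i) would need a further argument, since your integration-by-parts reduction requires a lower bound on $\E_x D^{0,b}_t$ at finite times. (A side remark: your claim $\int_{|y|<b}y^2\,\Pi(dy)=O(b)$ is false in general --- for $\alpha$-stable measures this integral is of order $b^{2-\alpha}\gg b$ --- though the error is harmless since you only discard that term.)

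The paper avoids the occupation problem entirely with a position-independent bound, and this is the idea your proposal is missing: if $\Delta X_t\geq b$ then, since $X^{0,b}_{t^-}\geq 0$, the overshoot above $b$ forces $\Delta D^{0,b}_t\geq \Delta X_t-b$, whence by the compensation formula $\E_x D^{0,b}_T\geq T\int_{(b,\infty)}(y-b)\,\Pi(dy)$ for every $T$. When $\nu=0$, unbounded variation means $\int_{(0,1)}y\,\Pi(dy)=\infty$ or $\int_{(-1,0)}|y|\,\Pi(dy)=\infty$. In the first case the right-hand side diverges as $b\to 0$ by monotone convergence, giving (i) and (ii) at once (the bound holds for all $T$, so it feeds directly into both the discounted integral and the ergodic average). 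In the second case one runs the same argument on $-X$ to get divergence for $U^{0,b}$, and transfers it to $D^{0,b}$ through $X^{0,b}_t=X_t+U^{0,b}_t-D^{0,b}_t\in[0,b]$ and finiteness of $\E X_1$. Replacing your occupation-measure step by this observation --- jumps exceeding the whole interval width are reflected no matter where the process sits --- closes the gap.
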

\begin{proof}  In the case where $\nu \neq 0$ statement (i) is deduced by integration by parts, \cite[Thm. 6.3]{AAGP} and the fact that $D_t^{0,b}$ increases in $t$. 
In that case, the statement (ii) follows from \cite[Cor. 6.6]{AAGP}.
 We proceed to study the case $\nu =0$ and $\displaystyle{\int_{(0,1)}}y \Pi(dy)=\infty$. It is clear that for every $t>0$,  $\Delta D^{0,b}_t\geq (\Delta X_t-b )\mathbf{1}_{\lbrace \Delta X_t \geq b \rbrace }$, 
 so
$$ 
\E_x D^{0,b}_T \geq T \int_{(b,\infty)}(y-b)\Pi(dy). 
$$
Therefore, the statement (i) is proven by integration by parts and taking limit as $b\to0$. 
To prove (ii),  divide by $T$ and take limit as $b\to0$ in the inequality. 
For the case $\nu =0$ and 
$\displaystyle{\int_{(-1,0)}}y\Pi(dy)=\infty$ we use the same argument for the process $-X$, 
to prove 
$$
\lim_{b \to 0}\lim_{T \to \infty}\frac{1}{T}\E_x U^{0,b}_T=\infty, \qquad \lim_{b \to 0}
\E_x \displaystyle{\int_{(0,\infty)}}e^{-\epsilon s}  dU_s^{0,b}=\infty .
$$
Then we use the fact that $X^{0,b}_t=  X_t-D^{0,b}_t +U^{0,b} _t\in [0,b]$ for all $t$ and conclude the claim.
\end{proof}
%
%
%For the rest of the subsection $X$ has bounded variation.
%
%
\begin{proposition}\label{P:reflectingboundedlimit}
If the process $\lbrace X_t \rbrace$ has bounded variation and non-positive drift then for all $b>0$  
 $$\vert \E_x D^{0,b}_t - \E_x S^+_t  \vert \leq t \int_{(0,b]} y \Pi(dy)+bt \Pi([b,\infty)) .$$
\end{proposition}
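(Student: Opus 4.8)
The plan is to estimate the difference $b D_t^{0,b} - \E_x S_t^+$ by decomposing the Skorokhod reflection at the upper barrier $b$ and comparing it with the pure subordinator part $S^+$, exploiting the bounded-variation structure where $X_t = x + S_t^+ - S_t^-$. First I would recall that for a bounded-variation L\'evy process with non-positive drift, the generator formula of Proposition~\ref{P:technicalinfinitesimal}(ii) applies, and, more importantly, the reflected process $X^{0,b}$ is confined to $[0,b]$ via the Skorokhod equation \eqref{D:skorhod}. The key observation is that $D^{0,b}$ increases only to push the process down from $b$, so its increments are driven by the upward jumps of $X$ (and, if present, the positive drift component, but here the drift is non-positive); thus $S^+$ and $D^{0,b}$ both accumulate from the same upward movement, and their difference should be controlled by how much of each upward jump is ``absorbed'' rather than reflected.

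The main step is to write down an exact pathwise identity for $D^{0,b}_t$ analogous to \eqref{eq:assmusen6.3}, or more directly to track the upward jumps: an upward jump $\Delta X_s = y$ contributes to $D^{0,b}$ essentially the overshoot past $b$, namely roughly $(X^{0,b}_{s^-} + y - b)^+$, whereas it contributes $y$ to $S^+_t$. I would then compare increments jump by jump. For a jump of size $y \geq b$, the reflected increment is at most $y$ and at least $y - b$ (since the process starts somewhere in $[0,b]$), so the discrepancy per such jump is bounded by $b$; for a jump of size $y \in (0,b]$, the reflected push is at most $y$, again yielding a discrepancy controlled by $y$. Taking expectations and using the compensation formula for the Poisson random measure $N(ds,dy)$ with intensity $ds\,\Pi(dy)$, these two regimes produce exactly the two integral terms $t\int_{(0,b]} y\,\Pi(dy)$ and $bt\,\Pi([b,\infty))$. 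The non-positive drift hypothesis is what lets me discard the drift contribution to $S^+$, ensuring $S^+$ is purely the sum of positive jumps.

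I would organize the argument as follows. I would first express both $S_t^+$ and $D_t^{0,b}$ through sums (or compensated integrals) over the jumps of $X$, using that in the bounded-variation non-positive-drift case $S^+_t = \sum_{0<s\leq t} (\Delta X_s)^+$ up to the (non-positive, hence negligible for the upper bound) drift. Then I would bound $|\Delta D^{0,b}_s - (\Delta X_s)^+|$ pointwise: it vanishes when the jump is downward, is at most $b$ when $\Delta X_s \geq b$, and is at most $\Delta X_s$ when $0 < \Delta X_s < b$ (because the reflected process cannot be pushed down by more than its overshoot). Summing these bounds, taking $\E_x$, and applying the expectation of the jump counts gives precisely the claimed inequality.

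The hard part will be making the per-jump comparison rigorous, because $D^{0,b}$ is not literally a sum of jump contributions: the Skorokhod map also produces a continuous (or singular) pushing component, and the position $X^{0,b}_{s^-}$ before a jump is itself random and path-dependent. I expect the cleanest route is to avoid decomposing $D^{0,b}$ directly and instead use the quadratic identity already invoked in Proposition~\ref{P:reflectingcontinuity}, or to bound $D^{0,b}$ above and below by comparison with the reflection of the process that retains only its jumps. The subtlety to watch is ensuring the non-positive-drift assumption genuinely removes any downward-drift contribution to the pushing term, so that the overshoot estimate is one-sided and clean; the convexity and finiteness assumptions $\int_{\R}(y^2 \wedge |y|)\Pi(dy) < \infty$ guarantee all the resulting integrals are finite.
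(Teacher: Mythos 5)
Your proposal follows essentially the same route as the paper: the paper's proof is exactly the per-jump comparison you outline, writing $D^{0,b}_t=\sum_{0<s\leq t}\Delta D^{0,b}_s$, noting that $\Delta D^{0,b}_s\neq 0$ forces $\Delta X_s>0$ and that $0\leq \Delta X_s-\Delta D^{0,b}_s\leq b$ when $\Delta X_s\geq b$, and then taking expectations via the compensation formula to obtain the two integral terms. The ``hard part'' you flag dissolves under the hypotheses: since the process has bounded variation, no Gaussian component, and non-positive drift, the reflected process can cross the upper barrier only by jumping, so $D^{0,b}$ has no continuous pushing component and is literally the sum of its jumps --- precisely the identity the paper uses, with no need for the quadratic identity of Proposition \ref{P:reflectingcontinuity}.
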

\begin{proof}
Fix $b>0$. On one hand, observe that the process $D^{0,b}_t$ can be rewritten as:
$$
D^{0,b}_t= \sum_{0 < s \leq t} \Delta D^{0,b}_s.
$$
On the other hand it is clear that if $\Delta D^{0,b}_s \neq 0$ then $\Delta X_s >0$. Moreover,  if
$\Delta X_s \geq b $ then $0 \leq\Delta X_s- \Delta D^{0,b}_s \leq b$. 
Thus

\begin{multline*}
    \vert \E_x D^{0,b}_t - \E_x S^+_t  \vert \leq \E_x    \sum_{\ 0 < s \leq t } \left( \Delta X_s \mathbf{1}_{\lbrace 0 < \Delta X_s \leq b \rbrace}  +  b \mathbf{1}_{\{\Delta X_s >b\}}  \right) \\
    = t \int_{(0,b]} y \pi(dy)+bt \pi([b,\infty)),
\end{multline*}  
concluding the proof.
\end{proof}
\begin{lemma}\label{L:reflectingboundedlimit}
If the process $ X $ has bounded variation then
\vskip2mm\par\noindent
{\rm(i)}
$\lim_{b \to 0}
\E_x \displaystyle{\int_{(0,\infty)}}e^{-\epsilon s}dD_s^{0,b}=\E_x\displaystyle{\int_{(0,\infty)}}e^{-\epsilon s}  dS^+_s   $, 

\vskip2mm\par\noindent
{\rm(ii)}
$\lim_{b \to b_0} \lim_{T \to \infty} \frac{1}{T}\E_x D^{0,b}_T= \lim_{T \to \infty} \frac{1}{T}\E_x S^+_T $. 
\end{lemma}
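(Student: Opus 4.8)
The proof splits according to the sign of the drift, the point being that Proposition \ref{P:reflectingboundedlimit} is only available for non-positive drift. For (i) I would first reduce, exactly as in the proof of Lemma \ref{L:reflectingcontinuity} (integration by parts together with \eqref{eq:reflectingbound}, using $D^{0,b}_0=0$ since $x\in[0,b]$, $S^+_0=0$, and $e^{-\epsilon s}\E_x D^{0,b}_s\to 0$ by linearity of the bounds), the claim to
\[
\lim_{b\to 0}\E_x\int_0^\infty e^{-\epsilon s}D^{0,b}_s\,ds=\E_x\int_0^\infty e^{-\epsilon s}S^+_s\,ds .
\]
Throughout I write $\rho(b)=\int_{(0,b]}y\,\Pi(dy)+b\,\Pi([b,\infty))$ and note $\rho(b)\to 0$ as $b\to 0$, because $\int_{(0,1)}y\,\Pi(dy)<\infty$ forces $\int_{[b,1)}y\,\Pi(dy)\to0$ and $b\,\Pi([b,\infty))\le b\,\Pi([1,\infty))+\int_{[b,1)}y\,\Pi(dy)\to0$.

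When $X$ has non-positive drift, both statements follow directly from Proposition \ref{P:reflectingboundedlimit}, which gives $|\E_x D^{0,b}_t-\E_x S^+_t|\le t\,\rho(b)$. For (i), multiplying by $e^{-\epsilon s}$ and integrating yields a difference bounded by $\epsilon^{-2}\rho(b)\to 0$. For (ii), since $S^+$ is a subordinator we have $\E_x S^+_T=T\,\E S^+_1$, so $\tfrac1T\E_x D^{0,b}_T$ lies within $\rho(b)$ of $\E S^+_1$ for every $T$; hence the inner limit (whose existence is guaranteed by \cite[Cor. 6.6]{AAGP}, as in Lemma \ref{L:reflectingunboundedinfinity}) is within $\rho(b)$ of $\lim_T\tfrac1T\E_x S^+_T=\E S^+_1$, and letting $b\to 0$ concludes.

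The main obstacle is the positive-drift case: here $D^{0,b}$ acquires a continuous component from reflecting the drift at the upper barrier, so Proposition \ref{P:reflectingboundedlimit} no longer applies to $X$ itself. The key idea is the reflection symmetry $W:=b-X$: the lower reflection of $X$ in $[0,b]$ is the upper reflection of $W$ in $[0,b]$, that is $D^W=U^{0,b}$, and since $W_t=(b-x)+S^-_t-S^+_t$ the process $W$ has drift $-d_0<0$, positive subordinator $S^{+,W}=S^-$, and Lévy measure the reflection of $\Pi$. Applying Proposition \ref{P:reflectingboundedlimit} to $W$ (started at $W_0=b-x\in[0,b]$, which is why the standing assumption $x\in[0,b]$ is used) gives $|\E_x U^{0,b}_t-\E_x S^-_t|\le t\,\rho^-(b)$ with $\rho^-(b)=\int_{[-b,0)}|y|\,\Pi(dy)+b\,\Pi((-\infty,-b])\to 0$. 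Combining this with the Skorokhod identity obtained from \eqref{D:controlledequation} and $X_t=x+S^+_t-S^-_t$,
\[
D^{0,b}_t-S^+_t=(x-X^{0,b}_t)+\bigl(U^{0,b}_t-S^-_t\bigr),\qquad |x-X^{0,b}_t|\le b ,
\]
I conclude as before: for (i) the first term contributes at most $b/\epsilon$ after integrating $e^{-\epsilon s}$ and the second at most $\epsilon^{-2}\rho^-(b)$, both vanishing; for (ii), dividing by $T$ kills the bounded term, the second term is $O(\rho^-(b))$ uniformly in $T$, and $\tfrac1T\E_x S^+_T=\E S^+_1$, so letting $b\to 0$ yields $\lim_T\tfrac1T\E_x S^+_T$. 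The only routine points left are the integration by parts and the existence of the inner ergodic limit, both handled exactly as in the preceding lemmas.
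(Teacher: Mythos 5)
Your proposal is correct and takes essentially the same route as the paper's (very terse) proof: for non-positive drift both statements are read off from Proposition \ref{P:reflectingboundedlimit} together with integration by parts, and for positive drift the paper argues exactly as you do, namely by establishing the analogous estimate for $U^{0,b}$ --- which your reflection $W=b-X$ makes precise --- and concluding from the boundedness $X^{0,b}_t\in[0,b]$. One small slip worth fixing: your justification that $b\,\Pi([b,\infty))\to 0$ does not work as written, because $\int_{[b,1)}y\,\Pi(dy)$ increases to $\int_{(0,1)}y\,\Pi(dy)$ as $b\to 0$, which need not be zero. The fact is still true: given $\delta>0$ pick $c\in(0,1)$ with $\int_{(0,c)}y\,\Pi(dy)<\delta$; then for $b<c$ one has $b\,\Pi([b,1))\le \int_{[b,c)}y\,\Pi(dy)+b\,\Pi([c,1))\le \delta+b\,\Pi([c,1))$, and the last term vanishes as $b\to 0$ for fixed $c$, so $\limsup_{b\to 0}b\,\Pi([b,\infty))\le\delta$ for every $\delta$. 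The same correction applies to your $\rho^-(b)$ in the positive-drift case.
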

\begin{proof}
In the case that the drift is non-positive, both statements follow form Proposition \ref{P:reflectingboundedlimit} (in (i) integration by parts must be used). 
When the drift is positive, similar results can be proven for the process $U_t^{0,b}$,
concluding the proof from the fact $X_t^{0,b} \in [0,b]$ for all $t>0$.
\end{proof}
%%%%%%%%%%%%%%%%%%%%%%%%%%%%%%%%%%%%%%%%%%%%%%%%%%%%%%%%%%%%%%%%%%%%%%%%%%%%%%%%%%%%%%%%%%
\subsection{Proofs of Theorems \ref{T:DISCOUNTEDPROBLEMSOLUTION} and \ref{T:ErgodicProblemsolution} }
Finally, we have the necessary ingredients to prove the main results of the article.

%%%%%%%%%%%%%%%%%%%%
\begin{proof}[Proof of Theorem \ref{T:DISCOUNTEDPROBLEMSOLUTION}]
If $c \in C^2(\mathbb{R})$ we consider 
$$ 
W(x)=\int_{a^\ast_\epsilon}^x V_{\epsilon}(y)dy. 
$$
Then the result is obtained from Proposition \ref{P:DomainInfinitesimalgenerator} and Lemma \ref{L:HarmonicityPrimitiveDynkin}, taking $u(x)= W(x) +\epsilon^{-1}(\mathcal{L}_XW(a^\ast_\epsilon)+c(a^\ast_\epsilon))$, and applying the verification Theorems  \ref{T:Verificationdiscounted} and \ref{T:VerificationsBarriersOptimalDISCOUNT}. 
Observe that we can assume that the controls $(U,D)$ are under the conditions of  Theorem \ref{T:Verificationdiscounted}, because if the process $X^{U,D}$ does not satisfy inequality \eqref{eq:Conditionforverification}, then $J(x,U,D)=\infty$ due to condition \rm{(ii)} of Definition \ref{D:costfunction} and the fact $G_{\epsilon}$ is linear outside an interval (a similar argument can be found in \cite[Thm. 2.10]{CMO}). 
Furthermore, we deduce,that $G_{\epsilon}$ is convex because it is a primitive of the value of the 
$\epsilon$-Dynkin game \ref{def:DG}. \\
Assume now that $c \notin C^2 (\mathbb{R})$. 
Note that there exist constants $m_0>0, \ l_0>0 $ 
such that $\vert c'_{\delta}(l_0)\vert >m_0$ for every $x \notin [-l_0,l_0]$, 
and if $\alpha>0$, 
$\vert c'_{\delta}(l) \vert \wedge \vert c'_{\delta}(-l)\vert \rightarrow \infty$\ $(l\rightarrow  \infty)$ uniformly in $(0,\bar\delta]$ for some $\bar\delta$, because, for $L>1$, we have
\begin{align*}
c'_{\delta}(L)-c'_{\overline{\delta}}(L-1) &\geq \int_{L-1}^L (c'_{\delta} (x)-c'_{\overline{\delta}}(x))dx\\
&=c_{\delta}(L)-c_{\delta}(L-1)-\left(c_{\overline{\delta}}(L)-c_{\overline{\delta}}(L-1)\right)\geq
-4\bar\delta,
\end{align*}
implying that \eqref{E:boundDynkin3} holds uniformly in $(0,\bar\delta]$.
Therefore, Proposition \ref{P:BOUNDSAB} can be used with the same constant $L$ for every $\delta$, thus every pair $(a^{\ast}_{\epsilon, \delta},b^{\ast}_{\epsilon,\delta})$ that defines the Nash Equilibrium for the $\epsilon$-Dynkin game with associated $c_{\delta}'$, belongs to the set  $[-L,L]$. Naturally, taking a subsquence if necessary, we define  $(a^{\ast}_\epsilon,b^{\ast}_\epsilon)$ as the limit of $(a^{\ast}_{\epsilon,\delta},b^{\ast}_{\epsilon,\delta})$ when $\delta \to 0$.  \\
Firstly, assume $x \in (a^{\ast}_\epsilon,b^{\ast}_\epsilon)$ and for  $\delta $ small enough we can assume $x \in (a^{\ast}_{\epsilon,\delta},b^{\ast}_{\epsilon,\delta})$. On one hand, using that we previously proved the Theorem when $c \in C^2(\mathbb{R})$, we have for every admissible control $(U,D)$, the inequality:

\begin{align*} %\label{E:OptimaldeltaDiscounted}
\lim_{\delta \to 0 }&\E_x \int_{(0,\infty)}e^{-\epsilon s} \big(c_{\delta}(X_s^{a^{\ast}_{\epsilon,\delta},b^{\ast}_{\epsilon,\delta}})ds + q_udU_s^{a^{\ast}_{\epsilon,\delta},b^{\ast}_{\epsilon,\delta}}+ q_ddD_s^{a^{\ast}_{\epsilon,\delta},b^{\ast}_{\epsilon,\delta}} \big)   \nonumber \\
 & \leq \lim_{ \delta \to 0} \E_x\bigg( \int_{(0,\infty)}e^{-\epsilon s} \big(c_{\delta}(X_s^{U,D})ds + q_udU_s+ q_ddD_s \big) +q_u u_0+q_d d_0 \bigg) \nonumber \\
 &=  \E_x\bigg(\int_{(0,\infty)}e^{-\epsilon s} \big(c(X_s^{U,D})ds + q_udU_s+ q_ddD_s \big)+q_u u_0 +q_d d_0  \bigg).
\end{align*}
Again, observe that the inequality holds for every pair of controls $(U,D)$, not only for the ones under the hypothesis of Theorem \ref{T:Verificationdiscounted}.
 Therefore, we need to prove
\begin{multline}\label{E:OptimaldeltaDiscounted}
\lim_{\delta \to 0 }
\E_x \int_{(0,\infty)}e^{-\epsilon s} \big(c(X_s^{a^{\ast}_{\epsilon,\delta},b^{\ast}_{\epsilon,\delta}})ds + q_udU_s^{a^{\ast}_{\epsilon,\delta},b^{\ast}_{\epsilon,\delta}}+ q_ddD_s^{a^{\ast}_{\epsilon,\delta},b^{\ast}_{\epsilon,\delta}} \big)  \\
 =  \E_x \int_{(0,\infty)}e^{-\epsilon s} \big(c(X_s^{a^{\ast}_{\epsilon},b^{\ast}_{\epsilon}}) ds + q_u dU^{a^{\ast}_{\epsilon},b^{\ast}_{\epsilon}}_s+ q_d dD^{a^{\ast}_{\epsilon},b^{\ast}_{\epsilon}}_s \big)  .
\end{multline}
The equality
\[\lim_{\delta \to 0}\E_x \int_{(0,\infty)}e^{-\epsilon s}c(X_s^{a^{\ast}_{\epsilon,\delta},b^{\ast}_{\epsilon,\delta}})ds  =\E_x \int_{(0,\infty)}e^{-\epsilon s} c(X_s^{a^{\ast}_{\epsilon},b^{\ast}_{\epsilon}})ds  \]
is deduced from the fact that the processes $\lbrace X_s^{a^{\ast}_{\epsilon,\delta},b^{\ast}_{\epsilon,\delta}} \rbrace$ and  $\lbrace X_s^{a^{\ast}_{\epsilon},b^{\ast}_{\epsilon}} \rbrace$ are bounded in $[-L,L]$ and \cite[Thm. 2.1]{KRLS}, which implies that for every $A<B, C<D$: 
\begin{equation}\label{Eq:continuitybarrier}
    \vert X^{A,B}_t- X^{C,D}_t \vert \leq \vert X^{A,B}_t- X^{A,D}_t \vert+ \vert X^{A,D}_t- X^{C,D}_t \vert \leq  \vert B-D \vert+ \vert C-A \vert ,
\end{equation}
for all $t\geq 0$. It remains to prove that
\[\lim_{\delta \to 0}\E_x \int_{(0,\infty)}e^{-\epsilon s}   dU^{a^{\ast}_{\epsilon,\delta},b^{\ast}_{\epsilon,\delta}}_s =\E_x \int_{(0,\infty)}e^{-\epsilon s}   dU^{a^{\ast}_{\epsilon},b^{\ast}_{\epsilon}}_s . \]
In the case that the process $X$ has bounded variation, the claim is deduced from Lemma \ref{L:reflectingcontinuity} and Lemma \ref{L:reflectingboundedlimit}. 
In the case of unbounded variation, again we can use can use Lemma \ref{L:reflectingcontinuity} if $ d= \inf_{\delta}(b_{\epsilon,\delta}^{\ast}-a_{\epsilon,\delta}^{\ast})>0$. 
Assume, by contradiction, that $d=0$. 
Using Lemma \ref{L:reflectingunboundedinfinity} we obtain a subsequence $\lbrace\delta_n\rbrace$ such that 
\begin{equation}\label{Eq:limitU}
\lim_{b \to 0}
\E_x \displaystyle{\int_{(0,\infty)}}e^{-\epsilon s}  dD_s^{a^{\ast}_{\epsilon,\delta_n},b^{\ast}_{\epsilon,\delta_n}}   =\infty,  \text{ when } n \to \infty.
\end{equation}
On the other hand, denoting by $G^{\delta}_{\epsilon}$ the $\epsilon$ discounted value function with a cost function $c_{\delta}$ we observe that for every $\delta,\overline{\delta}>0$ the inequality
$\vert \vert G^{\delta}_{\epsilon}-G^{\overline\delta}_{\epsilon}\vert \vert_{\infty} \leq \vert \delta- \overline{\delta} \vert$ holds. 
However, due to the assumption $d=0$ and \eqref{Eq:limitU}  for each $x \in \mathbb{R}$ we have a subsequence $\lbrace  \delta_n \rbrace$ such that 
$\vert G^{\delta_n}_{\epsilon}(x) \vert \to \infty$ what is a contradiction.    \\
Secondly, if $x > b_{\epsilon}^{\ast}$ the inequality \eqref{E:OptimaldeltaDiscounted} becomes 
\begin{align*}
\lim_{\delta \to 0 }&\E_x\bigg(\int_{(0,\infty)}e^{-\epsilon s} \big(c_{\delta}(X_s^{a^{\ast}_{\epsilon,\delta},b^{\ast}_{\epsilon,\delta}})ds + q_udU_s^{a^{\ast}_{\epsilon,\delta},b^{\ast}_{\epsilon,\delta}}+ q_ddD_s^{a^{\ast}_{\epsilon,\delta},b^{\ast}_{\epsilon,\delta}} \big) \\ &+q_d(b^{\ast}_{\epsilon,\delta}-x)   \bigg) \nonumber \\
 & \leq \lim_{ \delta \to 0} \E_x\bigg(\int_{(0,\infty)}e^{-\epsilon s} \big(c_{\delta}(X_s^{U,D})ds + q_udU_s+ q_ddD_s \big) +q_u u_0 +q_d d_0 \bigg) \nonumber \\
 &=  \E_x\bigg(\int_{(0,\infty)}e^{-\epsilon s} \big(c(X_s^{U,D})ds + q_udU_s+ q_ddD_s \big)+q_u u_0+q_d d_0  \bigg),
\end{align*}
and we need to prove 
\begin{multline*}
\lim_{\delta \to 0 }\E_x\bigg(\int_{(0,\infty)}e^{-\epsilon s} \big(c_{\delta}(X_s^{a^{\ast}_{\epsilon,\delta},b^{\ast}_{\epsilon,\delta}})ds + q_udU_s^{a^{\ast}_{\epsilon,\delta},b^{\ast}_{\epsilon,\delta}}+ q_ddD_s^{a^{\ast}_{\epsilon,\delta},b^{\ast}_{\epsilon,\delta}} \big) +q_d(b^{\ast}_{\epsilon,\delta}-x)  \bigg)  \\
 =  \E_x\bigg(\int_{(0,\infty)}e^{-\epsilon s} \big(c(X_s^{a^{\ast}_{\epsilon},b^{\ast}_{\epsilon}})ds + q_udU^{a^{\ast}_{\epsilon},b^{\ast}_{\epsilon}}_s+ q_ddD^{a^{\ast}_{\epsilon},b^{\ast}_{\epsilon}}_s \big) +q_d(b^{\ast}_{\epsilon}-x)\bigg),
\end{multline*}
which is equivalent to prove
\begin{multline}\label{E:OptimaldeltaDiscounted2}
\lim_{\delta \to 0 }\E_{ b^{\ast}_{\epsilon,\delta}}  \int_{(0,\infty)}e^{-\epsilon s} \big(c_{\delta}(X_s^{a^{\ast}_{\epsilon,\delta},b^{\ast}_{\epsilon,\delta}})ds + q_udU_s^{a^{\ast}_{\epsilon,\delta},b^{\ast}_{\epsilon,\delta}}+ q_ddD_s^{a^{\ast}_{\epsilon,\delta},b^{\ast}_{\epsilon,\delta}} \big)    \\
 =  \E_{b^{\ast}_{\epsilon}} \int_{(0,\infty)}e^{-\epsilon s} \big(c(X_s^{a^{\ast}_{\epsilon},b^{\ast}_{\epsilon}})ds + q_udU^{a^{\ast}_{\epsilon},b^{\ast}_{\epsilon}}_s+ q_ddD^{a^{\ast}_{\epsilon},b^{\ast}_{\epsilon}}_s \big) . 
\end{multline}
The proof of $\eqref{E:OptimaldeltaDiscounted2}$ follows the same reasoning as the proof of the previous case with the only precaution that in this case the continuity of $c$ must be used due to the translation of the process. This is because similar analytical properties of the controlled process and reflections in the case where the process starts at the lower barrier can be deduced if the process starts at the upper barrier. The case $x < a^{\ast}_{\epsilon}$ is clearly analogous. \\
Finally, for the case $x \in \lbrace a^{\ast}_{\epsilon},b^{\ast}_{\epsilon} \rbrace$, notice that the function $J_{\epsilon}(x,U^{a^{\ast}_\epsilon,b^{\ast}_\epsilon},D^{a^{\ast}_\epsilon,b^{\ast}_\epsilon}) $
 is equal to  $G_{\epsilon}(x)$  if $x \notin \lbrace a^{\ast}_\epsilon,b^{\ast}_\epsilon \rbrace$. Therefore, from Proposition \ref{P:notdegenerate} we have \newline $ \limsup_{y \to x} J_{\epsilon}(x,U^{a^{\ast}_\epsilon,b^{\ast}_\epsilon},D^{a^{\ast}_\epsilon,b^{\ast}_\epsilon})\leq G_{\epsilon}(x)$ for $x \in \lbrace a^{\ast}_\epsilon,b^{\ast}_\epsilon \rbrace$ and thus, to conclude the proof, we need to show that $$\lim_{x \searrow b^{\ast}_\epsilon}J_{\epsilon}(x,U^{a^{\ast}_\epsilon,b^{\ast}_\epsilon},D^{a^{\ast}_\epsilon,b^{\ast}_\epsilon})= J_{\epsilon}(b^{\ast}_\epsilon,U^{a^{\ast}_\epsilon,b^{\ast}_\epsilon},D^{a^{\ast}_\epsilon,b^{\ast}_\epsilon}) .$$
This claim is obtained from the fact that, if $x \geq b^{\ast}$, then  
 $$J_{\epsilon}(x,U^{a^{\ast}_\epsilon,b^{\ast}_\epsilon},D^{a^{\ast}_\epsilon,b^{\ast}_\epsilon})= q_d(x-b^{\ast}_\epsilon)+ J_{\epsilon}(b^{\ast}_\epsilon,U^{a^{\ast}_\epsilon,b^{\ast}_\epsilon},D^{a^{\ast}_\epsilon,b^{\ast}_\epsilon}) .$$

%in the case $(b^{\ast}-a^{\ast})\neq 0$.    The case $(b^{\ast}-a^{\ast})= 0$, which as  previously discussed case only can happen if the process has bounded variation is trivial (it is a translation to $0$ and then the same process).
 
%using dominated convergence, Theorem 6.3 in \cite{AAGP} and Theorem 2.1 in \cite{KRLS}

\end{proof}
\begin{proof}[Proof of Theorem \ref{T:ErgodicProblemsolution}] 
First, observe that to prove \rm{(i)} it is enough to show the pointwise convergence and \rm{(iii)}. This is because $G_{\epsilon}$ is convex and the third claim implies it is linear outside an interval. Furthermore let us show that it is sufficient to prove \rm{(i)} for twice continuously differentiable functions: for every $\delta >0,\epsilon>0$ denote $G_{\epsilon}^{\delta}(x),G^{\delta}(x)$ the $\epsilon$-discounted value function and ergodic value function respectively with underlying cost $c_{\delta}$. Observe
\[ \vert \epsilon G_{\epsilon}^{\delta}(x)- \epsilon G_{\epsilon}(x) \vert  \leq \epsilon \delta, \ \vert G^{\delta}(x)-G(x) \vert \leq \delta, \ \text{for all } x \in \mathbb{R}. \]
Therefore if \[\limsup_{\epsilon \to 0} \vert \epsilon G_{\epsilon}^{\delta}(x)-G^{\delta}(x) \vert =0, \ \text{for all } \delta>0 ,\]
item $\rm{(i)}$ will hold. Thus, to prove $\rm{(i)}$, we can assume $c \in C^{2}(\mathbb{R})$. \\ 
Take $r >0$ and $(U,D) \in \mathcal{A}$ such that
\[ J(x,U,D) \leq G(x)+r .\]
Using (ii) in Definition \ref{D:costfunction}, we deduce that there is a constant $K>0$ such that
\begin{equation}\label{E:ConvergenceErgodicleftlimit0}
\limsup_{T \to \infty} \frac{1}{T} \E_x  \int_0^T ( \vert X_s^{U,D} \vert +\vert x \vert )  ds <K.
\end{equation}
On the other hand, observe that for each $\epsilon>0$ the function $G_{\epsilon}$ 
satisfies the hypothesis of Theorem \ref{T:VerificationsBarriersOptimal}, thus:
\begin{multline*}
\liminf_{T \to \infty} \frac{\epsilon}{T} \E_x \int_0^T G_{\epsilon}(X^{U,D}_s)ds  \leq G(x)+r  \\ \leq \limsup_{T \to \infty} \frac{\epsilon}{T} \E_x \int_0^T G_{\epsilon}(X^{a^*_{\epsilon},b^*_{\epsilon}}_s) ds  +r, 
\end{multline*} 
 with $(a^*_{\epsilon},b^*_{\epsilon})$ the optimal barriers of the $\epsilon$ Dynkin game defined in Section $2$.  We deduce, using the fact $r$ is arbitrary, it is enough to prove:
\begin{equation}\label{E:ConvergenceErgodicleftlimit}
\liminf_{\epsilon \to 0} \left( \liminf_{T \to \infty} \frac{\epsilon}{T} \E_x \int_0^T G_{\epsilon}(X^{U,D}_s)ds  -\epsilon G_{\epsilon}(x) \right)  \geq 0 ,
 \end{equation}
\begin{equation}\label{E:ConvergenceErgodicrightlimit}
\limsup_{\epsilon \to 0} \left( \limsup_{T \to \infty} \frac{\epsilon}{T} \E_x \int_0^T G_{\epsilon}(X^{a_{\epsilon},b_{\epsilon}}_s)ds  -\epsilon G_{\epsilon}(x) \right) =0 . 
 \end{equation} 
The limit \eqref{E:ConvergenceErgodicrightlimit} is obtained because $X_s^{a,b}$ is bounded and the derivative of $G_{\epsilon}$ is bounded in $[-q_u,q_d]$. 
To prove \eqref{E:ConvergenceErgodicleftlimit}, we use \eqref{E:ConvergenceErgodicleftlimit0}:
\begin{multline*}
\liminf_{\epsilon \to 0}  \liminf_{T \to \infty} \E_x \int_0^T \left( \frac{\displaystyle \epsilon G_{\epsilon}(X^{U,D}_s) -\epsilon G_{\epsilon}(x)}{T} \right)  ds    \\
\geq \liminf_{\epsilon \to 0} \liminf_{T \to \infty} \E_x  \epsilon \int_0^T \left( \frac{\displaystyle  (-q_d-q_u)\left( \vert (X^{U,D}_s)\vert + \vert x\vert \right)}{T} \right) ds      \\
\geq \liminf_{\epsilon \to 0} \liminf_{T \to \infty}   \epsilon \left(-q_d-q_u \right)K =0 .
\end{multline*}
We conclude that the pointwise convergence holds because $r$ is arbitrary. The claim \rm{(ii)} follows the fact $ \epsilon \vert G_{\epsilon}^{\delta}(x)-G_{\epsilon}^{\delta}(y) \vert \leq (q_d+q_u) \epsilon \vert x -y \vert  $ . \\
To prove \rm{(iii)} first assume $c \in C^2( \mathbb{R})$. 
We define $a^*$ and $b^*$ as the limit when $\epsilon \to 0$ of, respectively,
$\lbrace a^*_{\epsilon}\rbrace_{\epsilon>0}$ and $\lbrace b^*_{\epsilon}\rbrace_{\epsilon>0}$,
taking a sequence if necessary.
Using that the pointwise convergence in \rm{(i)} holds and Theorem \ref{T:VerificationsBarriersOptimal}, we deduce
\begin{equation*}
 \limsup_{\epsilon \to 0}  J(x,U^{a^{\ast}_{\epsilon},b^{\ast}_{\epsilon}},D^{a^{\ast}_{\epsilon},b^{\ast}_{\epsilon}}) = G(x). \end{equation*}
Thus it is enough to prove
\begin{equation*}
%\label{E:convergencecointiuityepsilon}
 \lim_{\epsilon \to 0} J(x,U^{a^{\ast}_{\epsilon},b^{\ast}_{\epsilon}},D^{a^{\ast}_{\epsilon},b^{\ast}_{\epsilon} }) =  J(x,U^{a^{\ast},b^{\ast}},D^{a^{\ast},b^{\ast}}) . \end{equation*}
 Again (taking a sequence if necessary), 
 we can assume $ d_{\epsilon}\colon = (b^{\ast}_{\epsilon}- a^{\ast}_{\epsilon}) $ is monotonous. From \eqref{Eq:continuitybarrier}, and Lemmas   \ref{L:reflectingcontinuity}, \ref{L:reflectingunboundedinfinity} and \ref{L:reflectingboundedlimit}, we conclude the claim in a similar way as in the previous Theorem. \\
 To conclude the proof when the cost function is not twice continuously differentiable, 
 we use that the optimal barriers for $(a^{\ast}_{\delta},b^{\ast}_{\delta})$ belong to a compact set and use \eqref{Eq:continuitybarrier}, and the results of the previous subsection.
\end{proof}
%%%%%%%%%%%%%%%%%%%%%%%%%%%%%%%%%%%%%%%%%%%%%%%%%%%%%%%%%%%%%%%%%%%%%%%%%%%%%%%%%%%%%%%%%%%%%%%%%%%%

\section{Examples }\label{S:Examples} 
In this section we use the results in \cite{AAGP} to describe the ergodic cost function $J(x,(U,D))$ of Definition \ref{D:ergodicvaluefunction} as the solution of a two sided free boundary control problem and then provide examples with explicit solutions.
\subsection{Introduction}
In  \cite[Prop. 5.1]{AAGP}, the authors prove for a L\'evy process with characteristic triplet $(\mu,\nu^2, \Pi)$ that if $\P_s$ is the distribution of $X_s^{a,b}$ then 
$\| \P_s (x, \cdot ) - \pi^{a,b}(\cdot) \|$ 
converges to zero in the norm of the total variation, where
\begin{equation}\label{E:stationary}
\pi^{a,b}[x,b]=\P(X_{\eta_{[x-a-b,x-a)^c}}\geq x-a ) 
\end{equation}
is the stationary measure of the process $X^{a,b}$ and $\eta_{[x-a-b,a-a)^c}$ denotes the first entry to the set $ [x-a-b,a-a)^c$.
Furthermore, in Theorem 1.1 of the same article, 
when $\mu=\E X(1)<\infty$,
the following relationship is established:
\begin{equation*}
\E_\pi D^{a,b}_1
 =\frac{1}{b-a} \left( 2 \mu\E_{\pi} X^{a,b}_1+\nu^2+ \int_0^{b-a}\pi^{0,b-a}(dx)\int_{-\infty}^{\infty}\varphi(x,y,b) \Pi(dy)\right),  
\end{equation*}
with
\[ 
\varphi(x,y,b)=
\begin{cases}
-(x^2+2xy), & \mbox{if $y \leq -x$},\\
        y^2, & \mbox{if $-x <y<b-x $},\\
        2y(b-x)-(b-x)^2, & \mbox{if $y \geq b-x$}.
\end{cases} 
\] 
We know  by stationarity, that 
$$
\lim_{T\to\infty}\frac1T\E_x U^{a,b}_T =\E_\pi U^{a,b}_1,\quad \lim_{T\to\infty}\frac1T\E_x D^{a,b}_T=\E_\pi D^{a,b}_1,
$$
In consequence, from \eqref{D:controlledequation}, we obtain
$$
\E_\pi D_1^{a,b}=\mu+\E_\pi U_1^{a,b}
$$
giving the following result.
\begin{lemma}\label{L:ergodiccost} 
Under the assumptions given in the introduction, for $a<x<b$, and $d=b-a$, the
ergodic cost function of Definition \ref{D:ergodicvaluefunction} satisfies
\begin{align}
J(x,(U^{a,b},D^{a,b}))
%&=\limsup_{T \rightarrow \infty} \frac{1}{T} \E_x\left( \int_0^T c(X_s^{a,b}) ds +q_u dU_s+q_d dD_s \right)\\
&=   \int_{[a,b]} c(u)  \pi^{a,b}(du)  +\E_\pi(q_dD^{a,b}_1+q_uU^{a,b}_1)\nonumber \\
&=  \int_{[0,d]}c(u+a) \pi^{0,d}(du) +q\E_\pi D^{0,d}_1 -\mu q_u.\label{E:secondline}
\end{align}
\end{lemma}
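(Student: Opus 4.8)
The plan is to derive the two displayed expressions for $J(x,(U^{a,b},D^{a,b}))$ directly from the ergodic definition, using the stationarity facts assembled just above the statement. The starting point is Definition \ref{D:ergodicvaluefunction}: since the limsup there becomes a genuine limit under a stationary regime, I would write
\[
J(x,(U^{a,b},D^{a,b})) = \lim_{T\to\infty}\frac1T\E_x\!\left(\int_{(0,T]}c(X^{a,b}_s)\,ds + q_uU^{a,b}_T + q_dD^{a,b}_T\right).
\]
For the integral term, I would invoke the total-variation convergence of $\P_s(x,\cdot)$ to the stationary law $\pi^{a,b}$ established in \cite[Prop. 5.1]{AAGP}: because $c$ is bounded on the compact $[a,b]$ (where $X^{a,b}$ lives), the Cesàro average $\frac1T\int_0^T\E_xc(X^{a,b}_s)\,ds$ converges to $\int_{[a,b]}c(u)\,\pi^{a,b}(du)$. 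For the two boundary terms I would use the stationarity identities $\lim_{T\to\infty}\frac1T\E_xU^{a,b}_T=\E_\pi U^{a,b}_1$ and the analogous one for $D^{a,b}$, already recorded in the excerpt. Assembling these three limits gives the first line of \eqref{E:secondline}.

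The second line is then obtained by two reductions. First, a spatial shift: the process $X^{a,b}$ started from its stationary law has the same dynamics as $X^{0,d}$ shifted by $a$, with $d=b-a$, so $\pi^{a,b}(du)=\pi^{0,d}(du-a)$ and the integral becomes $\int_{[0,d]}c(u+a)\,\pi^{0,d}(du)$; likewise the reflection amounts $U^{a,b},D^{a,b}$ have the same stationary rates as $U^{0,d},D^{0,d}$ by translation invariance of the Lévy dynamics. Second, I would eliminate $U$ in favor of $D$ using the balance relation $\E_\pi D_1^{a,b}=\mu+\E_\pi U_1^{a,b}$ derived from \eqref{D:controlledequation} immediately above. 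Substituting $q_d\E_\pi D_1+q_u\E_\pi U_1 = q_d\E_\pi D_1 + q_u(\E_\pi D_1-\mu) = q\,\E_\pi D^{0,d}_1 - \mu q_u$, with $q=q_u+q_d$, yields exactly the claimed second line.

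The main obstacle I anticipate is justifying that the Cesàro limits are legitimate and that the $\limsup$ collapses to a limit. Specifically, converting total-variation convergence of the one-dimensional marginals into convergence of the time-averaged integral requires controlling the integrand uniformly, which is available here because $X^{a,b}_s\in[a,b]$ forces $c(X^{a,b}_s)$ to stay bounded; the boundary terms need the finite-mean bounds from \cite[Thm. 6.3]{AAGP} and the consequence (Corollary 6.6) already cited in Proposition \ref{P:notdegenerate} to ensure $\E_xU^{a,b}_T$ and $\E_xD^{a,b}_T$ grow linearly with the stationary rate. Once these integrability and convergence points are secured, the remaining algebra—the spatial shift and the $U$-to-$D$ substitution—is routine.
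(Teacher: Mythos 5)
Your proposal is correct and follows essentially the same route as the paper: both rest on the total-variation convergence to the stationary law $\pi^{a,b}$ from \cite[Prop. 5.1]{AAGP}, the stationarity limits $\lim_{T\to\infty}\frac1T\E_x U^{a,b}_T=\E_\pi U^{a,b}_1$ and $\lim_{T\to\infty}\frac1T\E_x D^{a,b}_T=\E_\pi D^{a,b}_1$, the translation reduction from $(a,b)$ to $(0,d)$, and the balance identity $\E_\pi D_1^{a,b}=\mu+\E_\pi U_1^{a,b}$ deduced from \eqref{D:controlledequation} to trade $U$ for $D$. Your added remarks on why the Ces\`aro limits are legitimate (boundedness of $c$ on $[a,b]$ and the finite-mean bounds from \cite{AAGP}) only make explicit what the paper leaves implicit.
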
 
In \eqref{E:secondline}, the lower point $a$ only appears in the integral in the cost, 
and the second variable $d=b-a$ is the distance within the barriers.  Condition $a^*\leq 0\leq b^*$ reads now
$0\leq -a^*\leq d^*$.
In what respects to the discounted problem, 
it is reduced to finding a couple $a^*\leq 0\leq b^*$
that minimizes
\begin{equation*}
J_\epsilon(x,a,b)=\E \int_{(0,\infty)}e^{-\epsilon s}\left(c(X_s^{a,b})ds + q_udU^{a,b}_s +q_d dD^{a,b}_s\right).  
\end{equation*}
Notice that the process starts at zero because as seen in Section \ref{S:DynkinAdjoint} the optimal reflecting barriers do not depend on the starting point.
%%%%%%%%%%%%%%%%%%%%%%%%%%%%%%%%%%%%%%%%%%%%%%%%%%%%%%%%%%%%%%%
%%%%%%%%%%%%%%%%%%%%%%%%%%%%%%%%%%%%%%%%%%%%%%%%%%%%%%%%%%%%%%%
%%%%%%%%%%%%%%%%%%%%%%%%%%%%%%%%%%%%%%%%%%%%%%%%%%%%%%%%%%%%%%%
\subsection{Ergodic problem for a Compound Poisson Process}\label{ExamplePoisson}
In this example, the cost function is $c(x)= \vert x \vert$.  
In this case $c_\delta$ can be taken as
\[
c_{\delta}(x)=2 \delta \log(1+ e^{\delta^{-1}x})-x -2\delta \log 2.
\]
We consider a compound Poisson process $X=\{X_t\}_{t\geq 0}$ with double-sided exponential jumps, given by
\begin{equation*}
%\label{eq:lp}
X_t=x+\sum_{i=1}^{N^{(1)}_t}Y^{(1)}_i-\sum_{i=1}^{N^{(2)}_t}Y^{(2)}_i,
\end{equation*}
where $\{N^{(1)}_t\}_{t\geq 0}$ and 
$\{N^{(2)}_t\}_{t\geq 0}$ 
are two Poisson processes with respective positive intensities $\lambda_1,\lambda_2$;
$\{Y^{(1)}_i\}_{i\geq 1}$ and $\{Y^{(2)}_i\}_{i\geq 1}$ 
are two sequences of independent exponentially distributed random variables with respective positive parameters 
$\alpha_1,\alpha_2$.
The four processes are independent.
Consequently
$$
\phi(z)=\lambda_1{z\over\alpha_1-z}-\lambda_2{z\over\alpha_2+z}.
$$
For definiteness we assume 
$
\E X_1={\lambda_1/\alpha_1}-{\lambda_2/\alpha_2} <0.
$
Consider the Lundberg  constant ${{\rho}}$, i.e the positive root of $\phi(z)=0$, given by
$$
{{\rho}}={\lambda_2\alpha_1-\lambda_1\alpha_2\over\lambda_1+\lambda_2}.
$$
Observe that $0<{{\rho}}<\alpha_1$. To compute \eqref{E:secondline} we obtain the stationary distribution 
based on \eqref{E:stationary} and the fact that $\{\exp({\rho} X_t)\}_{t\geq 0}$ is a martingale:
$$
\aligned
\pi^{0,d}(dx)&=
{{\rho}/\lambda_1\over\alpha_1/\lambda_1-\alpha_2e^{{-\rho}d}/\lambda_2}
\delta_0(dx)
+
{(\alpha_1+\alpha_2)/(\lambda_1+\lambda_2)\over\alpha_1/\lambda_1-\alpha_2e^{{-\rho}d}/\lambda_2}\rho e^{-\rho x}dx\\
&\quad+
{{\rho}/\lambda_2\over\alpha_1e^{{\rho}d}/\lambda_1-\alpha_2/\lambda_2 }
\delta_d(dx),\\
\endaligned
$$
where $\delta_a(dx)$ is the Dirac measure at $a$.
We minimize \eqref{E:secondline} from Lemma \ref{L:ergodiccost}. 
After computing $\int_{[0,d]}\vert a+u \vert \pi^{0,d}(du)$, we deduce that the cost function to be minimized, 
%that depends only on the reflecting controls at $a \leq 0 \leq b$, 
after the change $d=b-a$, is 
\begin{multline*}
J(a,d)= 
   \Bigg(  \frac{-a\rho}{\lambda_1}+e^{-\rho d}(d+a)\frac{\rho}{\lambda_2}   
+\frac{q\rho\lambda_1}{\alpha_1}  \left(e^{-\rho d} \left(\lambda_1^{-1} + \lambda_2^{-1} \right)   \right) \\
\qquad +\left( \frac{\alpha_1+\alpha_2}{\lambda_1+\lambda_2}\right)\left(e^{-\rho d}\left(- a - d  -\rho^{-1} \right) -a  +2\rho^{-1} e^{a \rho}-\rho^{-1} \right)\Bigg) \\
\times \left(\frac{\alpha_1}{\lambda_1}-\frac{\alpha_2e^{-{\rho}d}}{\lambda_2}\right)^{-1}.
\end{multline*}
%Upon inspection, the derivative with respect to $d$ of this objective function vanishes if and only if
%\begin{align}\label{Eq:CompoundPoissonfinalresult} 
%0=& -e^{-\rho d}(\alpha_2/\lambda_2)\left[ 
%  (\alpha_1+\alpha_2)/(\lambda_1+\lambda_2)
%- \rho/\lambda_2   \right] \nonumber\\
%&+d (\rho \alpha_1/ \lambda_1)  \left[  (\alpha_1+\alpha_2)/(\lambda_1+\lambda_2)       -\rho /\lambda_2   \right] \nonumber \\
%&+2a \rho (\alpha_1 \alpha_2) /(\lambda_1 \lambda_2)\nonumber \\
%&- 2e^{\rho a} \alpha_2  (\alpha_1+\alpha_2)/(\lambda_2\lambda_1+\lambda_2^2) \nonumber \\
%& + \left[(\alpha_1+\alpha_2)/(\lambda_1+\lambda_2)\right](\alpha_2/\lambda_2)+\rho\alpha_1/(\lambda_1 \lambda_2) -q \rho^2 (1/\lambda_2 + 1/\lambda_1).
%\end{align}
%This equation has at most one solution for $d \geq 0$. 
Upon inspection, we conclude that the candidates for the optimal barriers are  
$a=0, d=0 $ or $d$ minimizing the expression above with $a$ equal to $0 , -d$ or the value $$a=-\rho^{-1}\log\left( 2(\alpha_1+\alpha_2) \right)+ \rho^{-1} \log \left(e^{-\rho d}(\alpha_2 +\lambda_1\alpha_2/\lambda_2)+\alpha_1+\lambda_2\alpha_1/\lambda_1 \right) .$$
To illustrate, we consider numeric examples that show that the four possible cases can happen:
\begin{align*}
    &\text{If } \lambda_1=1, \ \lambda_2=2,\alpha_1=2,\ \alpha_2=1,\ q=3, \text{ then } a^{\ast}=0,\  d^{\ast}\sim4.005 .  \\
    &\text{If } \lambda_1=1, \ \lambda_2=2,\ \alpha_1=2,\alpha_2=1, \ q=0.1, \text{ then } a^{\ast}=0, \ d^{\ast}=0. \\
    &\text{If } \lambda_1=1, \ \lambda_2=1,\ \alpha_1=4,\alpha_2=1, \ q=5, \text{ then } a^{\ast} \sim -0.272, \ d^{\ast} \sim 0.966.
  \end{align*}
The last case, giving $a^{\ast}=-d^{\ast}\neq 0$ is very sensitive to the parameter variations
  \begin{align*}
     &\text{If } \lambda_1=9.999985 \times 10^5, \ \lambda_2=1, \ \alpha_1=1 \times 10^6 ,\ \alpha_2=5\times 10^{-7},\\ & 
     q=0.5 , \text{ then } -a^{\ast}=d^{\ast}\sim 0.0202 .
   \end{align*}
%It is not coincidence that the parameters in the last example are the most extreme. 
In certain sense, the case where the optimum is reached at zero and a negative barrier can be seen as pathological, because the expected value is negative yet it is optimal to keep the process in the negative line. \\ 
%%%%%%%%%%%%%%%%%%%%%%%%%%%%%%%%%%%%%%%%%%%%%%%%%%%%%%%%%%%
%%%%%%%%%%%%%%%%%%%%%%%%%%%%%%%%%%%%%%%%%%%%%%%%%%%%%%%%%%%
%%%%%%%%%%%%%%%%%%%%%%%%%%%%%%%%%%%%%%%%%%%%%%%%%%%%%%%%%%%
\subsection{Discounted problem for a jump-diffusion process}\label{ExamplePoissonGaussian}
In this case, $\epsilon>0$ is fixed, $c(x)=x^2/2$. 
We assume that
the L\'evy process process $\{X_t\}_{t\geq 0}$ has non-zero mean defined by
%We will solve the adjoin problem \eqref{E:thebigidea}. We are under the notations of  \eqref{E:CompoundPoissonProcesswithexponentialjumps} 
\begin{equation*}
%\label{E:Processcompoundandbrownian}
X_t=x+\nu W_t+\sum_{i=1}^{N_t^{(1)}}Y^{(1)}_i  - \sum_{i=1}^{N_t^{(2)}}Y^{(2)}_i,
\end{equation*} 
with $\{W_t\}_{t\geq 0} $ a Brownian motion, $\nu>0$, and
$\{N^{(1)}_t\}_{t\geq 0}$,$\{N^{(2)}_t\}_{t\geq 0}$,$\lbrace Y^{(1)}_i \rbrace_{i\geq 1}$,
$\lbrace Y^{(2)}_i \rbrace_{i\geq 1}$ 
as in the previous example, the five processes are independent (for more information about the first exit time of an interval for this process see \cite{CCW}). Therefore

$$
\phi(z)=\frac{\nu^2 }{2}z^2+ \lambda_1{z\over\alpha_1-z}-\lambda_2{z\over\alpha_2+z}.
$$
Now we find a pair $a^{\ast} \leq b^{\ast}$ such that 
\[
G_{\epsilon}(x)=J_{\epsilon}(x,(U^{a^{\ast},b^{\ast}},D^{a^{\ast},b^{\ast}})). 
\]
Taking $\tau (a), \ \sigma(b)$ as in \eqref{D:tauandsigma}

\begin{multline*}
M_x(a,b)=\E_x\bigg(
\int_0^{\tau(a)\wedge\sigma(b)}
e^{-\epsilon s}X_s ds \\
-q_u e^{-\epsilon\tau(a)}\mathbf{1}_{\{\tau(a)<\sigma(b)\}}  
+q_d e^{-\epsilon\sigma(b)}\mathbf{1}_{\{\sigma(b)<\tau(a)\}} 
\bigg),
\end{multline*}
and applying Theorem \ref{T:DISCOUNTEDPROBLEMSOLUTION}, 
to solve the discounted control problem,
we need to find 
  $a^{\ast}<0<b^{\ast}$ such that
\begin{equation*}
%\label{E:ExplicitdiscountedGaussianWithJumps}
M(a^*,b^*)=\sup_{a<0}\,\inf_{b>0}M(a,b),
\end{equation*}
where we assume $X_0=0$ because $(a^{\ast},b^{\ast})$ do not depend on the starting point. 
%F
Using \cite[Thm. 3.1]{CCW}, we deduce:
\begin{align*}
&M(a,b)=\\
&(e^{-\rho_3 b}, e^{-\rho_4 b},e^{-\rho_1 a}, e^{-\rho_2 a})\textbf{N}^{-1}_{b-a}\begin{pmatrix}
-\epsilon^{-2}(\lambda_1/\alpha_1 -\lambda_2/\alpha_2)-\epsilon^{-1}b +q_d \\
-\alpha_1^{-1}(\epsilon^{-2}(\lambda_1/\alpha_1 -\lambda_2/\alpha_2)+\epsilon^{-1} \alpha_1^{-1}+\epsilon^{-1}b -q_d)\\
-\epsilon^{-2}(\lambda_1/\alpha_1 -\lambda_2/\alpha_2)-\epsilon^{-1}a -q_u \\
-\alpha_2^{-1}(\epsilon^{-2}(\lambda_1/\alpha_1 -\lambda_2/\alpha_2)-\epsilon^{-1} \alpha_2^{-1}+\epsilon^{-1}a +q_u)
\end{pmatrix} \\&+ \frac{\displaystyle \lambda_1/ \alpha_1 - \lambda_2/\alpha_2}{\epsilon^2},
\end{align*}
with
\begin{equation*}
\textbf{N}_{b-a} = 
 \begin{pmatrix}
1 & 1 & e^{- \rho_1 (a-b)} & e^{-\rho_2 (a-b)} \\
\frac{\displaystyle 1 }{\displaystyle \alpha_1 -\rho_3} & \frac{\displaystyle 1}{\displaystyle \alpha_1 -\rho_4} & \frac{\displaystyle e^{-\rho_1 (a-b)}}{\displaystyle \alpha_1 -\rho_1} & \frac{\displaystyle e^{-\rho_2 (a-b)}}{\displaystyle \alpha_1- \rho_2} \\
e^{\rho_3 (a-b)} & e^{\rho_4 (a-b)} & 1 & 1 \\
\frac{\displaystyle e^{\rho_3 (a-b)}}{\displaystyle \alpha_2+\rho_3} & \frac{\displaystyle e^{\rho_4 (a-b)} }{\displaystyle \alpha_2+\rho_4} & \frac{\displaystyle 1}{\displaystyle \alpha_2+ \rho_1}   & \frac{\displaystyle 1}{\displaystyle \alpha_2 + \rho_2}
\end{pmatrix},
\end{equation*}
where ${\rho}_i\ (i=1,2,3,4)$ are the non-zero roots of the equation $\phi(z)=\epsilon$, that satisfy  $\rho_2<-\alpha_2<\rho_1<0<\rho_3< \alpha_1 < \rho_4. $ This matrix is always non-singular (see \cite[Prop. 3.1]{CCW}). \\
For the parameters:
$$q_d=1, \
q_u=1, \
\alpha_1=2, \
\alpha_2=1, \
\lambda_2=1, \
\lambda_1= 1, \
\epsilon=1, \
\nu=\sqrt{2}, $$
the solutions of the equation $\phi(z)-\epsilon=0$ are:

$$\rho_1\sim -0.489, \  \rho_2 \sim -1.898, \ \rho_3\sim 0.849,  \ \rho_4 \sim 2.537 ,$$
and the equilibrium point is  
$ a^{\ast}\sim -2.017      , b^{\ast} \sim  2.311   $.
 %%%%%%%%%%%%%%%%%%%%%%%%%%%%%%%%%%%%%%%%%%%%%%%%%%%%%%%%%%%%%%%%%%%%%%%%%%%%%%%%%%%%%%%%%%%%%%%%%%
%%%%%%%%%%%%%%%%%%%%%%%%%%%%%%%%%%%%%%%%%%%%%%%%%%%%%%%%%%%%%%%%%%%%%%%%%%%%%%%%%%%%%%%%%%%%%%%%%%
%%%%%%%%%%%%%%%%%%%%%%%%%%%%%%%%%%%%%%%%%%%%%%%%%%%%%%%%%%%%%%%%%%%%%%%%%%%%%%%%%%%%%%%%%%%%%%%%%%
\subsection{Ergodic problem for a stable processes}
In this example $c(x)=x^2$, and the L\'evy process $X$ is strictly $\alpha-$stable  with parameter $\alpha \in (1,2), 0<c^+<c^-$. 
In other words $X$ is a pure jump process with finite mean and triplet $(0,0,\Pi)$, with jump measure
\[ 
\Pi(dx)= 
\begin{cases}
c^{+} x^{-\alpha -1}dx, & x>0, \\
c^{-} \vert x \vert^{-\alpha -1} dx ,& x<0.
\end{cases}
\]
The characteristic exponent  is
\[
\phi(i \theta) \colon =\vert \theta \vert^{\alpha}(c^+ + c^-) \Big(1- i \text{sgn}(\theta)\tan \big(\frac{\pi \alpha}{2} \big) \frac{c^+ -c^-}{c^+ + c^-} \Big) .
\]
(see \cite[p. 10]{KRI}). 
The two sided exit problem can be solved using the scaling property, that is 
$X_t\overset{d}{=} k^{\frac{-1}{\alpha}}X_{tk}$ for every $t>0, k>0$. 
The stationary measure has Beta density $\pi^{0,d}(x)$ on $[0,d]$ with parameters
$(\alpha \rho, \alpha (1-\rho))$, i.e.
\begin{equation*}%\label{E:StableProcessPI}
\pi^{0,d}(x)=\frac1{d \beta(\alpha \rho, \alpha (1-\rho))} \left(1-\frac{x}{d} \right)^{\alpha \rho -1} 
\left(\frac{x}{d} \right)^{\alpha (1-\rho)-1}, 
\end{equation*}
where $\beta(u,v)$
%$$
%\beta(u,v)=\int_0^1t^{u-1}(1-t)^{v-1}\,dt,%={\Gamma(u)\Gamma(v)\over\Gamma(u+v)},
%$$ 
is the Beta function and 
\[
\rho= \frac{1}{2}+ (\pi \alpha)^{-1} \arctan \bigg( \Big(\frac{c^+ - c^-}{c^+ + c^-} \Big)  \tan(\alpha \pi/2) \bigg). 
\]
From this we can compute $\E D^{0,d}_1$ (see \cite[p. 114]{AAGP}) and deduce, by differentiation $$  a=- \int_0^d x \pi^{0,d}(x)dx =-d\rho.$$
Then, the ergodic problem has an objective function depending only on $d$, given by
\begin{equation*}
J(d)  =\int_0^d (x-d\rho)^2 \pi^d(x)\,dx +q \E(D^{0,d}_1) 
        =d^2{\rho(1-\rho)\over\alpha+1}+\frac{1}{d^{\alpha-1}}q\E(D^{0,1}_1).    
\end{equation*}
By differentiation:
$$
d^*=\left(\frac{(\alpha^2-1)q \E(D^{0,1}_1)}{2\rho(1-\rho)}\right)^{1/(\alpha+1)}.
$$
For example when $q=1, \ c^-=2, \ c^+=1, \alpha=1.5$ the ergodic value is reached at 
$d^{\ast}  \sim 2.850$ and 
$a^{\ast}=-1.230$.

\end{document}